\documentclass[12pt, a4paper]{amsart}
\usepackage{amscd,amssymb,eucal,eufrak,mathrsfs,amsmath}
\usepackage[hidelinks]{hyperref} 

\usepackage{stmaryrd,enumitem,mathtools} 

\input xy
\xyoption{all}
\usepackage{epsfig}

\oddsidemargin 0.1875 in \evensidemargin 0.1875in
\textwidth 6in 
\textheight 230mm \voffset=-4mm


\newtheorem{thm}{Theorem}[section]
\newtheorem*{thm*}{Theorem}
\newtheorem{cor}[thm]{Corollary}
\newtheorem{lem}[thm]{Lemma}
\newtheorem{prop}[thm]{Proposition}

\theoremstyle{definition}
\newtheorem{defin}[thm]{Definition}

\theoremstyle{remark}
\newtheorem{remark}[thm]{Remark}
\newtheorem{remarks}[thm]{Remarks}
\newtheorem{question}[thm]{Question}                                     
\newtheorem{example}[thm]{Example}
\newtheorem{examples}[thm]{Examples}
\numberwithin{equation}{section}
                                                                      
\newcommand{\nt}{\noindent}

\def\eps{{\varepsilon}}

\def\U{\mathcal{U}}

\def\eps{{\varepsilon}}

\newcommand{\g}{\gamma}

\newcommand{\A}{\mathcal{A}}

\newcommand{\sk}{\vskip 0.2cm}

\newcommand{\ov}{\overline}

\newcommand{\ben}{\begin{enumerate}}

\newcommand{\een}{\end{enumerate}}
\newcommand{\bit}{\begin{itemize}}

\newcommand{\eit}{\end{itemize}}

\def\C {{\mathbb C}}
\def\R {{\mathbb R}}
\def\N {{\mathbb N}}

\def\mfG {{\mathfrak G}}

\def\RUC{\operatorname{RUC}}
\def\LUC{\operatorname{LUC}}
\def\UC{\operatorname{UC}}
\def\Def{\operatorname{Def}}
\def\Defqf{\operatorname{Def_{qf}}}
\def\Gro{\operatorname{Gro}}

\def\tp{\operatorname{tp}}


\newcommand{\actson}{\curvearrowright}

\def\diam{{\mathrm{diam}}}
\def\Iso{\operatorname{Iso}}
\def\Aut{\operatorname{Aut}}

\def\Homeo{\operatorname{Homeo}}

\def\nbd {neighborhood }


\begin{document}

\title[Maximal equivariant compactification of metric structures]{Maximal equivariant compactification of the Urysohn spaces and other metric structures}

\author[Tom\'as Ibarluc{\'\i}a]{Tom\'as Ibarluc{\'\i}a}
\address{
  Universit{\'e} de Paris\\
  CNRS, Institut de Math{\'e}matiques de Jussieu--Paris Rive Gauche\\
  F-75013 Paris\\
  France}

\urladdr{https://webusers.imj-prg.fr/~tomas.ibarlucia}

\author[Michael Megrelishvili]{Michael Megrelishvili}
\address{Department of Mathematics,
	Bar-Ilan University, 52900 Ramat-Gan, Israel}
\email{megereli@math.biu.ac.il}
\urladdr{http://www.math.biu.ac.il/~megereli}


\keywords{Effros' Theorem, Equivariant compactification, Gromov compactification, Gurarij space, Kat\v{e}tov functions, separably categorical structures, Urysohn space}

\thanks{{\it AMS classification:}
Primary 37B05, Secondary 22F50, 22F30}
\thanks{The first author's research was partially supported by the ANR contract AGRUME (ANR-17-CE40-0026). The second author's research was supported by a grant of the Israel Science Foundation 1194/19}

\begin{abstract} 
	We study isometric $G$-spaces and the question of when their maximal equivariant compactification is the \textit{Gromov compactification} (meaning that it coincides with the compactification generated by the distance functions to points). Answering questions of Pestov, we show that this is the case for the Urysohn sphere and related spaces, but not for the unit sphere of the Gurarij space.

	We show that the maximal equivariant compactification of a separably categorical metric structure $M$ under the action of its automorphism group can be identified with the space $S_1(M)$ of 1-types over $M$, and is in particular metrizable. This provides a unified understanding of the previous and other examples. In particular, the maximal equivariant compactifications of the spheres of the Gurarij space and of the $L^p$ spaces are metrizable.

	We also prove a uniform version of Effros' Theorem for isometric actions of Roelcke precompact Polish groups.
\end{abstract}

\maketitle

\setcounter{tocdepth}{1}
\tableofcontents
 
\section{Introduction}

\subsection*{Background}
Before explaining the aim of the paper let us recall that a \emph{$G$-space} of a topological group $G$ is a topological space $X$ together with a continuous left action $G\actson X$. We will assume that the phase space $X$ is always Tychonoff, i.e., that $X$ can be topologically embedded into a compact Hausdorff space. An \emph{equivariant compactification} of a $G$-space $X$ is given by a compact Hausdorff $G$-space $K$ and a continuous $G$-equivariant map $\nu\colon X\to K$ with a dense image. The map $\nu$ need not be a topological embedding (or even injective); if it is a topological embedding, the compactification is said \emph{proper}.

For locally compact groups all $G$-spaces admit proper compactifications, as was established by de Vries \cite{Vr-loccom78}. However, this fails in general, as first shown by the second author \cite{Me-Ex88} (resolving a question of de Vries \cite{Vr-can75}), who built a Polish fan $X$ together with a Polish group $G\leq\Homeo(X)$ such that the system $G\actson X$ has no injective $G$-compactifications. Recently, and answering an old question of Smirnov, Pestov \cite{Pest-Smirnov} exhibited an extreme counterexample by constructing a countable metrizable group $G$ and a countable metrizable non-trivial $G$-space $X$ for which every equivariant compactification is \emph{trivial}, i.e., a singleton. The example is obtained by a clever iteration of the construction of \cite{Me-Ex88}. Pestov's paper ends with a discussion of several open questions. In this paper we address some of these and related questions (see Questions \ref{q:3} and \ref{q:Pest} below), concerning important examples of \emph{isometric} $G$-spaces with greatest $G$-compactifications which are \textit{small} and admit tractable descriptions.

We recall as well that if $X$ is a $G$-space, a continuous bounded function $f\colon X\to \R$ is \emph{right uniformly continuous (RUC)} if for every
$\varepsilon > 0$ there exists a neighborhood $V$ of the identity $e\in G$ such that $\sup_{x \in X} |f(vx)-f(x) | < \varepsilon$ for every
$v \in V$. The set $\RUC_G(X)$ of all right uniformly continuous functions on $X$ is a closed $G$-invariant subalgebra of $\operatorname{CB}(X)$---the algebra of real-valued, continuous, bounded functions on $X$, with the supremum norm, on which $G$ acts by the formula $gf(x)=f(g^{-1}x)$. There is a natural bijective correspondence between the equivariant compactifications of $X$ (up to equivalence) and the closed $G$-invariant subalgebras of $\RUC_G(X)$ that are \textit{unital} (containing the constants). Below by (sub)algebra we always mean a \textit{unital} subalgebra of $\operatorname{CB}(X)$. The compactification corresponding to a subalgebra $\A$ is given by the Stone--Gelfand space of maximal ideals of $\A$.

 In particular, the algebra $\RUC_G(X)$ corresponds to the \emph{greatest} (or \emph{maximal}) \emph{equivariant compactification} of $X$, which we denote by $\beta_G\colon X\to \beta_G X$, and which is characterized by the property that any other equivariant compactification of $X$ factors through $\beta_G$. Pestov's construction in \cite{Pest-Smirnov} gives thus a $G$-space $X$ for which $\beta_G X$ is a singleton, or, equivalently, for which $\RUC_G(X)$ is as small as possible, namely the algebra of constant functions on $X$.

As indicated before, we will study $G$-spaces in which the phase space $X$ is a metric space and the action $G\actson X$ is by \emph{isometries}. In this case, there is always a natural family of non-trivial RUC functions. Indeed, if we assume moreover that the metric $d$ on $X$ is bounded, every element $z\in X$ induces a bounded, continuous function
$$f_z\colon X\to \R,\ x\mapsto d(x,z)$$
that is right uniformly continuous. Let $\Gro(X)$ denote the closed algebra generated by the functions of this form (plus the constants). Then, as is easy to check, $\Gro(X)$ is a $G$-invariant subalgebra of $\RUC_G(X)$. Following \cite{Akin,Me-opit07,Me-c_018,Pest-Smirnov}, we call the  equivariant compactification associated to the subalgebra $\Gro(X)$ the \emph{Gromov compactification} of the isometric $G$-space $X$, and we denote it by $\g \colon X\to \g X$. In the case where the metric on $X$ is not bounded, we propose a definition for the Gromov compactification in Section~\ref{s:GromovComp}. It is clear that $\g X$ is non-trivial as long as $X$ is non-trivial; in fact, $\g$ is always proper. Hence, isometric $G$-spaces cannot provide examples with the property of Pestov's, but one may ask for examples of isometric systems with no compactifications \emph{above} $\g$.

\pagebreak
\begin{example} \label{ex:Stoyanov}\ 

As mentioned in \cite{Pest-Smirnov}, an elegant geometric example where we can understand the compactifications $\g$ and $\beta_G$ is provided by the \emph{unit sphere} of the separable infinite-dimensional (complex or real) Hilbert space, $$X=S_{\ell^2} \coloneqq\{v\in\ell^2:\|v\|=1\},$$
under the action of the whole unitary (or orthogonal) group $G=U(\ell^2)$ with the strong operator topology. Indeed, Stoyanov \cite{Sto,Sto2,DPS} proved that the greatest equivariant compactification of $S_{\ell^2}$ can be identified with the \emph{unit ball} of $\ell^2$ with the weak topology. From this, one can deduce moreover that $\g=\beta_G$ up to equivalence (see Proposition \ref{p:S-Gr} below).
\end{example}
 
\begin{remarks} \label{r:2} \ 
	\begin{enumerate}
		\item As the case of $\beta_G(S_{\ell^2})$ shows, the maximal $G$-compactification of a Polish non-compact space might be metrizable for dynamically massive actions. Recall, in contrast, that the \v{C}ech--Stone compactification $\beta X$ of any Polish non-compact space $X$ cannot be metrizable.
		\item Let $\beta_G G$ be the greatest $G$-compactification of 
		the standard left action of a topological group $G$ on itself (the so-called \textit{greatest ambit} of~$G$). Then
		$\beta_G G$ is metrizable if and only if $G$ is \textit{precompact} and second countable. On the other hand, there are interesting cases with metrizable 
		 $\beta_G X$ for Polish coset $G$-spaces $X=G/H$ (e.g., the unit sphere $S_{\ell^2}$ from Example \ref{ex:Stoyanov} and the Urysohn sphere $\mathbb{U}_1$ from Theorem \ref{t:Urys-sphere} and Example \ref{ex:6Examples}.4).
	\end{enumerate}
\end{remarks}

\sk

\subsection*{Problems and results}
The beautiful result of Stoyanov from Example \ref{ex:Stoyanov} motivates the following general questions and problems:

\begin{question} \label{q:3} \ 
\begin{itemize}
	\item [(a)] (Smirnov \cite{Sm-geom}) Can ``simple geometric objects'' be maximal equivariant compactifications?
	\item [(b)] Study the greatest $G$-compactification $\beta_G\colon X \to \beta_G X$ of (natural) Polish $G$-spaces. In particular: when is $\beta_G X$ metrizable ?
	\item [(c)] Study the Gromov compactification $\g\colon X \to \g X$ for natural isometric actions of Polish groups. In particular: when do we have $\g=\beta_G$ (up to equivalence)?
\end{itemize}
\end{question}

\begin{question} \label{q:Pest}\ 

More concretely, the question is raised by Pestov in \cite{Pest-Smirnov} as of whether the equation $\g = \beta_G$ holds in the following examples of $G$-spaces $X$ (which resemble the unit sphere of the Hilbert space in many aspects) :
\begin{enumerate}
	\item The \emph{Urysohn sphere} $X=\mathbb{U}_1$, under the action of the whole isometry group $G=\Iso(\mathbb{U}_1)$.
	\item The unit sphere $X=S_\mfG$ of the \emph{Gurarij space} $\mfG$, under the action of the linear isometry group  $G=\Aut(\mfG)$.
	\item The unit spheres of other distinguished Banach spaces under the action of the corresponding linear isometry groups. For the spaces $L^p[0,1]$, $1<p<\infty$, $p\neq 2$, is it true that the natural compactification $S_p\to B_p^w$ of the unit sphere into the unit ball with the weak topology is the maximal equivariant (or the Gromov) compactification of $S_p$?
\end{enumerate}
All the groups in question are endowed with the topology of pointwise convergence (i.e., for groups of linear isometries, the strong operator topology).
\end{question}

One of our main results
 is a positive answer in the case of the Urysohn sphere (see Theorem~\ref{t:Urys-sphere} and its alternative proof in Example \ref{ex:6Examples}.4). That is, the greatest equivariant compactification of  $\mathbb{U}_1$ is the Gromov compactification. After some preliminaries on the Gromov compactification in Section~\ref{s:GromovComp}, we give a direct proof of this result in Section~\ref{s:Urysohn}.  
  Actually, the proof uses a couple of key properties of 
  $\mathbb{U}_1$ and applies also to the unbounded Urysohn space $\mathbb{U}$, as well as to non-separable or non-complete Urysohn-like spaces.

In Section~\ref{s:Models}, after discussing a unified, model-theoretic approach to the examples mentioned above, we show that the answer is negative for the unit sphere of the Gurarij space (see Theorem \ref{t:Gurarij}). In other words, denoting $G=\Aut(\mfG)$, the algebra $\RUC_G(S_\mfG)$ is strictly larger than the closed algebra generated by the functions $f_z(x)=\|x-z\|$ for $z\in S_\mfG$. Nevertheless, we show that if $S_V$ is the unit sphere of a \emph{separably categorical}, \emph{approximately ultrahomogeneous} Banach space $V$ and $G=\Aut(V)$ is the corresponding linear isometry group, then $\RUC_G(S_V)$ is generated by the functions $f_v(x)=\|x-v\|$ with $v\in V$ (see Theorem~\ref{t:Banach-spaces}). That is, one needs to consider the distance functions to elements outside the unit sphere, but this is enough.

This result applies to the Gurarij space, but also to the Banach spaces $L^p[0,1]$ for $1\leq p<\infty$, $p\notin 2\N$. Moreover, our methods allow us to give a negative answer to Pestov's question concerning the natural compactification $S_p\to B_p^w$ of the sphere of $L^p[0,1]$ into the unit ball with the weak topology. More precisely, we show that for $1<p<\infty$, $p\neq 2$, the Gromov compactification is \emph{not} a factor of the weak unit ball. On the other hand, we do not know whether the maximal equivariant and the Gromov compactifications coincide.

Let us say some words about the model-theoretic approach of Section~\ref{s:Models}. The Urysohn sphere, the Gurarij space and the Banach spaces $L^p[0,1]$, $1\leq p<\infty$---when seen as structures in the appropriate languages, in the sense of continuous logic---are examples of \emph{separably categorical} structures. This means that they are the only separable models of their respective first-order theories, and implies a number of strong properties. In \cite{bentsa}, Ben Yaacov and Tsankov showed how to translate many properties of separably categorical structures into facts about the dynamics of their automorphism groups.

One consequence of the ideas of \cite{bentsa}, as stated and exploited in \cite{ibaDyn}, is that if $M$ is a separably categorical structure and $G$ is its automorphism group, then a function $f\in\RUC_G(M)$ can be seen as a \emph{definable predicate} of the structure $M$, \emph{provided} that $f$ is uniformly continuous with respect to the metric of $M$. One simple but crucial observation of the present paper, which had gone unnoticed before, is that the hypothesis of uniform continuity can be dropped. In other words, for separably categorical structures we have the equality:
\begin{equation*}
\RUC_G(M)=\Def(M).
\end{equation*}
Thus the maximal equivariant compactification $\beta_G M$ is precisely $S_1(M)$, the space of 1-types over $M$. (For the definitions of $\Def(M)$ and $S_1(M)$ see Section~\ref{s:Models}.) In particular, $\beta_G M$ is \textit{metrizable}.

If, moreover, $M$ enjoys quantifier elimination in a natural language, this permits to understand the compactification $\beta_G M$ fairly well, and helps to determine whether it coincides with the Gromov compactification. In fact, the condition $\Gro(M)=\Def(M)$ can be seen as a \emph{metric} form of the classical model-theoretic notion of \emph{minimality}: every one-dimensional definable predicate is a continuous combination of distance functions to points. With this approach we can easily (re-)prove that $\beta_G=\g$ for the unit sphere of the Hilbert space (recovering Stoyanov's result) and for the Urysohn sphere, as well as for other spaces such as the Rado graph with the graph metric (see Examples~\ref{ex:6Examples}). The result about separably categorical Banach spaces mentioned above is also an immediate consequence of this method.

Finally, in Section~\ref{s:UMT}, we study \emph{uniformly micro-transitive} $G$-spaces, a notion that is related to the topics of the preceding sections. We record some basic remarks and prove that every separably categorical, transitive structure is uniformly micro-transitive. This yields a uniform version of Effros' theorem for isometric actions of Roelcke precompact Polish groups (see Theorem \ref{t:UnifEffr}).

\subsection*{Acknowledgments} We are grateful to Vladimir Pestov for putting us in contact after knowing of our independent approaches to the questions of \cite{Pest-Smirnov}. The first author would like to thank Ita{\"\i} Ben Yaacov and Todor Tsankov for enriching conversations.

\bigskip

\noindent\hrulefill

\sk 
\section{The Gromov compactification of isometric systems} \label{s:GromovComp}
\sk

In this section we give a definition of the Gromov compactification for general (not necessarily bounded) metric spaces, and discuss some of its basic properties in connection with isometric actions.

Let $(X,d)$ be a metric space. We will denote by $\U_{w,d}$ the initial uniformity on $X$ generated by the set $X^d$ of \emph{elementary Kat\v{e}tov functions}:
$$X^d\coloneqq  \{f_z\colon  X \to \R,\ x \mapsto d(x,z): z \in X\}.$$
Equivalently, $\U_{w,d}$ is generated by the system $\{d_z: z \in X\}$ of pseudometrics defined by
$$
d_z(x,y)\coloneqq  |d(x,z)-d(y,z)|. 
$$
It is clear that $\U_{w,d}$ is coarser than $\U_d$, where $\U_d$ denotes the usual uniformity induced by the metric on $X$. It is also easy to check that $\U_{w,d}$ is compatible, i.e., it induces the same topology as $\U_d$. We call $\U_{w,d}$ the \emph{$d$-weak uniformity}. 

We recall that the \emph{precompact replica} of a Hausdorff uniformity $\U$ on a set $X$ is the finest uniformity among all totally bounded uniformities on $X$ that are coarser than~$\U$. The precompact replica of $\U$ is always compatible with $\U$, and its completion is called the \emph{Samuel compactification} of $(X,\U)$. See, for instance, \cite{Isbell}. The subalgebra of $\operatorname{CB}(X)$ that corresponds to the Samuel compactification of $(X,\U)$ is just the algebra of all bounded, $\U$-uniformly continuous, real-valued functions on $X$. 

\begin{defin} \label{d:Gr-comp}
	Let $(X,d)$ be an arbitrary metric space. We define the \emph{Gromov compactification} of $X$ as the Samuel compactification of $(X,\U_{w,d})$, and we denote it by $\g\colon X\to \g X$.
\end{defin}

Note, in particular, that $\gamma$ is always a topological embedding. If $(X,d)$ is separable and bounded then $\g X$ is a metrizable compactum.

We will denote by $\A_{w,d}$ the algebra of real-valued continuous functions on $X$ that factor continuously through the Gromov compactification of $X$. Thus, $\A_{w,d}$ is precisely the algebra of $\U_{w,d}$-uniformly continuous, bounded, real functions on $X$.

Now suppose that $G$ is a topological group acting continuously and by isometries on $X$, i.e., $X$ is an isometric $G$-space. We will argue that in that case $\g X$ carries the structure of a $G$-space as well, so that the map $\gamma$ becomes an equivariant compactification of the system $G\actson X$.

For this it suffices to check that the algebra $\A_{w,d}$ is $G$-invariant and contained in $\RUC_G(X)$. Equivalently, we can prove that $\U_{w,d}$ is an \emph{equiuniformity} on the $G$-space~$X$. We recall that if $X$ is an arbitrary $G$-space, an equiuniformity $\U$ on $X$ is a compatible uniformity such that: 
\begin{enumerate}
	\item ($\U$ is \emph{saturated}) for every $g\in G$, the translation $g \colon X \to X$ is $\U$-uniform; 
	\item ($\U$ is \emph{motion equicontinuous}, \cite{Br}) for every entourage $\varepsilon \in
	\U$ there exists a neighborhood $U\in N_e$ of the identity such that
	$(gx,x) \in \varepsilon$ for every $(g,x) \in U \times X$.
\end{enumerate}

\begin{prop}
	[Brook \cite{Br}]
	\label{p:equiuni-Samuel}
Let $X$ be a $G$-space and $\U$ be an equiuniformity 
on~$X$. Then the action $G\actson X$ extends to a continuous action on the Samuel compactification of $(X,\U)$.
\end{prop}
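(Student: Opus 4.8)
The plan is to realize the Samuel compactification of $(X,\U)$ as the equivariant compactification of $X$ attached to a suitable subalgebra of $\RUC_G(X)$, and then to quote the correspondence between equivariant compactifications and closed unital $G$-invariant subalgebras of $\RUC_G(X)$ recalled in the introduction. Let $\A_\U \subseteq \operatorname{CB}(X)$ be the algebra of all bounded, $\U$-uniformly continuous, real-valued functions on $X$. This is a closed unital subalgebra (a uniform limit of bounded uniformly continuous functions is again bounded and uniformly continuous), and, as recalled above, its Gelfand space of maximal ideals is exactly the Samuel compactification of $(X,\U)$. So the whole statement reduces to two verifications: that $\A_\U$ is $G$-invariant, and that $\A_\U \subseteq \RUC_G(X)$.

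For $G$-invariance I would argue as follows: given $g\in G$ and $f\in\A_\U$, saturation of $\U$ says that $g^{-1}\colon X\to X$ is $\U$-uniform, so $gf=f\circ g^{-1}$ is again bounded and $\U$-uniformly continuous, i.e., $gf\in\A_\U$. For the inclusion into $\RUC_G(X)$, fix $f\in\A_\U$ and $\eps>0$; choose an entourage $\delta\in\U$ with $|f(x)-f(y)|<\eps$ whenever $(x,y)\in\delta$, and then, using motion equicontinuity of $\U$, a neighborhood $U\in N_e$ with $(gx,x)\in\delta$ for all $(g,x)\in U\times X$. This yields $\sup_{x\in X}|f(gx)-f(x)|\le\eps$ for every $g\in U$, which is exactly the defining condition for $f\in\RUC_G(X)$. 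Once these two points are in place, $\A_\U$ is a closed unital $G$-invariant subalgebra of $\RUC_G(X)$, hence its Gelfand space --- the Samuel compactification of $(X,\U)$ --- carries a continuous $G$-action for which the canonical dense embedding of $X$ is equivariant, and this action visibly restricts to the original one on $X$.

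The only genuinely non-formal ingredient of this route is the passage from a $G$-invariant subalgebra $\A\subseteq\RUC_G(X)$ to a \emph{continuous} action on its Gelfand space --- this is the content of the cited correspondence, and it is where the right uniform continuity built into the definition of $\RUC_G(X)$ is really used. If one wants a self-contained proof avoiding that black box, one can instead work directly with completions: the precompact replica $\U^\ast$ of $\U$ is itself an equiuniformity (it is saturated because each translation, being a uniform automorphism of $(X,\U)$, maps the family of totally bounded uniformities coarser than $\U$ onto itself and so fixes its finest member $\U^\ast$; and it is motion equicontinuous because $\U^\ast\subseteq\U$), so each translation extends uniquely to a homeomorphism $\bar g$ of the completion $\hat X$ and $g\mapsto\bar g$ is a homomorphism into $\Homeo(\hat X)$. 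The hard part of this direct route --- the step I would expect to be the main obstacle --- is the joint continuity of $G\times\hat X\to\hat X$: I would fix a closed symmetric entourage $\hat\eps$ of the compact uniform space $\hat X$ with $\hat\eps\circ\hat\eps$ inside the target entourage, restrict $\hat\eps$ to $X$, apply motion equicontinuity to produce $U\in N_e$ with $(gx,x)\in\hat\eps$ for all $(g,x)\in U\times X$, and then pass to limits of nets from $X$ (using that $\hat\eps$ is closed) to get $(\bar g q,q)\in\hat\eps$ for all $g\in U$ and all $q\in\hat X$; combined with the neighborhood $\hat\eps[p]$ of a given point $p$ this gives continuity at $(e,p)$, and continuity at an arbitrary $(g_0,p_0)$ then follows by composing with the homeomorphism $\bar g_0$.
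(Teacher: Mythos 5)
Your first route is exactly the paper's proof: the authors take the algebra of bounded $\U$-uniformly continuous functions, note that saturation gives $G$-invariance and motion equicontinuity gives the inclusion into $\RUC_G(X)$, and then invoke the correspondence with equivariant compactifications. Your verifications are correct (the extra direct argument via completions is a valid bonus but not what the paper does), so the proposal matches the paper's approach.
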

\begin{proof}
Let $\A\subseteq \operatorname{CB}(X)$ be the subalgebra of $\U$-uniformly continuous functions on $X$. Saturation shows that $\A$ is $G$-invariant, and motion equicontinuity implies that every $f\in \A$ is RUC.
\end{proof}

As said in the introduction, for a \emph{bounded} metric space $(X,d)$ we will denote by $\Gro(X)$ the closed (unital) subalgebra of $\operatorname{CB}(X)$ generated by the set $X^d$ of elementary Kat\v{e}tov functions.

We recall that an arbitrary function $\xi\colon X\to\R$ is \emph{Kat\v{e}tov} if 
$$
|\xi(x)-\xi(y)| \leq d(x,y) \leq \xi(x)+\xi(y) \ \ \ \ \forall x,y \in X. 
$$ 
We will denote by $\operatorname{K}(X)$ the set of Kat\v{e}tov functions on $X$ that are bounded by the diameter of $X$. Thus, if $X$ is bounded, $\operatorname{K}(X)$ is a compact space with the topology of pointwise convergence.

\begin{prop} \label{p:Gr-comp}  Let $(X,d)$ be a metric space, $G$ a topological group and $G\actson X$ a continuous isometric action. Then:
	\begin{enumerate}
		\item The $d$-weak uniformity $\U_{w,d}$ is an equiuniformity of $X$ as a $G$-space, and the compactification $\g\colon X \to \g X$ is a proper equivariant compactification.
		\item Suppose the metric $d$ is bounded.
		\begin{enumerate}
		\item Then $\A_{w,d}=\Gro(X)$. In particular, $\g=\beta_G$ (up to equivalence) if and only if $\RUC_G(X)$ is generated by $X^d$ (as a closed unital algebra).
		\item The space $\g X$ can be identified with the closure of $X^d$ inside $\operatorname{K}(X)$, and $\gamma$ with the map sending $z$ to the elementary Kat\v{e}tov function $f_z$.
		\end{enumerate}
	\end{enumerate} 
\end{prop}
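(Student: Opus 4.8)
The plan is to treat the three items in order, with \emph{(2b)} coming out essentially as a corollary of \emph{(2a)}.

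For \emph{(1)} I would verify that $\U_{w,d}$ is an equiuniformity and then quote Proposition~\ref{p:equiuni-Samuel}; both defining conditions are checked on the generating pseudometrics $d_z$. Saturation is immediate from the fact that $G$ acts by isometries: $d(gx,z)=d(x,g^{-1}z)$ gives $d_z(gx,gy)=d_{g^{-1}z}(x,y)$ for all $x,y$, so each translation pulls a generating pseudometric back to a generating pseudometric. For motion equicontinuity, a basic entourage is cut out by finitely many $z_1,\dots,z_n$ and some $\de>0$; the same isometry identity together with the triangle inequality yields $d_{z_i}(gx,x)\le d(g^{-1}z_i,z_i)$ uniformly in $x\in X$, and continuity of the action provides a neighborhood of $e$ on which each $d(g^{-1}z_i,z_i)$ is $<\de$. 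Properness of $\g$ then comes for free: $\U_{w,d}$ is Hausdorff (since $d_x(x,y)=d(x,y)$) and, as already noted, compatible, so $X$ embeds topologically in its Samuel compactification $\g X$.

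For \emph{(2a)} the inclusion $\Gro(X)\sbs\A_{w,d}$ is immediate, since $|f_z(x)-f_z(y)|=d_z(x,y)$ makes each $f_z$ uniformly continuous for $\U_{w,d}$ and $\A_{w,d}$ is a closed unital algebra. The substance is the reverse inclusion, and this is the step I expect to be the main obstacle. Given $f\in\A_{w,d}$ with $\|f\|_\infty=M$ and $\eps>0$, uniform continuity produces $z_1,\dots,z_n$ and $\de>0$ controlling the oscillation of $f$ through the finitely many coordinates $\Phi=(f_{z_1},\dots,f_{z_n})\colon X\to[0,\diam X]^n=:C$, a compact cube (here boundedness of $d$ is used). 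I would then manufacture a continuous function $\tilde f$ on $C$ with $\|f-\tilde f\circ\Phi\|_\infty\le\eps$ by the inf-convolution formula $\tilde f(p)=\inf_{x\in X}\bigl(f(x)+\la\,\|p-\Phi(x)\|_\infty\bigr)$ with $\la=2M/\de$: this is $\la$-Lipschitz, hence continuous on $C$, and the estimate splits into the trivial upper bound $\tilde f(\Phi(x_0))\le f(x_0)$ and a lower bound $\tilde f(\Phi(x_0))\ge f(x_0)-\eps$ that uses the oscillation control when $\|\Phi(x)-\Phi(x_0)\|_\infty<\de$ and the choice of $\la$ otherwise. A Stone--Weierstrass approximation of $\tilde f$ on $C$ by a polynomial $P$ in the coordinates then gives $P\circ\Phi\in\Gro(X)$ within $2\eps$ of $f$; since $\Gro(X)$ is closed and $\eps$ arbitrary, $f\in\Gro(X)$. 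The ``in particular'' clause is then formal: by \emph{(1)} one has $\A_{w,d}\sbs\RUC_G(X)$, so $\g$ is a factor of $\beta_G$, and two equivariant compactifications coincide up to equivalence precisely when their algebras agree; hence $\g=\beta_G$ iff $\RUC_G(X)=\A_{w,d}=\Gro(X)$, i.e.\ iff $\RUC_G(X)$ is the closed unital algebra generated by $X^d$.

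For \emph{(2b)} I would work with $\iota\colon X\to\operatorname{K}(X)$, $z\mapsto f_z$ (each $f_z$ is indeed Kat\v{e}tov and bounded by $\diam X$). It is injective because $f_z(z)=0$, continuous for the pointwise topology because $z\mapsto d(x,z)$ is, and a topological embedding because $f_{z_\al}\to f_z$ pointwise forces $d(z,z_\al)=f_{z_\al}(z)\to0$; moreover the subspace uniformity that $\operatorname{K}(X)\sbs\R^X$ (with the product uniformity) induces on $\iota(X)$ is generated by $(f_z,f_{z'})\mapsto|d(z,x)-d(z',x)|=d_x(z,z')$, that is, it is exactly (the image of) $\U_{w,d}$. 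Since $\operatorname{K}(X)$ is compact (closed in $[0,\diam X]^X$), $\overline{X^d}=\overline{\iota(X)}$ is a compactification of $(X,\U_{w,d})$, hence a factor of $\g X$. To get equality I would observe that the coordinate projection $\operatorname{ev}_x$ of $\operatorname{K}(X)$ satisfies $\operatorname{ev}_x\circ\iota=f_x$, so by Stone--Weierstrass the algebra on $X$ attached to the compactification $\overline{X^d}$ is the closed unital algebra generated by $\{f_x:x\in X\}=X^d$, which equals $\Gro(X)=\A_{w,d}$ by \emph{(2a)}; thus $\overline{X^d}$ and $\g X$ have the same attached algebra and are therefore equivalent, the identification being $z\mapsto f_z$.
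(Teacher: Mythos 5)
Your proof is correct, and parts (1) and (2b) follow essentially the same route as the paper: the same identity $d_z(gx,gy)=d_{g^{-1}z}(x,y)$ and the same estimate $d_z(gx,x)\le d(g^{-1}z,z)$ for the equiuniformity, and the same observation that the trace on $X^d$ of the product uniformity of $\operatorname{K}(X)$ is exactly $\U_{w,d}$ (the paper then identifies $\ov{X^d}$ directly with the completion of $(X,\U_{w,d})$, while you close the loop through the attached algebras and (2a); both are fine). The genuine divergence is in (2a). The paper disposes of the inclusion $\A_{w,d}\sbs\Gro(X)$ in two lines by invoking standard uniform-space facts: the closed unital algebra generated by a family of bounded functions induces the same totally bounded uniformity as the family, and a closed unital subalgebra of $\operatorname{CB}(X)$ coincides with the algebra of all bounded functions uniformly continuous for the (totally bounded) uniformity it induces. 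You instead prove this from scratch: the inf-convolution $\tilde f(p)=\inf_x\bigl(f(x)+\la\|p-\Phi(x)\|_\infty\bigr)$ with $\la=2M/\de$ correctly produces a Lipschitz function on the cube with $\|f-\tilde f\circ\Phi\|_\infty\le\eps$, and Stone--Weierstrass finishes. Your version is longer but self-contained and makes explicit exactly where boundedness of $d$ enters (compactness of the cube $C$); the paper's version buys brevity at the cost of leaning on the Samuel/Gelfand correspondence as a black box. Both are valid.
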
 
\begin{proof} (1) The equation $d_z(gx,gy)=d_{g^{-1}z}(x,y)$ shows that each $g\colon X\to X$ is uniformly continuous with respect to the system of pseudometrics $\{d_z:z\in X\}$. That is, $\U_{w,d}$ is saturated. On the other hand, given $z\in X$ and $\epsilon>0$, we can find a \nbd $U\in N_e$ such that $d(g^{-1}z,z)<\epsilon$ for every $g\in U$. From this and the inequality
	 $$d_z(gx,x)=|d(gx,z)-d(x,z)|=|d(x,g^{-1}z)-d(x,z)|\leq d(g^{-1}z,z),$$ we see that $\U_{w,d}$ is motion equicontinuous. Thus $\U_{w,d}$ is an equiuniformity and Proposition~\ref{p:equiuni-Samuel} implies that $\gamma\colon X\to\g X$ is a proper equivariant compactification of $G\actson X$.

	\vskip 0.2cm
	(2.a)  
	If $d$ is bounded then every elementary Kat\v{e}tov function $f_z$ is bounded. 
	The algebra $\Gro(X,d)$ and the larger algebra $\A_{w,d}$ induce the same compatible precompact uniformity on $X$  
	(namely, the precompact uniformity $\U_{w,d}$ on $X$ induced by the set of real functions $X^d\coloneqq  \{f_z\colon  X \to \R: z \in X\}$). This implies that $\Gro(X,d)=\A_{w,d}$.
	
	(2.b) 
	Under the identification $X\simeq X^d$, $z\mapsto f_z$, the $d$-weak uniformity on $X$ is precisely the trace of the compact uniformity of $\operatorname{K}(X)$ on $X^d$. Indeed, each subbasic entourage $\{(x,y)\in X^2:d_z(x,y)<\eps\}$ is the restriction of the subbasic entourage $\{(\xi,\zeta)\in\operatorname{K}(X)^2:|\xi(z)-\zeta(z)|<\eps\}$. Hence we may identify the closure $\ov{X^d}\subseteq\operatorname{K}(X)$ with the completion of $(X,\U_{w,d})$, and thus with the Gromov compactification of~$X$.
\end{proof}

We observe that, as a corollary, we can derive a result of Ludescher and de Vries \cite{Lud-Vr80}, which asserts that if a $G$-space $X$ admits a $G$-invariant metric then $X$ admits a proper equivariant compactification.

\begin{remark} \label{r:nonMetr}\ 

	Let $\U_{w,d}$ be the $d$-weak uniformity of $(X,d)$, where $d$ is unbounded. Then $\U_{w,d}$ is not totally bounded. Hence, the corresponding Samuel compactification is not metrizable. Indeed, if a uniform space $\U$ is not totally bounded, it contains a uniformly discrete infinite subset. It is then easy to see that the Samuel compactification of $\U$ contains a subspace homeomorphic to $\beta \N$. 
\end{remark}

Recall that the \emph{Roelcke uniformity} of a topological group $G$ is the intersection $\U_L\cap \U_R$ of the natural left and right uniformities of $G$ (for first-countable groups, the left and right uniformities are induced, respectively, by any left- or right-invariant compatible metrics on~$G$). The \emph{Roelcke compactification} of $G$ is the Samuel compactification of the Roelcke uniformity \cite{Usp-comp}.

\begin{prop}
Let $(X,d)$ be a metric space and $G\actson X$ a continuous isometric action. Given $a \in X$, let $\Gamma_a$ be the closure of $Ga$ in $\g X$, and let $\g_a\colon  G \to \Gamma_a,\ g\mapsto ga$ be the induced $G$-ambit. Then $\g_a$ is a factor of the Roelcke compactification of $G$.
\end{prop}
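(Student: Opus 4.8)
The plan is to exhibit $\g_a$ as a factor of the Roelcke compactification $R(G)$ by constructing a continuous $G$-equivariant surjection $\bar o\colon R(G)\to\Gamma_a$ taking the distinguished point of $R(G)$ to $a$. Recall that $R(G)$ is the Samuel compactification of $(G,\U_L\cap\U_R)$; since $\U_L\cap\U_R$ is coarser than both $\U_L$ and $\U_R$, the left translations of $G$ on itself are $(\U_L\cap\U_R)$-uniformly continuous, hence extend to $R(G)$ and turn it into a compact $G$-space in which the orbit of the identity is dense---a $G$-ambit (see \cite{Usp-comp}). By the universal property of the Samuel compactification, it is enough to show that the orbit map
$$o\colon G\longrightarrow \g X,\qquad g\longmapsto \g(ga)=\g_a(g)$$
is uniformly continuous from $(G,\U_L\cap\U_R)$ to the (unique) uniformity of the compactum $\g X$: such an $o$ extends to a continuous $\bar o\colon R(G)\to\g X$ whose image is the closure $\overline{Ga}=\Gamma_a$, which is $G$-equivariant by density (since $\bar o(hg)=h\cdot\bar o(g)$ for $g,h\in G$ by equivariance of $\g$, and both sides are continuous in the first variable), and which sends $e$ to $\g_a(e)$. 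Equivalently, in the language of the introduction, one is showing that the algebra $\{g\mapsto F(ga):F\in C(\Gamma_a)\}$ is contained in the algebra of bounded $(\U_L\cap\U_R)$-uniformly continuous functions on $G$, which is the one associated to $R(G)$.

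To prove the uniform continuity of $o$, fix a symmetric entourage $E$ of $\g X$ and a symmetric entourage $E'$ with $E'\circ E'\subseteq E$. Using that $\U_L\cap\U_R$ has a base of entourages of the form $\{(g,h):h\in VgV\}$ with $V$ a \nbd of the identity, it suffices to find such a $V$ with $(\g(v_1gv_2a),\g(ga))\in E$ for all $g\in G$ and $v_1,v_2\in V$, and for this we compare $v_1gv_2a$ with $ga$ by interposing the point $gv_2a$. For the inner comparison---$gv_2a$ against $ga$---we use the compatibility of the $d$-weak uniformity with the isometric action: for every $z\in X$ and $g\in G$,
$$d_z(gv_2a,\,ga)=d_{g^{-1}z}(v_2a,\,a)\le d(v_2a,\,a),$$
the identity because $G$ permutes the pseudometrics $\{d_z\}_{z\in X}$ generating $\U_{w,d}$, and the inequality because each $d_w$ is dominated by $d$---this is the computation behind Proposition~\ref{p:Gr-comp}(1). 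Since $G\actson X$ is continuous at $(e,a)$ we may choose a \nbd $V_2$ of $e$ making $d(v_2a,a)$ smaller than any prescribed threshold for $v_2\in V_2$; as the uniformity of $\g X$ restricted to $X$ is coarser than $\U_{w,d}$, this puts $(\g(gv_2a),\g(ga))$ into $E'$ for all $g\in G$ once $V_2$ is small enough. For the outer comparison---$v_1gv_2a$ against $gv_2a$---we abandon the metric and use compactness of $\g X$ with continuity of the extended action $G\times\g X\to\g X$: the set $\{(v,p):(vp,p)\in E'\}$ is open and contains $\{e\}\times\g X$, so by compactness of $\g X$ there is a \nbd $V_1$ of $e$ with $(vp,p)\in E'$ for all $v\in V_1$ and $p\in\g X$; applying this with $p=\g(gv_2a)$ handles the outer comparison for all $g$ and $v_2$. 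With $V=V_1\cap V_2$, composing the two steps yields $(\g(v_1gv_2a),\g(ga))\in E'\circ E'\subseteq E$.

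The point to be careful about is that the left and right neighborhoods $V_1,V_2$ are governed by completely different mechanisms: the right translations $v_2$ are absorbed through the special interaction of $\U_{w,d}$ with the isometric action---the displayed inequality, which is exactly what made $\U_{w,d}$ an equiuniformity---while the left translations $v_1$ are absorbed by a soft compactness argument on $\g X$ that uses nothing about the metric, reflecting only that $\g_a$ is automatically an $\RUC$-compactification of $G$ as a $G$-ambit. In particular one should not try to bound the outer comparison by $d(v_1y,y)$ with $y=gv_2a$: that quantity need not be small uniformly over the orbit unless $\U_d$ itself is an equiuniformity, which typically fails (e.g.\ for $G=\Iso(X)$ with the topology of pointwise convergence). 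No boundedness or separability hypothesis on $(X,d)$ is used.
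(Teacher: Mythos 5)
Your proof is correct and follows essentially the same route as the paper's: you establish that the orbit map $g\mapsto ga$ is Roelcke uniformly continuous into $\g X$ by splitting a basic entourage $VgV$ into a left part, controlled by the identity $d_z(gva,ga)=d_{g^{-1}z}(va,a)\le d(va,a)$ (exactly the paper's computation), and a right part, which the paper handles via the already-proved motion equicontinuity of $\U_{w,d}$ and you handle via the equivalent tube-lemma argument on the compact space $\g X$. The only cosmetic difference is that the paper phrases the conclusion as ``the orbit map is both left and right uniformly continuous, hence factors through the Roelcke compactification,'' while you compose the two estimates directly.
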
 
\begin{proof} It is enough to show that the orbit map 
	$$\tilde{a}\colon  G \to (X,\U_{w,d}),\ g\mapsto ga$$ is left and right uniformly continuous. Right uniform continuity follows from the fact that $\U_{w,d}$ is motion equicontinuous. Now we show that $\tilde{a}\colon  (G,\U_L) \to (X,\U_{w,d})$ is uniform. It is equivalent to show that $f_z \circ \tilde{a}\colon  (G,\U_L) \to \R$ is uniformly continuous for every $z \in X$. 
	Given $\eps >0$ choose $U\in N_e$ such that $d(a,ua) < \eps \ \forall u \in U$. Then for every $(g,u) \in G \times U$ we have
	 $$|f_z(ga)-f_z(gua)| =|d(ga,z)-d(gua,z)|= |d(a,g^{-1}z) - d(ua,g^{-1}z)| \leq
	d(a,ua) < \eps,$$
proving the uniform continuity with respect to $\U_L$.
\end{proof}

\sk

Now let us recall a few facts about \textit{proximity relations} (see, for example, \cite{NW}) that will be used in the next section. If $\U$ is a uniformity on a space $X$ (which we think of as a system of entourages on $X$), the \emph{proximity relation associated to $\U$} is a binary relation $\delta_\U$ between subsets of $X$, defined by:
$$A\delta_\U B\iff \forall \eps\in\U, \ \ (A\times B) \cap \eps\neq\emptyset.$$
For instance, if $(X,d)$ is a metric space and $\delta_{w,d}$ denotes the proximity relation associated to the $d$-weak uniformity on $X$, then for any $A,B\subseteq X$ we have $A\delta_{w,d}B$ if and only if for every $\eps>0$ and every finite set $F\subseteq X$ there exist $a\in A$ and $b\in B$ such that $d_z(a,b)<\eps$ for each $z\in F$.

If $\U_1,\U_2$ are two uniformities on a space $X$ such that for every $A,B\subseteq X$ the relation $A\delta_{\U_1}B$ implies $A\delta_{\U_2}B$, then the precompact replica of $\U_2$ is coarser than the precompact replica of $\U_1$. In particular, if $\U_2$ is totally bounded, then $\U_2\subseteq \U_1$. See, for instance, \cite[Ch.~2, Thm.~35]{Isbell}.

\begin{remark} \label{r:prox-delta-G-X}\ 

Let $X$ be a $G$-space and let $\beta_G\colon X\to \beta_G X$ be the corresponding greatest equivariant compactification. Let $\U_G$ denote the trace on $X$ of the unique compatible uniformity on the compact space $\beta_G X$. A family of basic entourages of $\U_G$ is given by the sets of the form
$$\{(x,y)\in X\times X:\forall f\in F,\ |f(x)-f(y)|<\eps\}$$
where $\eps>0$ and $F\subseteq \RUC_G(X)$ is a finite subset. Let $\delta_G$ denote the proximity relation associated to $\U_G$. Then for every $A,B\subseteq X$ we have:
$$\big(\forall U\in N_e,\ \ov{UA}\cap\ov{UB}\neq\emptyset\big) \implies A\delta_G B.$$
This follows easily from the definition of $\RUC$ functions. 
\end{remark}

\sk

We end this section with a characterization of the Gromov compactification for the spheres in Hilbert spaces.

\begin{prop} \label{p:S-Gr} For every Hilbert space $H$ and its sphere $S_H$ 
	the compactification $\nu\colon  S_H \to B^w_H$ (where $B^w_H$ is the unit ball of $H$ endowed with the weak topology) is equivalent to the Gromov compactification of $S_H$. 
\end{prop}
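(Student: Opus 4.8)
The plan is to identify the Gromov compactification $\g S_H$ with the closure of $(S_H)^d \subseteq \mathrm{K}(S_H)$ (using Proposition~\ref{p:Gr-comp}(2.b), valid since $S_H$ is bounded) and show that this closure is homeomorphic to the weak unit ball $B^w_H$ via a natural $\Iso(S_H)$-equivariant map. Concretely, for $v\in B_H$ define the function $\xi_v\colon S_H\to\R$ by $\xi_v(x)=\|x-v\|=\sqrt{1-2\langle x,v\rangle+\|v\|^2}$. First I would check that each $\xi_v$ is a Kat\v{e}tov function bounded by $\diam(S_H)=2$, so the assignment $\Phi\colon v\mapsto \xi_v$ maps $B_H$ into $\mathrm{K}(S_H)$; note $\Phi(x)=f_x$ for $x\in S_H$, so $\Phi$ extends the Gromov map $\gamma$ on $S_H$.

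The key computation is that $\Phi$ is a homeomorphism from $B^w_H$ onto its image, and that this image is exactly $\ov{(S_H)^d}$. For injectivity and for continuity of $\Phi$ from the weak topology, observe that $\xi_v$ determines and is determined by the affine functional $x\mapsto \langle x,v\rangle$ on $S_H$ together with $\|v\|^2$; since the linear span of $S_H$ is dense in $H$, weak convergence $v_\alpha\to v$ in $B_H$ is equivalent to pointwise convergence $\langle x,v_\alpha\rangle\to\langle x,v\rangle$ for all $x\in S_H$ together with convergence of $\|v_\alpha\|^2$ — but here I should be careful: weak convergence gives $\langle x,v_\alpha\rangle\to\langle x,v\rangle$ but only $\|v\|\le\liminf\|v_\alpha\|$, so I need to argue that the extra data $\|v\|^2$ is recovered from the pointwise limit of $\xi_{v_\alpha}^2$, which it is, since $\xi_{v_\alpha}(x)^2 - 1 + 2\langle x,v_\alpha\rangle \to \|v\|^2$ for each fixed $x$. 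So pointwise convergence of $\xi_{v_\alpha}$ on $S_H$ forces both $\langle x,v_\alpha\rangle\to\langle x,v\rangle$ and $\|v_\alpha\|^2\to\|v\|^2$, i.e. $v_\alpha\to v$ weakly; conversely weak convergence plus the identity for $\|v\|^2$ shows $\Phi$ is weak-to-pointwise continuous only if $\|v_\alpha\|\to\|v\|$, which need not hold — so in fact $\Phi$ is \emph{not} continuous on all of $B^w_H$ in the naive sense. The resolution: restrict attention to the sphere. The map $\nu\colon S_H\to B^w_H$, $\nu(x)=x$, has image the \emph{weakly dense} subset $S_H\subseteq B^w_H$, and $B^w_H$ is compact; I claim the closure of $\{\xi_x : x\in S_H\}$ in $\mathrm{K}(S_H)$ is the continuous image of $B^w_H$ under (the corestriction of) $\Phi$, and that $\Phi|_{B^w_H}$ \emph{is} continuous when we use that on a weakly convergent net in $B_H$ landing in the closure we automatically have $\|v_\alpha\|^2\to\|v\|^2$ — this is the point that requires the real work.

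I expect the main obstacle to be precisely this continuity/properness issue: a priori the weak limit of a net in $S_H$ can have norm $<1$, and one must show that the Kat\v{e}tov function $\xi_v$ of such a limit really does arise as a pointwise limit of the $\xi_x$, and that the resulting map $B^w_H \to \ov{(S_H)^d}$ is a continuous bijection. Compactness of $B^w_H$ then upgrades this bijection to a homeomorphism, and $\Iso(S_H)$-equivariance (the unitary group acts compatibly on $S_H$, on $B^w_H$, and on $\mathrm{K}(S_H)$, and $\Phi$ intertwines these actions because $\|Ux - Uv\| = \|x-v\|$) is routine. The cleanest route around the obstacle is to work at the level of proximities or of the generating functions directly: show that the closed subalgebra of $\operatorname{CB}(S_H)$ generated by $\{f_z : z\in S_H\}$ coincides with the closed subalgebra generated by $\{x\mapsto\langle x,v\rangle : v\in H\}$ together with the constants — one inclusion is immediate from $f_z(x)^2 = 2 - 2\langle x,z\rangle$, the other from approximating $\langle x,v\rangle$ by rescaled versions $\|v\|^{-2}(2 - f_{v/\|v\|}(x)^2)/... $, i.e. polynomial combinations — and then identify the Gelfand space of the latter algebra with $B^w_H$ by standard weak-topology arguments. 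I would present the argument in this algebraic form, as it sidesteps the delicate behaviour of Kat\v{e}tov functions under weak limits.
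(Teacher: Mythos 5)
Your final, ``algebraic'' route is sound and is in substance the same as the paper's argument: the paper compares the uniformity generated by the pseudometrics $d_z(u,v)=\bigl|\,\|u-z\|-\|v-z\|\,\bigr|$ with the one generated by $\rho_z(u,v)=|\operatorname{Re}\langle u,z\rangle-\operatorname{Re}\langle v,z\rangle|$, $z\in S_H$, and shows they coincide on $S_H$ via the identity $\|u-z\|^2=2-2\operatorname{Re}\langle u,z\rangle$ for unit vectors --- exactly the identity behind your claim that $\Gro(S_H)$ equals the closed unital algebra generated by the functionals $x\mapsto\operatorname{Re}\langle x,v\rangle$. Two points to supply when writing it up: in the complex case you must work with real parts throughout, and the identification of the Gelfand space of the functional algebra with $B^w_H$ needs weak compactness of the ball, weak density of the sphere in the ball, and Stone--Weierstrass applied to $C(B^w_H)$.

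The first two thirds of your proposal, however, pursue a map that does not exist, and the ``obstacle'' you defer is not a delicate continuity issue but a false statement: the homeomorphism $B^w_H\to\g S_H\subseteq\operatorname{K}(S_H)$ does \emph{not} send $v$ to $\xi_v=\|\cdot-v\|$. If $x_\alpha\in S_H$ converges weakly to $v$ with $\|v\|<1$, then $\|x-x_\alpha\|^2=2-2\operatorname{Re}\langle x,x_\alpha\rangle\to 2-2\operatorname{Re}\langle x,v\rangle$, so the pointwise limit of the $f_{x_\alpha}$ is $x\mapsto\sqrt{2-2\operatorname{Re}\langle x,v\rangle}$ and not $\|x-v\|=\sqrt{1-2\operatorname{Re}\langle x,v\rangle+\|v\|^2}$; for $v=0$ along an orthonormal sequence the limit is the constant $\sqrt2$, not the constant $1$. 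Thus $\ov{(S_H)^d}$ consists of the Kat\v{e}tov functions $x\mapsto\sqrt{2-2\operatorname{Re}\langle x,v\rangle}$ for $v\in B_H$, and with this corrected formula your first approach also works: $v\mapsto\sqrt{2-2\operatorname{Re}\langle\cdot,v\rangle}$ is a continuous injection of the compact space $B^w_H$ into $\operatorname{K}(S_H)$ extending $\gamma$, hence a homeomorphism onto $\g S_H$. You were right to abandon $\Phi$, but because it is the wrong map, not because its continuity is hard to verify.
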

\begin{proof} Let $H \times H \to H, (u,v) \mapsto \langle u, v\rangle$ denote the inner product of the Hilbert space. 
	For each given vector $z \in H$  define the pseudometric
	$$
	\rho_{z}(u,v) = |\operatorname{Re}\langle u,z\rangle - \operatorname{Re}\langle v,z\rangle|. 
	$$
	 The set $\{\rho_{z}: z \in S_H\}$ is a uniform subbase of the \emph{weak uniformity} $\mathcal{U}_w$ on $H$ which induces the weak topology on $H$.  
	The compactification $\nu\colon  S_H \to B^w_H$ is the completion of $S_H$ with respect to the  uniformity $\mathcal{U}_w|_{S_H}$. 
	
	On the other hand we have the precompact uniformity $\U_{w,d}$ of the Gromov compactification of $(S_H,d)$ with a uniform subbase generated by the system of pseudometrics $\{d_{z}: z \in S_H\}$, where 
	$$
	d_{z}(u,v)= \big| \|u-z\| -\|v-z\| \big|. 
	$$
	Now observe that $d_z$ and $\rho_z$ are uniformly equivalent on $S_H$. 
	Indeed, for $z,u,v \in S_H$ we have:
	$$\rho_z(u,v)=\big|(1-\operatorname{Re}\langle u,z\rangle) - (1-\operatorname{Re}\langle v,z\rangle)\big| = \frac{1}{2}\big|\|u-z\|^2-\|v-z\|^2\big| \leq 2d_z(u,v).$$
	Conversely:
	$$d_z(u,v)^2\leq \big| \|u-z\| -\|v-z\| \big|\cdot \big| \|u-z\| +\|v-z\| \big| = \big|\|u-z\|^2-\|v-z\|^2\big| = 2\rho_z(u,v).$$
	This proves that $\U_w|_{S_H} = \U_{w,d}$.
	\end{proof}

\noindent\hrulefill

\sk 
\section{Equivariant compactifications of Urysohn-like spaces}\label{s:Urysohn}
\sk

We begin with the definition of two auxiliary notions, the first of which will be investigated further in Section~\ref{s:UMT}.

Below, $B_\delta(x)$ denotes the ball $\{y\in X:d(x,y)<\delta\}$ and $N_e$ stands, as before, for the set of 
open neighborhoods of the identity in a group $G$.

 \begin{defin} \label{d:UWMT} 
 	 Let $G$ be a topological group and $(X,d)$ be a metric space. Given an action $G\actson X$, we say the action is: 
 	\begin{enumerate}
 	\item \emph{Uniformly weakly micro-transitive (UWMT)} if for every $U \in N_e$ there is $\delta>0$ such that $B_\delta(x)\subseteq \ov{Ux}$ for all $x\in X$. 		
 	\item \emph{Metrically achievable (MA)} if for every $\eps >0$ and $U \in N_e$ there exist a finite subset $F \subseteq X$ and $\delta >0$ such that, for every $x,y\in X$, 
 	\begin{equation*}
 	\big(\forall z \in F,\ |d(z,x)-d(z,y)| < \delta\big) \implies (\exists g \in U,\ d(gx,y) < \eps).
 	\end{equation*}
 	\end{enumerate}
 \end{defin}

 \sk 
 
 \begin{thm} \label{t:coincidence} Let $(X,d)$ be a metric space and $G\actson X$ be a continuous isometric action of a topological group $G$. Suppose that the action is 
 	(UWMT) and (MA). Then the greatest $G$-compactification of $X$ is just the Gromov compactification of $(X,d)$ (that is, $\beta_G=\g$ up to equivalence). 
\end{thm}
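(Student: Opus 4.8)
The goal is to show that under (UWMT) and (MA), the inclusion $\A_{w,d}\subseteq \RUC_G(X)$ of Proposition~\ref{p:Gr-comp}(1) is actually an equality of the associated compactifications, i.e.\ that the Gromov compactification is \emph{maximal}. By the dictionary between equivariant compactifications, unital $G$-invariant closed subalgebras of $\RUC_G(X)$, and (in the uniform-space language) totally bounded equiuniformities on $X$, it suffices to compare two uniformities: the $d$-weak uniformity $\U_{w,d}$, whose Samuel compactification is $\g X$, and the trace uniformity $\U_G$ on $X$ coming from $\beta_G X$ (as in Remark~\ref{r:prox-delta-G-X}), whose Samuel compactification is $\beta_G X$. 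Both are totally bounded and compatible, and $\U_G$ is finer than $\U_{w,d}$ because $\A_{w,d}\subseteq\RUC_G(X)$. So the whole content is to prove the reverse inclusion $\U_G\subseteq\U_{w,d}$, and by the proximity criterion recalled just before Remark~\ref{r:prox-delta-G-X} (namely, if $A\,\delta_{w,d}\,B$ implies $A\,\delta_G\,B$ for all $A,B\subseteq X$, then, $\U_G$ being totally bounded, $\U_G\subseteq\U_{w,d}$), it is enough to show the implication
\begin{equation*}
A\,\delta_{w,d}\,B \implies A\,\delta_G\,B \qquad \text{for all } A,B\subseteq X .
\end{equation*}

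\textbf{The main step: from $d$-weak proximity to $G$-proximity.}
Fix $A,B\subseteq X$ with $A\,\delta_{w,d}\,B$, and fix $U\in N_e$; by Remark~\ref{r:prox-delta-G-X} it suffices to produce points with $\ov{WA}\cap\ov{WB}\neq\emptyset$ for every $W\in N_e$, so fix also $W\in N_e$ and choose $U_0\in N_e$ symmetric with $U_0U_0\subseteq W$. Now apply (MA) to $U_0$ and a suitable $\eps>0$: it yields a finite set $F\subseteq X$ and $\delta>0$ such that whenever $x,y$ satisfy $|d(z,x)-d(z,y)|<\delta$ for all $z\in F$, there is $g\in U_0$ with $d(gx,y)<\eps$. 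By definition of $\delta_{w,d}$ (the explicit description given in the excerpt: for every finite $F$ and every $\delta$ there are $a\in A$, $b\in B$ with $d_z(a,b)<\delta$ on $F$), pick such $a\in A$, $b\in B$; then (MA) gives $g\in U_0$ with $d(ga,b)<\eps$. Thus $ga$ lies in the ball $B_\eps(b)$. The point of (UWMT) is now to turn this approximate transitivity into a statement about closures of neighbourhood-orbits: for $U_0$ choose, by (UWMT), $\delta'>0$ with $B_{\delta'}(p)\subseteq\ov{U_0 p}$ for every $p\in X$, and arrange $\eps\le\delta'$ from the start. Then $ga\in B_{\delta'}(b)\subseteq\ov{U_0 b}$, while $ga\in U_0 a\subseteq \ov{U_0 a}$; hence $\ov{U_0 a}\cap\ov{U_0 b}\neq\emptyset$, and a fortiori $\ov{U_0 A}\cap\ov{U_0 B}\neq\emptyset$. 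Since $U_0\subseteq W$ this gives $\ov{WA}\cap\ov{WB}\neq\emptyset$. As $W\in N_e$ was arbitrary, Remark~\ref{r:prox-delta-G-X} yields $A\,\delta_G\,B$, completing the implication.

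\textbf{Conclusion and the expected obstacle.}
Having established $A\,\delta_{w,d}\,B\Rightarrow A\,\delta_G\,B$, the general fact on proximities quoted before Remark~\ref{r:prox-delta-G-X} gives $\U_G\subseteq\U_{w,d}$; combined with the already-known $\U_{w,d}\subseteq\U_G$ we get $\U_{w,d}=\U_G$ as (totally bounded, compatible) uniformities, so their Samuel compactifications agree, i.e.\ $\g X=\beta_G X$ and $\gamma=\beta_G$ up to equivalence. Two points deserve care and are where I expect the real work to lie. First, the order of quantifiers in combining (UWMT), (MA) and Remark~\ref{r:prox-delta-G-X}: one must fix the target neighbourhood $W$ \emph{first}, descend to $U_0$, and only then invoke (MA) and (UWMT) with parameters ($\eps$, $\delta'$) small enough relative to $U_0$ — getting this bookkeeping right is the crux, and a small diagram of which constant depends on which is worth keeping in mind. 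Second, one should double-check the cited proximity lemma applies: $\U_G$ is totally bounded (it is the trace of the uniformity of a compact space), which is exactly the hypothesis needed to pass from ``$\delta_{w,d}$ implies $\delta_G$'' to the inclusion of uniformities rather than merely of precompact replicas; since $\U_{w,d}$ is already its own precompact replica, no further reduction is needed. Beyond these, every step is soft (no metric estimates beyond the triangle-inequality computations already done in Proposition~\ref{p:Gr-comp}), so I do not anticipate a genuinely hard analytic obstacle — the difficulty is entirely in the correct interleaving of the three ingredients.
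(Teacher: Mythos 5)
Your proof is correct and follows essentially the same route as the paper's: the key implication $A\,\delta_{w,d}\,B\Rightarrow A\,\delta_G\,B$ is obtained exactly as in the paper's Claims 1 and 2, with (MA) producing $g\in U_0$ moving $a$ to within $\eps$ of $b$ and (UWMT) upgrading this to $\ov{U_0A}\cap\ov{U_0B}\neq\emptyset$ (the paper merely phrases this second step contrapositively). One small correction to your wrap-up: when $d$ is unbounded, $\U_{w,d}$ is not totally bounded (Remark~\ref{r:nonMetr}), so one does not get $\U_{w,d}=\U_G$; rather, $\U_G$ is coarser than the precompact replica of $\U_{w,d}$, which still yields the factor map $\g X\to\beta_G X$, the reverse factor being automatic from the maximality of $\beta_G$.
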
 
\begin{proof} 
We have to show that $\beta_G$ is a factor of $\g$. As in Remark~\ref{r:prox-delta-G-X}, let $\delta_G$ be the proximity relation associated to the uniformity $\U_G$ induced on $X$ by the greatest equivariant compactification $\beta_G$. Let, on the other hand,  $\delta_{w,d}$ be the proximity relation associated to the $d$-weak uniformity $\U_{w,d}$. It is enough to show that
$$A \delta_{w,d} B \implies A \delta_G B$$
for every pair of subsets $A,B\subseteq X$. Indeed, since $\U_G$ is totally bounded, this implies that $\U_G$ is coarser than the precompact replica of $\U_{w,d}$. Hence there is a continuous map from $\g X$ (the Samuel compactification of $\U_{w,d}$) to $\beta_G X$ which is the identity on~$X$. This gives the desired factor map.

So suppose that $A\delta_{w,d} B$.
	\sk
\nt	\textbf{Claim 1:} $\forall U \in N_e,\ d(UA,B) =0$.                                                                                                                                                                                                                                                                                                                                                                                                                                                                                                                                                                                                                                                                                                                                                                                                                                                                                                                                                                                                                                                                                                                                                                                                                                                                                                                                                                                                                                                                                                                                                                                                                                                                                                                                                                                                                                                                                                                                                         
	\sk 
	
	Proof: Let $\eps >0$ and $U \in N_e$, and choose a corresponding finite set $F\subseteq X$ and $\delta>0$ as given by property (MA) . Since $A\delta_{w,d}B$, there exist $a\in A$, $b\in B$ such that $|d(z,a)-d(z,b)| <\delta$ for every $z\in F$. Hence, by (MA), there is $g \in U$ such that  $d(ga,b) < \eps$.
	This proves Claim 1. 
	
	\sk
\nt	\textbf{Claim 2:} $A \beta_G B$. 
	\sk 
	
	Proof: If not, then as per Remark~\ref{r:prox-delta-G-X} there exists $U \in N_e$ such that $\ov{UA} \cap \ov{UB} =\emptyset$.
	By (UWMT) there is $\delta >0$ such that $B_\delta(x)\subseteq \ov{Ux}$ for all $x\in B$. In particular, for every $b \in B$ we have 
	$UA \cap B_\delta(b)=\emptyset$. Hence $d(UA,B) \geq \delta$, contradicting Claim~1.
	\end{proof}

\sk 

Let us now recall some standard definitions and variants. Here we consider only metric spaces; see Definition~\ref{d:app-ultrahomogeneity} for the case of richer structures.

\begin{defin}[See, for example, \cite{Usp-sub,Pe-nbook,Mel-07}]
	Let $(X,d)$ be a metric space. 
	\begin{enumerate}
		\item $(X,d)$ is \emph{ultrahomogeneous} if every partial isometry $p\colon A \to B$ between finite subsets $A,B\subseteq X$ can be extended to an isometry $g\colon X \to X$.  
		\item Let $G\leq\Iso(X)$ be a subgroup of the group of isometries of $X$. We will say that $(X,d)$ is \emph{approximately $G$-ultrahomogeneous} if for every $\eps >0$ and every partial isometry $p\colon A \to B$ between finite subsets $A,B\subseteq X$ there exists $g \in G$ such that $d(pa,ga) < \eps$ for all $a \in A$.
		\item Let $\diam(X)\in [0,\infty]$ be the diameter of $(X,d)$. We say $(X,d)$ is \emph{finitely injective} if for every pair of finite metric spaces $K \subseteq L$ with diameter less or equal to $\diam(X)$, and every
		isometric embedding $\phi\colon  K \to X$, there exists an isometric embedding $\Phi\colon  L\to X$ that extends $\phi$. In the literature this is also called the \emph{one-point extension property}, since it is enough to check it for $L$ being a one-point extension of $K$.
	\end{enumerate}
\end{defin}

\begin{remark}\ 

Every (not necessarily separable) metric space with diameter $\leq 1$ can be isometrically embedded into a finitely injective ultrahomogeneous metric space (with the same topological weight) with diameter $\leq 1$ ; see Uspenskij \cite[Thm.~5.1]{Usp-sub}.
\end{remark}

\sk
 
 \begin{thm} \label{t:abstract-case}  
 	Let $(X,d)$ be a metric space and $G\leq\Iso(X)$ be a group of isometries, endowed with the topology of pointwise convergence. Suppose $(X,d)$ is finitely injective and approximately $G$-ultrahomogeneous. Then the greatest equivariant compactification of the $G$-space $X$ is the Gromov compactification of $(X,d)$.
 \end{thm}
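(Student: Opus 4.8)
The plan is to apply Theorem~\ref{t:coincidence}: it is enough to show that the $G$-action on $X$ is both (UWMT) and (MA). Fix $U\in N_e$; since $G$ carries the topology of pointwise convergence, we may fix a finite set $F_0=\{z_1,\dots,z_n\}\subseteq X$ and $\eps_0>0$ with $V:=\{g\in G:d(gz_i,z_i)<\eps_0\text{ for all }i\}\subseteq U$. Both properties are established along the same lines: one uses finite injectivity of $X$ to build a partial isometry of $X$ with prescribed approximate behaviour, and then approximate $G$-ultrahomogeneity to replace that partial isometry by an honest element of $G$.

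\emph{(MA).} Given in addition $\eps>0$, I would take $F:=F_0$ and $\delta:=\min(\eps/3,\diam X)$. Suppose $|d(z_i,x)-d(z_i,y)|<\delta$ for all $i$. If $d(x,y)<\eps$ take $g=e$; otherwise $d(z_i,x)+d(z_i,y)\ge d(x,y)\ge 3\delta$ for every $i$, so the abstract metric space on $F_0\cup\{y,w\}$ that carries the metric of $X$ on $F_0\cup\{y\}$ and has $d(w,z_i):=d(x,z_i)$ and $d(w,y):=\delta$ is well defined (the only nontrivial triangle inequality, on $(w,z_i,y)$, holds because $|d(x,z_i)-d(y,z_i)|<\delta\le d(x,z_i)+d(y,z_i)$) and has diameter $\le\diam X$. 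Finite injectivity then gives a point $w_0\in X$ with $d(w_0,z_i)=d(x,z_i)$ for all $i$ and $d(w_0,y)\le\delta$. The assignment that fixes $F_0$ and sends $x$ to $w_0$ is a partial isometry of $X$; applying approximate $G$-ultrahomogeneity to it with $\eta:=\min(\eps_0,\eps/3)$ yields $g\in G$ with $d(gz_i,z_i)<\eta\le\eps_0$, hence $g\in V\subseteq U$, and $d(gx,y)\le d(gx,w_0)+d(w_0,y)<\eta+\delta<\eps$.

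\emph{(UWMT).} I would take $\delta:=\eps_0$ and prove $B_{\eps_0}(x)\subseteq\ov{Ux}$ for every $x\in X$. Fix $y$ with $\delta':=d(x,y)<\eps_0$ and $\rho>0$, and write $a_i:=d(z_i,x)$, $b_i:=d(z_i,y)$, $D_{ij}:=d(z_i,z_j)$, so that $|a_i-b_i|\le\delta'\le a_i+b_i$. Form the set $L=\{z_1,\dots,z_n,y\}\sqcup\{z_1',\dots,z_n'\}$ with the metric of $X$ on the first block and, for the remaining distances,
\[
d(z_i',z_j'):=D_{ij},\qquad d(z_i',y):=a_i,\qquad d(z_i',z_j):=\max\bigl(|a_i-b_j|,\,\min(D_{ij},a_i+b_j)\bigr),
\]
that is, $d(z_i',z_j)$ is $D_{ij}$ clamped to the interval $[\,|a_i-b_j|,\,a_i+b_j\,]$. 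The crux is to check that $L$ is a metric space with $\diam L\le\diam X$; this is a finite verification over triples of points of $L$ that uses only the inequalities $|D_{ik}-D_{jk}|\le D_{ij}$, $|a_i-a_j|\le D_{ij}\le a_i+a_j$, $|b_i-b_j|\le D_{ij}\le b_i+b_j$ and $|a_i-b_i|\le\delta'$ (all valid in $X$), together with the fact that $(v,l,u)\mapsto\max(l,\min(v,u))$ is $1$-Lipschitz for $l\le u$. Note that $d(z_i',z_i)=\max(|a_i-b_i|,\min(0,a_i+b_i))=|a_i-b_i|\le\delta'<\eps_0$. By finite injectivity (realizing $L$ over its subspace $\{z_1,\dots,z_n,y\}$) there are $\hat z_1,\dots,\hat z_n\in X$ with $d(\hat z_i,\hat z_j)=D_{ij}$, $d(\hat z_i,y)=a_i=d(z_i,x)$ and $d(\hat z_i,z_i)<\eps_0$. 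Then $z_i\mapsto\hat z_i$, $x\mapsto y$ is a partial isometry of $X$; applying approximate $G$-ultrahomogeneity with $\eta:=\min\bigl(\rho,\ \eps_0-\max_i d(\hat z_i,z_i)\bigr)>0$ yields $g\in G$ with $d(gz_i,\hat z_i)<\eta$ and $d(gx,y)<\eta$. Then $d(gz_i,z_i)<\eta+\max_i d(\hat z_i,z_i)\le\eps_0$, so $g\in V\subseteq U$, while $d(gx,y)<\eta\le\rho$. As $\rho>0$ was arbitrary, $y\in\ov{Ux}$, as desired.

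Once both (UWMT) and (MA) are in place, Theorem~\ref{t:coincidence} gives $\beta_G=\g$ up to equivalence. The one genuine obstacle is the metric-space verification for $L$ in the (UWMT) step: conceptually it asserts that in a finitely injective space the (only approximately isometric) map fixing $F_0$ and sending $x$ to $y$ can be corrected to an exact partial isometry displacing each point of $F_0$ by at most $d(x,y)$. The explicit clamping formula turns this into a finite inspection of triangle inequalities, and all the other steps are routine manipulations of the constants and direct invocations of finite injectivity and approximate $G$-ultrahomogeneity.
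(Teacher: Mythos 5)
Your proof is correct and follows the same overall route as the paper: reduce to Theorem~\ref{t:coincidence} and verify (MA) and (UWMT) by combining finite injectivity with approximate $G$-ultrahomogeneity. Your (MA) step is essentially the paper's argument (a one-point extension placed at distance $\delta$ from one of the two points and mirroring the other's distances to $F$; you only swap the roles of $x$ and $y$ and use slightly different constants). The genuine difference is in the (UWMT) step: the paper prescribes $d(z_i,z_j')=\min\{d(z_i,z_j)+d(x,y),\diam(X)\}$ for the auxiliary extension, while you clamp this distance to the interval $[\,|a_i-b_j|,\,a_i+b_j\,]$ forced by the triangle inequalities through $y$. This is in fact a substantive improvement, not just a stylistic one: the paper's formula can violate the inequality $d(z_i',z_j)\leq d(z_i',y)+d(y,z_j)$ (take $x,y,z_i,z_j$ isometric to $0,\ 0.1,\ -1,\ 1$ in $\R$ inside a finitely injective space of diameter at least $2.1$, so that $\min\{D_{ij}+d(x,y),\diam X\}=2.1$ exceeds $a_i+b_j=1.9$), whereas your clamping enforces exactly the constraints that can fail. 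The ``finite verification'' you defer does go through: the difference-sides of all triangle inequalities follow from the $1$-Lipschitz property of the clamp together with the inequalities you list, and the sum-sides from a short case analysis --- if no upper clamp is active one uses $c_{ik}+c_{jk}\geq D_{ik}+D_{jk}\geq D_{ij}$, while if, say, $c_{jk}=a_j+b_k$ one combines it with the lower clamp $c_{ik}\geq|a_i-b_k|\geq a_i-b_k$ to get $c_{ik}+c_{jk}\geq a_i+a_j\geq D_{ij}$, and symmetrically with $b_j+b_k\geq D_{jk}$ for the triples of the form $\{z_i',z_j,z_k\}$. It would be worth writing out this case analysis explicitly, since it is the one place where your argument carries real content beyond the paper's.
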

 \begin{proof} 
 	By Theorem \ref{t:coincidence} it is enough to show that the natural action $G\actson X$ is (MA) and (UWMT). 
 	We first show that the action is (MA)  (Definition \ref{d:UWMT}). Let 
 	$$U=\{g \in G: d(ga,a) < \eps \ \forall a \in A\}\in N_e$$ be the \nbd determined by some $0<\eps<\diam(X)$ and some finite subset $A\subseteq X$.  It is enough to find $\delta >0$ and a finite subset $F \subseteq X$ such that  if 
 	\begin{itemize}[leftmargin=35pt]
 	\item [(a)] $|d(x,z)-d(y,z)| < \delta\ \ \forall z \in F$
 	\end{itemize} 
 	 then there exists $g \in G$ such that $d(gy,x) < \eps  \ \text{and} \ d(gz,z) < \eps \ \ \forall z \in F$.
 	
 	We choose $F=A$ and any $0<\delta < \eps/2$. Let $x,y\in X$ satisfy (a). We can suppose, in addition, that
 	\begin{itemize}[leftmargin=35pt]
 	\item [(b)] $\delta \leq d(x,z)+d(y,z) \ \ \forall z \in F$.
 	\end{itemize}
 	Indeed, otherwise $d(x,y) < \eps$ and we simply take $g=e$.
 	
 	\sk 
 	Let $K=F\cup\{x\}$. We consider an expansion $L=K\cup\{y'\}$ of the finite metric space $K$ by a new point $y'$ such that:
 	\begin{equation*}
 	\begin{cases}
 	d(y',x)=\delta\\
 	d(y',z)=d(y,z) \ \ \forall z\in F.
 	\end{cases}
 	\end{equation*}
To see that $L$ is a metric space, it suffices to check the triangle inequalities:
	$$
 	|d(y',x)-d(y',z)| \leq d(x,z) \leq d(y',x)+d(y',z) \ \ \ \forall z \in F. 
 	$$
The right-hand side inequality is equivalent to $d(x,z) - d(y,z) \leq  \delta$, which is true by our assumption (a). 
 	As to the left-hand side inequality, its one half is again just (a). The second half can be written as $\delta - d(y,z) \leq d(x,z)$ which is true by (b). 
 	
 	Since $(X,d)$ is finitely injective and $\diam(L)\leq\diam(X)$, we can assume that $y'$ belongs to $X$. Now, the map 
 	$$p\colon F \cup \{y\} \to F \cup \{y'\}, z \mapsto z, y \mapsto y'$$ is a partial isometry of $X$. Since $X$ is approximately $G$-ultrahomogeneous, there exists $g \in G$ that extends $p$ up to $\eps/2$.  Hence $d(gy,x) < d(gy,y')+d(y',x)<\epsilon/2+\delta< \eps$ and $d(gz,z)<\eps/2 \ \ \forall z \in F$, so $g$ is as desired.
 	
 	\sk \sk 
 	Next we prove that the action is (UWMT). We have to show that 
 	for every $\epsilon>0$ and finite subset $F \subseteq X$ there is $\delta>0$ such that for every $x,y \in X$ with $d(x,y)<\delta$ and every $\eta>0$ there exists $g \in G$ with the property that $d(gx,y)<\eta$ and $d(gz,z)<\epsilon$ for every $z\in F$. 
 	
 	We claim that we may choose $\delta=\eps/2.$ Indeed, 
 	let $F=\{z_1,\dots,z_k\}$. 
 	For convenience we denote $z_0=x$, $z'_0=y$. We expand the finite metric space $K= \{z_0,z'_0,z_1,\dots,z_k\}$ with $k$ new points $z_1',\dots,z_k'$, such that  
 	\begin{equation*}
 	\begin{cases}
 	d(z'_i,z'_j)=d(z_i,z_j)\\
 	d(z_i,z'_j)=\min\{d(z_i,z_j)+d(x,y), \diam(X)\} 
 	\end{cases}
 	\end{equation*}
 	for every $0\leq i,j\leq k$. This defines indeed a finite metric space $L$ of diameter less than $\diam(X)$ which contains $K$ as a metric subspace. Since $(X,d)$ is finitely injective, we may assume the points $z'_i$ exist in $X$.

 	Take $x,y\in X$ with $d(x,y)<\delta=\eps/2$ and let $\eta>0$. By approximate ultrahomogeneity, there exists  $g \in G$ which extends the partial isometry 
 	$$p\colon \{z_0,z_1,\dots,z_k\} \to \{z'_0,z_1',\dots,z_k'\}$$ up to $\min\{\eps/2,\eta\}$. In particular, we have $$d(gz_i,z_i)\leq d(gz_i,z'_i)+d(z'_i,z_i)<\eps$$ for all $i=1,\dots,k$. In addition, $d(gx,y) = d(gz_0,z_0')< \eta$. Hence $g$ is as required.
 \end{proof}	

\sk 

The \emph{Urysohn space}, $\mathbb{U}$, is the unique (up to isometry) Polish, finitely injective metric space of infinite diameter. It is ultrahomogeneous and universal for Polish metric spaces (i.e., every Polish metric space embeds in $\mathbb{U}$), and in fact is also characterized up to isometry by the conjunction of these two properties; see \cite{Usp-sub,Urys}. By a result of Uspenskij \cite{Usp90},
the isometry group $\Iso(\mathbb{U})$, endowed with the topology of pointwise convergence, is a universal Polish topological group.

The diameter 1 version of the Urysohn space, the \emph{Urysohn sphere}, $\mathbb{U}_1$, is the unique Polish, finitely injective metric space of diameter 1. It is also characterized by being ultrahomogeneous and universal for Polish (or just finite) metric spaces of diameter~1. Its isometry group $\Iso(\mathbb{U}_1)$, with the pointwise convergence topology, is a universal Polish group which is moreover Roelcke precompact; see \cite{Usp-sub}.

The Urysohn sphere and the Urysohn space are the main particular 
cases of Theorem~\ref{t:abstract-case} (because these metric spaces are finitely injective and ultrahomogeneous). 

\begin{thm} \label{t:Urys-sphere} 
	Let $\mathbb{U}_1$ be the Urysohn sphere.  
	Then the greatest equivariant compactification of the $G$-space $\mathbb{U}_1$ 	with $G=\Iso(\mathbb{U}_1)$ 	is the Gromov compactification of $\mathbb{U}_1$. In particular, $\beta_G(\mathbb{U}_1)$ is metrizable and the algebra $\RUC_G (\mathbb{U}_1)$ is just the closed algebra generated by the set 
$$\mathbb{U}_1^d=\{d(\cdot,z)\colon  \mathbb{U}_1 \to [0,1]: \  z \in \mathbb{U}_1\}$$
of elementary Kat\v{e}tov functions on $\mathbb{U}_1$. 
\end{thm}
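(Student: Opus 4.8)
The plan is to deduce the statement directly from Theorem~\ref{t:abstract-case}, whose hypotheses are essentially immediate for the Urysohn sphere. First I would recall that $\mathbb{U}_1$ is, by its very characterization, finitely injective (it has the one-point extension property for finite metric spaces of diameter $\leq 1$) and ultrahomogeneous. The only point worth noting is that ultrahomogeneity implies in particular approximate $G$-ultrahomogeneity for $G=\Iso(\mathbb{U}_1)$: given $\eps>0$ and a partial isometry $p\colon A\to B$ between finite subsets of $\mathbb{U}_1$, ultrahomogeneity provides an isometry $g\in G$ with $ga=pa$ for every $a\in A$, which trivially satisfies $d(pa,ga)<\eps$. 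Hence Theorem~\ref{t:abstract-case} applies and yields $\beta_G=\g$ up to equivalence for the $G$-space $\mathbb{U}_1$.

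Next I would read off the two ``in particular'' assertions. Since $\mathbb{U}_1$ is separable and has diameter $1$, the Gromov compactification $\g\mathbb{U}_1$ is a metrizable compactum (as observed after Definition~\ref{d:Gr-comp}, or via the identification of $\g\mathbb{U}_1$ with the closure of $\mathbb{U}_1^d$ inside the metrizable space $\operatorname{K}(\mathbb{U}_1)$ in Proposition~\ref{p:Gr-comp}(2.b)); being equivalent to $\beta_G\mathbb{U}_1$, the latter is metrizable as well. For the description of $\RUC_G(\mathbb{U}_1)$ I would invoke Proposition~\ref{p:Gr-comp}(2.a), where it is shown that $\A_{w,d}=\Gro(\mathbb{U}_1)$ and that $\g=\beta_G$ (up to equivalence) if and only if $\RUC_G(\mathbb{U}_1)$ is generated, as a closed unital algebra, by $\mathbb{U}_1^d$. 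Since we have just established the equivalence $\g=\beta_G$, it follows that $\RUC_G(\mathbb{U}_1)$ is exactly the closed algebra generated by the elementary Kat\v{e}tov functions $d(\cdot,z)$, $z\in\mathbb{U}_1$.

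There is no obstacle specific to this statement: the real work is concentrated in Theorem~\ref{t:abstract-case} (and, before that, in Theorem~\ref{t:coincidence}, which reduces the coincidence $\beta_G=\g$ to verifying properties (MA) and (UWMT)). If one wanted a self-contained argument here, one would re-run the proof of Theorem~\ref{t:abstract-case} in the concrete case of $\mathbb{U}_1$: the one-point extension property lets one realize inside $\mathbb{U}_1$ the auxiliary finite metric spaces $L\supseteq K$ built from the triangle constraints, and ultrahomogeneity then supplies the isometry $g\in G$ moving $y$ close to $x$ while fixing a finite control set $F$ approximately (the mechanism behind (MA)), with a similar small-ball construction giving (UWMT). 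But packaging everything through Theorem~\ref{t:abstract-case} reduces the proof to a one-line verification of hypotheses.
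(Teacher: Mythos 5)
Your proposal is correct and follows exactly the route the paper takes: the theorem is obtained as the special case of Theorem~\ref{t:abstract-case} in which $X=\mathbb{U}_1$ is finitely injective and (exactly, hence approximately) ultrahomogeneous, with the metrizability and the description of $\RUC_G(\mathbb{U}_1)$ read off from the remarks after Definition~\ref{d:Gr-comp} and from Proposition~\ref{p:Gr-comp}(2.a). Nothing is missing.
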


By Effros' theorem (see Section \ref{s:UMT}) the $G$-space $\mathbb{U}_1$ can be identified with the coset $G$-space $G/H$ 
where $H=\operatorname{St}(a)$ is a stabilizer subgroup.   
As a corollary of Theorem \ref{t:Urys-sphere} we get that every bounded right uniformly continuous function $f\colon  \mathbb{U}_1= G / H \to \R$ on the coset $G$-space $G/H$  can be uniformly approximated by linear combinations of finite products of elementary Kat\v{e}tov functions from $\mathbb{U}_1^d$ together with the constants. 

\sk 
\begin{thm} \label{t:Urys-space}
	Let $\mathbb{U}$ be the Urysohn space.  
	Then the greatest equivariant compactification of the $G$-space $\mathbb{U}$ 
	with $G=\Iso(\mathbb{U})$ 
	is the Gromov compactification of $\mathbb{U}$, which is not metrizable
	(by Remark \ref{r:nonMetr}).  
\end{thm}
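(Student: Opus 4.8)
The plan is to apply Theorem \ref{t:abstract-case} exactly as was done for the Urysohn sphere in Theorem \ref{t:Urys-sphere}, and then invoke Remark \ref{r:nonMetr} for the non-metrizability. So the first and essentially only substantive point to check is that the Urysohn space $\mathbb{U}$, with $G=\Iso(\mathbb{U})$ endowed with the topology of pointwise convergence, satisfies the hypotheses of Theorem \ref{t:abstract-case}: namely that $(\mathbb{U},d)$ is finitely injective and approximately $G$-ultrahomogeneous. Both are standard facts about the Urysohn space as recalled just before Theorem \ref{t:Urys-sphere}: finite injectivity (the one-point extension property for finite metric spaces of diameter $\leq\diam(\mathbb{U})=\infty$, which imposes no constraint) is part of the defining characterization of $\mathbb{U}$, and ultrahomogeneity --- hence a fortiori approximate $G$-ultrahomogeneity with $G=\Iso(\mathbb{U})$ --- is likewise part of that characterization (see \cite{Usp-sub,Urys}). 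Note that Theorem \ref{t:abstract-case} was stated and proved for an arbitrary metric space with no boundedness assumption, and its proof (via the (MA) and (UWMT) conditions of Theorem \ref{t:coincidence}) goes through verbatim using $\diam(X)=\infty$: in the one-point extension constructions one simply drops the truncations by $\diam(X)$, since every prescribed distance is automatically admissible. Hence Theorem \ref{t:abstract-case} yields directly that the greatest equivariant compactification $\beta_G\mathbb{U}$ coincides (up to equivalence) with the Gromov compactification $\g\mathbb{U}$ of $(\mathbb{U},d)$, the latter now understood in the sense of Definition \ref{d:Gr-comp} (the Samuel compactification of the $d$-weak uniformity $\U_{w,d}$), since the metric $d$ on $\mathbb{U}$ is unbounded.

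It remains to observe that this compactification is not metrizable. But $\mathbb{U}$ has infinite diameter, so its metric is unbounded, and Remark \ref{r:nonMetr} applies directly: the $d$-weak uniformity $\U_{w,d}$ is not totally bounded (it contains a uniformly discrete infinite subset --- for instance the image under $z\mapsto f_z$ of any sequence of points pairwise at distance $\geq 1$, which are pairwise $\geq 1$ apart in the pseudometric $d_z$), so its Samuel compactification $\g\mathbb{U}$ contains a copy of $\beta\N$ and is in particular non-metrizable. This completes the argument.

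I do not expect any genuine obstacle here: the theorem is an immediate instance of the already-proved Theorem \ref{t:abstract-case} together with Remark \ref{r:nonMetr}, the only thing worth a sentence being the remark that the finite-injectivity and ultrahomogeneity properties of $\mathbb{U}$ hold with $\diam=\infty$ and that the proof of Theorem \ref{t:abstract-case} was written to accommodate that case. If one wanted to be maximally careful, the single point deserving a line of verification is that the one-point extensions $L$ built in the proof of Theorem \ref{t:abstract-case} remain legitimate finite metric spaces when $\diam(X)=\infty$ --- which is immediate, as the triangle-inequality checks there do not use any upper bound on distances.
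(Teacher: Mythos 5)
Your proposal is correct and follows exactly the route the paper intends: the paper gives no separate proof for $\mathbb{U}$, deriving Theorem~\ref{t:Urys-space} from Theorem~\ref{t:abstract-case} (noting that $\mathbb{U}$ is finitely injective and ultrahomogeneous) together with Remark~\ref{r:nonMetr}, and your verification that the one-point-extension constructions in the proof of Theorem~\ref{t:abstract-case} remain valid when $\diam(X)=\infty$ is precisely the point worth checking. One small caveat on your parenthetical justification of Remark~\ref{r:nonMetr}: a set of points pairwise at $d$-distance $\geq 1$ need \emph{not} be uniformly discrete for $\U_{w,d}$, since a basic entourage of that uniformity only controls $d_z$ for finitely many base points $z$ (indeed, in $\mathbb{U}_1$ one has infinite $1$-separated sets while $\U_{w,d}$ is totally bounded); the correct witness is a sequence $(x_n)$ with $d(x_n,x_0)\to\infty$ and consecutive gaps $\geq 1$, which is uniformly discrete with respect to the single pseudometric $d_{x_0}$.
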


\sk 

We end this section with an observation about arbitrary equivariant compactifications of Urysohn-like spaces.

\begin{prop}\label{p:injective}
Let $(X,d)$ and $G$ be as in Theorem~\ref{t:abstract-case}. Then for every non-trivial $G$-equivariant compactification $\nu\colon X\to K$, the map $\nu$ is injective.
\end{prop}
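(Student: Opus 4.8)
The plan is to establish the contrapositive: if $\nu$ is \emph{not} injective, then $\nu X$ is a singleton, i.e.\ $\nu$ is trivial. So assume $\nu(x_0)=\nu(y_0)$ for some pair $x_0\neq y_0$ in $X$, and set $r:=d(x_0,y_0)>0$.

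\textbf{Step 1.} First I would show that $\nu(x)=\nu(y)$ for every pair $x,y$ with $d(x,y)=r$. For such a pair (note $x\neq y$, as $r>0$), the map $x_0\mapsto x$, $y_0\mapsto y$ is a partial isometry between the finite sets $\{x_0,y_0\}$ and $\{x,y\}$. By approximate $G$-ultrahomogeneity, for each $m\in\N$ there is $g_m\in G$ with $d(g_mx_0,x)<1/m$ and $d(g_my_0,y)<1/m$, so $g_mx_0\to x$ and $g_my_0\to y$ in $X$. Continuity of $\nu$ gives $\nu(g_mx_0)\to\nu(x)$ and $\nu(g_my_0)\to\nu(y)$, while $G$-equivariance yields $\nu(g_mx_0)=g_m\nu(x_0)=g_m\nu(y_0)=\nu(g_my_0)$ for all $m$. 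As $K$ is Hausdorff, $\nu(x)=\nu(y)$.

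\textbf{Step 2.} Next I would use finite injectivity to spread this equality over all of $X\times X$. If $0<d(x,y)\leq 2r$, then the three-point set $L=\{x,y,z\}$ with $d(x,z)=d(y,z)=r$ is a metric space (the triangle inequalities hold) of diameter $\leq\diam(X)$; applying finite injectivity to the inclusion $\{x,y\}\hookrightarrow X$ we may assume $z\in X$, whence $\nu(x)=\nu(z)=\nu(y)$ by Step 1. Finally, given arbitrary $a\neq b$ in $X$, put $D:=d(a,b)$, choose an integer $n\geq D/(2r)$, and equip $\{c_0,\dots,c_n\}$ (with $c_0=a$, $c_n=b$) with the metric $d(c_i,c_j)=\tfrac{D}{n}\abs{i-j}$; this is a metric space of diameter $D\leq\diam(X)$ extending $\{a,b\}$, so by finite injectivity we may take all $c_i$ in $X$. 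Consecutive points are at distance $D/n\leq 2r$, hence $\nu(c_i)=\nu(c_{i+1})$ for each $i$, and therefore $\nu(a)=\nu(b)$. Thus $\nu$ is constant; since $\nu X$ is dense in $K$, the space $K$ is a singleton, contradicting non-triviality. Hence $\nu$ is injective.

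The step I expect to require care is Step 1, where only \emph{approximate} images of $(x_0,y_0)$ under $G$ are available: the equality $\nu(x)=\nu(y)$ is not witnessed by a single group element but obtained in the limit, which is legitimate precisely because $\nu$ is continuous and $K$ is Hausdorff. The remaining steps are elementary: one only verifies the triangle inequalities for the two auxiliary finite metric spaces and that their diameters do not exceed $\diam(X)$, so that finite injectivity applies.
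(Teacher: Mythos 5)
Your proof is correct and follows essentially the same strategy as the paper's: use approximate $G$-ultrahomogeneity, equivariance, and continuity of $\nu$ to propagate the identification to all pairs at distance $r$, then use finite injectivity to build chains connecting arbitrary pairs. The only cosmetic difference is in the chaining step (the paper uses chains with consecutive distances exactly $r$ and re-invokes the homogeneity argument at distance $s$, while you pass through an auxiliary point to reach distance $2r$ and then use a linear chain anchored at the two given points), which changes nothing essential.
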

\begin{proof}
Let $\nu\colon X\to K$ be a $G$-equivariant compactification, and suppose there are $x,y\in\mathbb{U}_1$ with $d(x,y)=r>0$ and $\nu(x)=\nu(y)$. By approximate $G$-ultrahomogeneity and equivariance, we have $\nu(x')=\nu(y')$ for every two points $x',y'\in \mathbb{U}_1$ at distance $r$. Now let $0\leq s\leq \diam(X)$. It is easy to construct a finite metric space $\{x_0,\dots,x_n\}$ such that $d(x_i,x_{i+1})=r$ for each $i<n$ and $d(x_n,x_0)=s$. By finite injectivity, we may assume that the $x_i$ are elements of $X$. It follows that $\nu(x_i)=\nu(x_{i+1})$ for each $i<n$, and thus $\nu(x_0)=\nu(x_n)$. As before, this implies that any two points at distance $s$ have equal image under $\nu$. We conclude that the compactification is trivial.
\end{proof}

On the other hand, there are \emph{non-proper} compactifications of $\mathbb{U}_1$; see Example~\ref{ex:6Examples}.4.

\begin{question}\ 

Describe all $G$-compactifications of the Urysohn sphere $\mathbb{U}_1$, where $G=\Iso(\mathbb{U}_1)$. 
\end{question}

\noindent\hrulefill

\sk
\section{A common approach for separably categorical structures}\label{s:Models}
\sk

In this section we work in the setting of continuous logic, as presented in \cite{BenUsv-local-stability} or~\cite{BY-et-al}. For ease of exposition we will consider only \emph{relational} languages, but everything below holds in the general case. A~\emph{metric structure} is thus a complete, bounded metric space $M$ together with distinguished (bounded, uniformly continuous) real-valued predicates---which are the interpretations for the symbols of the given language. The \emph{definable predicates} of the structure $M$---which correspond to interpretations of \emph{formulas}---are functions $M^n\to\mathbb R$ obtained from the basic predicates and the metric through continuous combinations, quantification (i.e., taking suprema or infima with respect to given variables) and uniform limits. We allow the definable predicates to depend on infinitely many variables, that is, $n\leq\omega$. The formalism of \cite{BenUsv-local-stability} requires that $M$ has diameter 1 and that all definable predicates take values in the interval $[0,1]$; but this is not necessary, and for our purposes we prefer the formalism of \cite{BY-et-al}, which is more general in this respect. On the other hand, a definable predicate of $M$ is always bounded and uniformly continuous with respect to the product uniformity on $M^n$ (which is the one induced, for instance, by the distance $d(x,y)=\sum_{i<n}2^{-i}d(x_i,y_i)$).

Let $M$ be a metric structure in this sense, and let $G=\Aut(M)$ be its automorphism group, i.e., the group of invertible isometries of $M$ that preserve the distinguished predicates. We endow $G$ with the topology of pointwise convergence induced by its natural action on $M$. Thus $G$ acts continuously on $M$, and also on the powers $M^n$ via the diagonal action. If $\varphi\colon M^n\to\mathbb R$ is any definable predicate, then $\varphi(ga)=\varphi(a)$ for every $g\in\Aut(G)$ and $a\in M^n$.

Henceforth we see $M$ as an isometric $G$-space. We consider the following closed $G$-invariant subalgebras of $\RUC_G(M)$:
\begin{itemize}[leftmargin=15pt]
\item The algebra $\Gro(M)$ generated by the functions $x\mapsto d(x,b)$ for $b\in M$ (we recall that the metric on $M$ is bounded).

\item The algebra $\Def(M)$ of functions $x\mapsto\varphi(x,b)$ where $\varphi\colon M\times M^n\to\mathbb R$ is a definable predicate and $b\in M^n$ is any tuple ($n \leq\omega$).

\item The algebra $\RUC^u_G(M)$ of right uniformly continuous functions on $M$ that are moreover uniformly continuous with respect to the metric of $M$.
\end{itemize}
As is easy to see, we always have the inclusions:
$$\Gro(M)\subseteq \Def(M)\subseteq \RUC^u_G(M)\subseteq \RUC_G(M).$$
Hence we can refine the question of whether $\g^M=\beta_G^M$ (i.e., $\Gro(M)=\RUC_G(M)$) for a given structure $M$ by asking whether the equality holds in each of the former inclusions. Moreover, a number of model-theoretic tools are available for the description of the algebra $\Def(M)$ in many concrete, interesting cases.

This analysis is particularly useful in the case of \emph{separably categorical} structures. We recall that $M$ is separably categorical (or \emph{$\aleph_0$-categorical}) if it is the unique separable model of its first-order theory, up to isomorphism. When considering $\aleph_0$-categorical structures we always assume that the language is countable (possibly up to interdefinability). The following was observed in \cite[Prop.~1.7]{ibaDyn}.

\begin{prop}\label{p:DEF=RUCu}
If $M$ is $\aleph_0$-categorical, then $\Def(M)= \RUC^u_G(M)$.
\end{prop}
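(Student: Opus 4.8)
The plan is to prove the two inclusions $\Def(M)\subseteq\RUC^u_G(M)$ and $\RUC^u_G(M)\subseteq\Def(M)$ separately, the first being essentially immediate and the second being the substantive direction. For the first inclusion, observe that any definable predicate $\varphi\colon M\times M^n\to\mathbb R$ is by definition bounded and uniformly continuous for the product uniformity, and is $\Aut(M)$-invariant; hence for fixed $b\in M^n$ the function $x\mapsto\varphi(x,b)$ is bounded, uniformly continuous in the metric of $M$, and the $\Aut(M)$-invariance of $\varphi$ together with the uniform continuity gives the RUC property (a small move of $x$ by $g\in\Aut(M)$ changes $\varphi(x,b)=\varphi(g^{-1}x,g^{-1}b)$ by a controlled amount once $g^{-1}b$ is close to $b$, uniformly in $x$). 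So the content is the reverse inclusion.

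For $\RUC^u_G(M)\subseteq\Def(M)$, the key is the characterization of definable predicates via continuity and invariance that is available for $\aleph_0$-categorical structures. Concretely, I would use the following standard fact (a consequence of the Ryll-Nardzewski–type theorem in continuous logic, as in \cite{bentsa}): a bounded function $\psi\colon M^n\to\mathbb R$ is a definable predicate if and only if it is uniformly continuous in the metric of $M^n$ and invariant under $\Aut(M)$. Granting this, let $f\in\RUC^u_G(M)$. Being RUC and $G=\Aut(M)$-equivariantly a function on $M$ with $gf(x)=f(g^{-1}x)$, the relevant point is that the RUC condition forces $f$ to be "approximately invariant'' in a strong enough sense; but more directly, one works with $f$ as a function on $M$ together with finitely many fixed parameters from $M$. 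The idea is that since $f$ is RUC, for each $\varepsilon>0$ there is a neighborhood $V$ of $e$ in $\Aut(M)$, which by definition of the pointwise-convergence topology is determined by a finite tuple $b\in M^n$ and some $\delta>0$ (namely $V=\{g: d(gb_i,b_i)<\delta\}$), such that $|f(gx)-f(x)|<\varepsilon$ for all $x$ and all $g\in V$. This means $f$ is, up to $\varepsilon$, a function of $x$ and the "type'' of $x$ over $b$; approximating $f$ by such functions and then invoking that a uniformly continuous, $\Aut(M/b)$-invariant function on $M$ is a definable predicate over the parameters $b$ (again by $\aleph_0$-categoricity, now of the structure $M$ with finitely many constants added, which remains $\aleph_0$-categorical), one exhibits $f$ as a uniform limit of functions of the form $x\mapsto\varphi(x,b)$, hence $f\in\Def(M)$.

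The main obstacle I anticipate is making the passage from "approximately invariant under a small subgroup'' to "genuinely a definable predicate over finitely many parameters'' fully rigorous: one needs to check that the relevant subgroup $V$ can be taken to be (or contains) the pointwise stabilizer of a finite tuple, that adding those finitely many parameters as constants preserves $\aleph_0$-categoricity, and that the uniform continuity of $f$ in the metric of $M$ (which is exactly the extra hypothesis distinguishing $\RUC^u_G(M)$ from $\RUC_G(M)$) transfers to the approximating functions so that the model-theoretic criterion for definability applies. All of this is routine in the continuous-logic framework of \cite{bentsa}, and indeed the whole statement is attributed to \cite[Prop.~1.7]{ibaDyn}; I would simply assemble these ingredients rather than reprove the Ryll-Nardzewski machinery. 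A cleaner alternative, if one prefers, is to argue entirely on the level of the greatest ambit: $\RUC^u_G(M)$ corresponds to the maximal equivariant compactification of $M$ within the category of spaces with a uniformly continuous $G$-action, and the $\aleph_0$-categoricity identifies this with the type space $S_1(M)$, whose algebra of continuous functions is exactly $\Def(M)$; but the parameter-by-parameter approximation argument above is more self-contained.
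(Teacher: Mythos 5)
The paper does not actually prove this proposition; it is quoted from \cite[Prop.~1.7]{ibaDyn}, whose argument goes through the identification of $\RUC^u_G(M)$ with the Roelcke uniformly continuous functions on $G$ and the Ben Yaacov--Tsankov representation of those as $g\mapsto\varphi(a,gb)$. Your reconstruction is correct and takes a slightly more direct route: both inclusions are argued soundly, and the checks you flag (a basic neighborhood $V$ contains the pointwise stabilizer $\operatorname{St}(b)$ of a finite tuple; $(M,b)$ remains $\aleph_0$-categorical; uniform continuity passes to the approximants) are exactly the right ones and are indeed routine. The one step you leave implicit deserves a sentence: from ``$|f(gx)-f(x)|<\eps$ for all $g\in\operatorname{St}(b)$'' you need an honestly $\operatorname{St}(b)$-invariant approximant, and the standard device is $\tilde f(x)=\sup_{g\in\operatorname{St}(b)}f(gx)$, which is $\operatorname{St}(b)$-invariant, within $\eps$ of $f$, and uniformly continuous with the same modulus as $f$ precisely because the action is isometric; the continuous Ryll--Nardzewski criterion (uniformly continuous $+$ $\operatorname{Aut}(M/b)$-invariant $\Rightarrow$ definable over $b$, via compactness of $M\sslash\operatorname{Aut}(M/b)$) then applies, and $f$ is a uniform limit of predicates $\varphi_\eps(x,b_\eps)$, hence in $\Def(M)$ after concatenating the countably many finite parameter tuples into one $\omega$-tuple. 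With that insertion your argument is complete and is a legitimate alternative to simply citing \cite{ibaDyn}.
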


This result was based on the ideas of \cite{bentsa}, where it is implicitly shown that in the $\aleph_0$-categorical setting every bounded \emph{Roelcke uniformly continuous} function $f\colon G\to\mathbb{R}$ (see Section~\ref{s:GromovComp}) can be represented in the form $f(g)=\varphi(a,gb)$ for an appropriate formula $\varphi(x,y)$ and tuples $a,b$ (possibly infinite). (See also \cite[Prop.~1.8]{ibaDyn}.) We denote by $\UC(G)$ the algebra of bounded Roelcke uniformly continuous functions on~$G$, and we recall that $f\in\UC(G)$ if and only if $f$ is both left and right uniformly continuous, i.e., $\UC(G)=\LUC(G)\cap\RUC(G)$. Conversely, every function of the form $f(g)=\varphi(a,gb)$ is in $\UC(G)$. The algebra $\UC(G)$ is in general strictly contained in $\RUC(G)$ (unless $G$ is a \emph{SIN group}).

On the other hand, suppose $G$ is any first-countable topological group, fix a left-invariant metric $d_L$, and let $L$ be the completion of $G$ with respect to $d_L$. Then, by choosing an appropriate language, one can see $L$ as a metric structure in such a way that $G=\Aut(L)$; see~\cite[\textsection 3]{Mel-hjorth}. When we see $L$ as an isometric $G$-space via the natural left action $G\actson L$, we have a canonical isomorphism between the algebras $\RUC_G^u(L)$ and $\UC(G)$ (the restriction map). This suggests that, for a general structure $M$, the algebra $\RUC_G^u(M)$ is in some sense an analogue of the algebra $\UC(G)$.

From this viewpoint, the conclusion in Proposition~\ref{p:DEF=RUCu} seems natural, and the question was not considered in \cite{ibaDyn} as to whether it was optimal. However, as we observe next, one can actually prove that $\Def(M)=\RUC_G(M)$.

We recall from \cite{bentsa} that $M$ being separably categorical is equivalent  to saying that the action $G\actson M$ is \emph{approximately oligomorphic}, which means that the quotients $M^n{\sslash}G$ are compact for each $n<\omega$ (equivalently, for $n=\omega$). Here, the quotient $M^n{\sslash}G$ is the space of closed orbits $\{\ov{Ga}:a\in M^n\}$ endowed with the metric
$$d(\ov{Ga},\ov{Gb})=\inf_{g\in G}d(a,gb),$$
where we have fixed beforehand some $G$-invariant compatible metric $d$ on $M^n$ (such as the one mentioned earlier). Following model-theoretic terminology, we call the closed orbit $\ov{Ga}$ of a tuple $a\in M^n$ the \emph{type (over $\emptyset$)} of $a$, and we denote it by $\tp(a)$.

\sk 
\begin{prop}\label{p:RUCu=RUC}
If $M$ is $\aleph_0$-categorical, then $\RUC^u_G(M) =\RUC_G(M)$.
\end{prop}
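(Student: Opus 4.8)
The plan is to show that every $f\in\RUC_G(M)$ is automatically uniformly continuous with respect to the metric of $M$, so that $\RUC_G(M)\subseteq\RUC^u_G(M)$ and the desired equality follows (the reverse inclusion being trivial). The key point to exploit is approximate oligomorphicity: since $M$ is $\aleph_0$-categorical, the orbit space $M{\sslash}G$ is \emph{compact} for the metric $d(\ov{Ga},\ov{Gb})=\inf_{g\in G}d(a,gb)$. I want to use this compactness, together with the definition of an RUC function, to produce a single modulus of continuity that works uniformly over all of $M$.

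The main steps, in order, would be as follows. Fix $f\in\RUC_G(M)$ and $\eps>0$. First, choose (by right uniform continuity of $f$) a neighborhood $V\in N_e$ such that $\sup_{x\in M}|f(vx)-f(x)|<\eps/2$ for all $v\in V$; we may take $V$ of the basic form $\{g\in G:d(ga_i,a_i)<\eta\text{ for }i\le k\}$ for some finite tuple $a=(a_1,\dots,a_k)$ and some $\eta>0$. Second, I would argue that there is $\delta>0$ such that whenever $x,y\in M$ satisfy $d(x,y)<\delta$, one can find $g\in G$ with $g$ moving $x$ to within $\eps$-worth of $y$ while moving the tuple $a$ only slightly — more precisely, such that $gx$ is close to $y$ and $g\in V$. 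The natural mechanism here is a compactness/back-and-forth argument on types: the condition ``$\tp(a,x)$ is close to $\tp(a,y)$'' (in the compact metric $d$ on $M^{k+1}{\sslash}G$) is implied by $d(x,y)$ being small (with the $a$-coordinates fixed), and by compactness of $M^{k+1}{\sslash}G$ the map $x\mapsto\tp(a,x)$ has a uniform modulus; then ultrahomogeneity in the monster model, or rather the homogeneity of separably categorical structures (every type over a finite tuple is realized, and automorphisms over approximations exist), lets us convert closeness of $\tp(a,x)$ and $\tp(a,y)$ into an element $g\in\Aut(M)$ with $d(ga_i,a_i)<\eta$ and $d(gx,y)$ small. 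Third, with such a $g\in V$ in hand, estimate
$$|f(x)-f(y)|\le|f(x)-f(g^{-1}y)|+|f(g^{-1}y)-f(y)|,$$
where the second term is $<\eps/2$ because $g\in V$ (and $V$ may be taken symmetric), and the first term is small by continuity of $f$ at the point $x$ since $g^{-1}y$ is close to $x$. Letting $\delta\to 0$ and $\eps$ be arbitrary yields that $f$ is uniformly continuous on $M$.

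The step I expect to be the main obstacle is the second one: producing the automorphism $g$ that simultaneously stays in the prescribed neighborhood $V$ and moves $x$ close to $y$, uniformly in $x,y$ with $d(x,y)<\delta$. This is essentially a uniform homogeneity statement for $\aleph_0$-categorical structures, and the right way to get the \emph{uniformity} (a single $\delta$ working for all $x,y$) is to pass through the compact type space: the point is that $d\big(\tp(a,x),\tp(a,y)\big)\to 0$ as $d(x,y)\to 0$ uniformly, which is exactly where approximate oligomorphicity — compactness of $M^{k+1}{\sslash}G$ — is used, and then separable categoricity guarantees that small distance between types $\tp(a,x)$ and $\tp(a,y)$ is witnessed by an automorphism fixing $a$ approximately and sending $x$ approximately to $y$. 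Care is needed because $a$ may be an infinite tuple and because one is working with approximate rather than exact realizations, but these are the standard technical points of the Ben Yaacov–Tsankov framework and can be handled as in \cite{bentsa}; alternatively one can reduce to finite tuples using that $V$ only constrains finitely many coordinates. Once this uniform-homogeneity lemma is in place, the triangle-inequality estimate in step three is routine.
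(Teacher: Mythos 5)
Your overall strategy (reduce to showing $\RUC_G(M)\subseteq\RUC^u_G(M)$, use the RUC neighborhood $V$ determined by a finite tuple, and find $g\in V$ carrying $y$ close to $x$) starts out along the same lines as the paper, but the final estimate has a genuine gap, and it occurs at the step you call ``routine'' rather than at the one you flag as the main obstacle. Your step two is in fact essentially immediate: with the quotient metric $d(\ov{Ga},\ov{Gb})=\inf_{g\in G}d(a,gb)$ on $M^{1+k}{\sslash}G$, the map $x\mapsto\tp(a,x)$ is $1$-Lipschitz (take $g=e$), and closeness of $\tp(a,x)$ and $\tp(a,y)$ is \emph{by definition} witnessed by some $g\in G$ with $d(ga,a)$ and $d(gy,x)$ small; no compactness or monster-model homogeneity is needed there. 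The problem is step three: you bound $|f(x)-f(g^{-1}y)|$ ``by continuity of $f$ at the point $x$''. The modulus of continuity of $f$ at $x$ depends on $x$, and you need this bound uniformly over all $x\in M$ --- which is precisely the uniform continuity you are trying to prove. Worse, your construction only guarantees $d(x,g^{-1}y)\lesssim d(x,y)<\delta$, so you have merely replaced the task of bounding $|f(x)-f(y)|$ for $d(x,y)<\delta$ by the identical task of bounding $|f(x)-f(g^{-1}y)|$ for $d(x,g^{-1}y)<\delta$. The argument is circular.

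The paper escapes this by arguing by contradiction and using compactness of the type space where you do not. Suppose $f$ is not $d$-uniformly continuous and take sequences $a_n,b_n$ with $d(a_n,b_n)\to 0$ and $|f(a_n)-f(b_n)|\geq\eps$. By compactness of $M^{1+k}{\sslash}G$ (this is where $\aleph_0$-categoricity genuinely enters), after passing to a subsequence the types $\tp(a_nm)$ and $\tp(b_nm)$ converge to a common type $p$, which is realized by a fixed tuple $cm'$ in $M$ with $d(m,m')<\delta/2$. One then applies pointwise continuity of $f$ only at the single fixed point $c$: for large $n$ there are $g,h$ in the RUC neighborhood with $ga_n$ and $hb_n$ both within the $c$-modulus of $c$, whence $|f(a_n)-f(b_n)|<\eps$, a contradiction. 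If you replace your direct estimate by this sequential compactness argument, your proof goes through.
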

\begin{proof}
Suppose there is $f\in\RUC_G(M)\setminus\RUC_G^u(M)$. Then there exist $\epsilon>0$ and sequences $a_n,b_n\in M$ such that $d(a_n,b_n)\to 0$ and $|f(a_n)-f(b_n)|\geq\epsilon$ for every $n$. Choose an open neighborhood $1\in U\subseteq G$ such that $\sup_{x\in M}|f(gx)-f(x)|<\epsilon/4$ for every $g\in U$. We may assume that $U=\{g\in G:d(gm,m)<\delta\}$ for some finite tuple $m\in M^k$ and some $\delta>0$.

Consider the types $\tp(a_n m)$ and $\tp(b_n m)$ in $M^{1+k}{\sslash}G$. Up to passing to some subsequences, we may assume that they converge to some types $p$ and $q$, respectively. Moreover, since $d(a_n,b_n)\to 0$, we have $p=q$. Now take $c\in M$ and $m'\in M^k$ such that $p=\tp(cm')$. Since $\tp(m')=\tp(m)$, we may assume that $d(m,m')<\delta/2$.

Since $f$ is continuous, there is $\eta>0$ such that $|f(c')-f(c)|<\epsilon/4$ whenever $d(c',c)<\eta$. We may assume that $\eta<\delta/2$. Take $n$ such that $d(\tp(a_n m),p)<\eta$ and $d(\tp(b_n m),p)<\eta$. Thus there exist $g,h\in G$ satisfying $d(g(a_n m),c m')<\eta$ and $d(h(b_n m),cm')<\eta$. In particular, $d(gm,m)<d(gm,m')+d(m',m)<\delta$, so $g\in U$. It follows that $|f(a_n)-f(ga_n)|<\epsilon/4$. Similarly, $|f(b_n)-f(hb_n)|<\epsilon/4$. But then
\begin{align*}
|f(a_n)-f(b_n)| & \leq |f(ga_n)-f(hb_n)|+\epsilon/2 \\
& \leq |f(ga_n)-f(c)|+|f(c)-f(hb_n)|+\epsilon/2 \\
& <\epsilon,
\end{align*}
contradicting that $|f(a_n)-f(b_n)|\geq\epsilon$.
\end{proof}

Let us recall also from \cite{bentsa} that the automorphism groups of separably categorical structures are precisely, up to isomorphism, the \emph{Roelcke precompact} Polish groups, i.e., those Polish groups for which the Roelcke uniformity is totally bounded. In fact, if $G$ is Polish and Roelcke precompact, the action of $G$ on its left completion $L$ is approximately oligomorphic. Hence, when seen as a metric structure, $L$ is indeed separably categorical. We derive the following.

\begin{cor}
Let $G$ be a Roelcke precompact Polish group and let $L$ be its left-completion. Then $\RUC^u_G(L)=\RUC_G(L)$, and so $\UC(G)\simeq\RUC_G(L)$.
\end{cor}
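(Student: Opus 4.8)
The plan is to derive the corollary directly from the preceding two propositions together with the results of \cite{bentsa}. First I would recall that, by the theorem of Ben Yaacov and Tsankov, a Polish group $G$ is Roelcke precompact if and only if it is (topologically isomorphic to) the automorphism group of some separably categorical metric structure; moreover, the specific structure we need is already at hand. Namely, fix a left-invariant compatible metric $d_L$ on $G$ and let $L$ be the completion of $(G,d_L)$; by \cite[\textsection 3]{Mel-hjorth} one may choose a countable language making $L$ a metric structure with $\Aut(L)=G$, and the left-translation action $G\actson L$ is the natural one. The point to invoke is that, since $G$ is Roelcke precompact, this action is approximately oligomorphic, so $L$ is $\aleph_0$-categorical.

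Once that is in place, the rest is immediate. Applying Proposition~\ref{p:DEF=RUCu} to $M=L$ gives $\Def(L)=\RUC^u_G(L)$, and applying Proposition~\ref{p:RUCu=RUC} to $M=L$ gives $\RUC^u_G(L)=\RUC_G(L)$; chaining these yields $\RUC^u_G(L)=\RUC_G(L)$, which is the first assertion. For the second assertion, I would recall from the discussion immediately preceding Proposition~\ref{p:RUCu=RUC} (following \cite{Mel-hjorth}) that the restriction map $f\mapsto f|_G$ is a canonical isomorphism of Banach algebras between $\RUC^u_G(L)$ and $\UC(G)$: a bounded function on $L$ that is both $d_L$-uniformly continuous and right $G$-uniformly continuous restricts to a function on the dense subset $G\subseteq L$ that is both left and right uniformly continuous, and conversely every element of $\UC(G)=\LUC(G)\cap\RUC(G)$ extends uniquely (by left uniform continuity) to an element of $\RUC^u_G(L)$. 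Combining this isomorphism with the equality $\RUC^u_G(L)=\RUC_G(L)$ just obtained gives $\UC(G)\simeq\RUC_G(L)$.

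The main (indeed only) obstacle is purely one of bookkeeping: making sure that the structure $L$ one feeds into Propositions~\ref{p:DEF=RUCu} and~\ref{p:RUCu=RUC} genuinely satisfies their hypothesis of $\aleph_0$-categoricity, i.e., that the left completion of a Roelcke precompact Polish group, viewed as a metric structure, is separably categorical. This is exactly the content of the sentence ``if $G$ is Polish and Roelcke precompact, the action of $G$ on its left completion $L$ is approximately oligomorphic'' recalled from \cite{bentsa} just before the corollary, so no new work is required beyond citing it. Everything else is a formal concatenation of already-established isomorphisms and equalities.
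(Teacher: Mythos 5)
Your proof is correct and follows essentially the same route as the paper: the paper derives the corollary immediately from the fact (recalled from \cite{bentsa}) that the action $G\actson L$ is approximately oligomorphic, so $L$ is separably categorical, whence Proposition~\ref{p:RUCu=RUC} applies, combined with the canonical restriction isomorphism $\RUC^u_G(L)\simeq\UC(G)$ noted earlier in the section. Your additional spelling-out of why restriction is an isomorphism is accurate but not needed beyond what the paper already asserts.
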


\sk 
The Stone--Gelfand dual of the algebra $\Def(M)$ is known as the \emph{space of 1-types over $M$}, and is denoted by $S_1(M)$. This is the maximal ideal space of the algebra $\Def(M)$, with its canonical compact topology. A maximal ideal (or \emph{1-type}) $p\in S_1(M)$ can also be seen as a function $p\colon \Def(M)\to\R$ that maps each $f\in\Def(M)$ to the unique real number $p(f)$ such that $f-p(f)\in p$. From the logical point of view, a 1-type is a maximal satisfiable set of conditions in a single variable (see, e.g., \cite[\textsection 3]{BenUsv-local-stability}). Indeed, every type $p\in S_1(M)$ can be \emph{realized} in an appropriate elementary extension $M'$ of $M$, in the sense that there is $a'\in M'$ such that for every $f\in\Def(M)$, say given by $f(x)=\varphi(x,b)$, we have $p(f)=\varphi(a',b)$ (as calculated in the extension $M'$). In that case we say that $p$ is the type of $a'$ over $M$, and we write $p=\tp(a'/M)$.

We have proved:

\begin{thm} \label{t:types} 
Every $\aleph_0$-categorical structure $M$ satisfies 
 $\Def(M)=\RUC_G(M)$. Thus the maximal equivariant compactification of the system $\Aut(M)\actson M$ can be identified with the space $S_1(M)$ of 1-types over $M$, and is in particular metrizable.
\end{thm}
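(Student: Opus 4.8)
The plan is to chain together the two preceding propositions and then unwind the relevant definitions. For an $\aleph_0$-categorical structure $M$ with $G=\Aut(M)$, Proposition~\ref{p:DEF=RUCu} gives $\Def(M)=\RUC^u_G(M)$ and Proposition~\ref{p:RUCu=RUC} gives $\RUC^u_G(M)=\RUC_G(M)$, whence
$$\Gro(M)\subseteq\Def(M)=\RUC^u_G(M)=\RUC_G(M);$$
in particular $\Def(M)=\RUC_G(M)$. By the correspondence recalled in the introduction, the algebra $\RUC_G(M)$ corresponds to the greatest equivariant compactification $\beta_G\colon M\to\beta_G M$, which is therefore the Stone--Gelfand dual of $\RUC_G(M)=\Def(M)$. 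Since that dual is by definition the type space $S_1(M)$, we obtain a $G$-equivariant homeomorphism $\beta_G M\cong S_1(M)$ under which $\beta_G$ becomes the canonical map $a\mapsto\tp(a/M)$.

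It remains to see that $S_1(M)$ is metrizable, for which it suffices to show that $\Def(M)$ is separable as a Banach algebra. Here I would use that the language is countable and $M$ is separable. Fix a countable dense subset $D\subseteq M^\omega$, and fix a countable set $\Phi$ of formulas such that every definable predicate of $M$ is a uniform limit of members of $\Phi$ (such a $\Phi$ exists since the language is countable). Each $\psi\in\Phi$ involves only finitely many variables, so it is meaningful to evaluate $\psi$ at a single variable $x$ together with finitely many coordinates of an element of $D$; the resulting countable set $S=\{x\mapsto\psi(x,d):\psi\in\Phi,\ d\in D\}\subseteq\Def(M)$ is dense. Indeed, given $f\in\Def(M)$, say $f(x)=\varphi(x,b)$, one first approximates $\varphi$ uniformly by some $\psi\in\Phi$ (which then uses only finitely many of the parameter variables), and then uses the uniform continuity of $\psi$ on the relevant power $M^{1+k}$ to replace $b$ by a nearby tuple from $D$. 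Hence $\Def(M)$ is separable; since a compact Hausdorff space $K$ with $C(K)$ separable is metrizable, $\beta_G M=S_1(M)$ is metrizable.

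I do not expect a genuine obstacle at this stage: the substantive work has already been carried out in Proposition~\ref{p:RUCu=RUC}, whose proof exploits the compactness of the quotients $M^{1+k}{\sslash}G$ (equivalently, approximate oligomorphy) in order to drop the uniform-continuity hypothesis from Proposition~\ref{p:DEF=RUCu}. The only points in the write-up that deserve a moment's care are the precise statement of the algebra/compactification dictionary---already recalled in the introduction---and the separability bookkeeping above, both of which are routine once set up.
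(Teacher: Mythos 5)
Your proposal is correct and follows essentially the same route as the paper: the theorem is obtained by chaining Proposition~\ref{p:DEF=RUCu} with Proposition~\ref{p:RUCu=RUC}, identifying $\beta_G M$ with the Stone--Gelfand dual $S_1(M)$ of $\Def(M)$, and deducing metrizability from the separability of $\Def(M)$ via a countable set of formulas and a countable dense set of parameters, exactly as in the paragraph following the theorem in the paper. No gaps.
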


For the metrizability, we recall that if $M$ is a separable structure in a countable language, the algebra $\Def(M)$ is separable: it is generated by the functions of the form $x\mapsto\varphi(x,b)$ where $\varphi(x,y)$ varies over the countable set of \emph{finitary, restricted} formulas of $M$ (see \cite[\textsection 6]{BY-et-al}), and $b$ varies over a countable dense set of tuples of $M^{|y|}$ (where $|y|$ is the length of the tuple of variables $y$, which varies with $\varphi$).

Next we point out a useful characterization of the Kat\v{e}tov functions that form the Gromov compactification of $M$ (see Proposition~\ref{p:Gr-comp}.2.b). We recall that if $M$ is $\aleph_0$-categorical and $G=\Aut(M)$, then the left completion $L$ of $G$ can be identified with the semigroup of \emph{elementary self-embeddings} of $M$ (i.e., isometric maps $\sigma\colon M\to M$ satisfying $\varphi(\sigma a)=\varphi(a)$ for every formula $\varphi(x)$ and tuple $a\in M^{|x|}$); see, for instance, \cite[Prop.~2.10]{ibaRando}. Given an element $a\in M$ and an elementary self-embedding $\sigma\colon M\to M$, let $\xi_{a,\sigma}\colon M\to\R$ be the function defined by
$$\xi_{a,\sigma}\colon x\mapsto d(a,\sigma x).$$
Then $\xi_{a,\sigma}$ belongs to the space $\operatorname{K}(M)$ of Kat\v{e}tov functions on $M$.

\begin{prop}\label{p:gM-katetov}
Let $M$ be an $\aleph_0$-categorical structure and $L$ the semigroup of elementary self-embeddings of $M$. Then the Gromov compactification $\g M$, seen as a subset of $\operatorname{K}(M)$, consists precisely of the Kat\v{e}tov functions $\xi_{a,\sigma}$ for $a\in M$ and $\sigma\in L$.
\end{prop}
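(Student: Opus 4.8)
The plan is to start from the identification of $\g M$ with the closure of $M^d=\{f_z:z\in M\}$ inside $\operatorname{K}(M)$ provided by Proposition~\ref{p:Gr-comp}.2.b (where $f_z(x)=d(x,z)$), and to prove the two inclusions separately. Throughout I will use the fact that $\Aut(M)$, sitting inside $L$ as the group of invertible elements, is dense in $L$ for the topology of pointwise convergence; this density is exactly the approximate homogeneity of $M$, which holds because $M$ is $\aleph_0$-categorical: if two finite tuples have the same type over $\emptyset$, then one can be moved arbitrarily close to the other by an automorphism.

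For the inclusion $\{\xi_{a,\sigma}:a\in M,\ \sigma\in L\}\subseteq\g M$ I would argue as follows. Fix $a\in M$, $\sigma\in L$, a finite set $F=\{x_1,\dots,x_k\}\subseteq M$ and $\eps>0$. Since $\sigma$ is elementary, $\tp(\sigma x_1,\dots,\sigma x_k)=\tp(x_1,\dots,x_k)$, so by approximate homogeneity there is $g\in\Aut(M)$ with $d(g\sigma x_i,x_i)<\eps$ for all $i$. Putting $z=ga\in M$ we get, for every $x\in F$,
$$|f_z(x)-\xi_{a,\sigma}(x)|=|d(ga,x)-d(a,\sigma x)|=|d(ga,x)-d(ga,g\sigma x)|\leq d(x,g\sigma x)<\eps,$$
so $\xi_{a,\sigma}$ lies in the pointwise closure of $M^d$, i.e.\ in $\g M$. (Equivalently, if $g_n\to\sigma$ in $L$ then $f_{g_n^{-1}a}\to\xi_{a,\sigma}$ pointwise.)

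The substantial part is the reverse inclusion $\g M\subseteq\{\xi_{a,\sigma}\}$. Let $\zeta\in\g M$. Since $M$ is separable and bounded, $\operatorname{K}(M)$ is compact metrizable, so I can fix a sequence $z_n\in M$ with $f_{z_n}\to\zeta$ pointwise; passing to a subsequence and using compactness of $S_1(M)$, I may assume $\tp(z_n/M)\to p$ for some $p\in S_1(M)$. For each fixed $x\in M$ the function $d(\,\cdot\,,x)$ belongs to $\Def(M)$, so $d(z_n,x)\to p(d(\,\cdot\,,x))$; comparing with $f_{z_n}(x)\to\zeta(x)$ gives $\zeta(x)=p(d(\,\cdot\,,x))$ for all $x\in M$. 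Now realize $p$ as $\tp(a'/M)$ for some $a'$ in an elementary extension $M'\succeq M$, so that $\zeta(x)=d^{M'}(a',x)$ for all $x\in M$. By downward L\"owenheim--Skolem pick a separable $N$ with $M\cup\{a'\}\subseteq N\preceq M'$; then $M\preceq N\preceq M'$, so the metric and all formulas of $N$ agree with those of $M'$ on tuples from $N$, and in particular $M\preceq N$. As $N$ is a separable model of $\operatorname{Th}(M)$ and $M$ is $\aleph_0$-categorical, there is an isomorphism $\phi\colon N\to M$. Setting $a=\phi(a')\in M$ and $\sigma=\phi\circ\iota\colon M\to M$, where $\iota\colon M\hookrightarrow N$ is the inclusion, $\sigma$ is an isometric elementary self-embedding, hence $\sigma\in L$, and for $x\in M$
$$\xi_{a,\sigma}(x)=d(\phi a',\phi\iota x)=d^{N}(a',x)=d^{M'}(a',x)=\zeta(x),$$
so $\zeta=\xi_{a,\sigma}$.

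I expect the only delicate points to be organizational: checking that $M\preceq N\preceq M'$ really lets one identify the relevant metrics, and that $\sigma=\phi\circ\iota$ is genuinely an elementary self-embedding (being a composite of an elementary embedding and an isomorphism). All the real content sits in the use of $\aleph_0$-categoricity to get $N\cong M$, together with the earlier identification of $L$ with the semigroup of elementary self-embeddings of $M$ and with the approximate homogeneity used in the first inclusion.
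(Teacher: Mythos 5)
Your proof is correct and takes essentially the same route as the paper's: the substantial inclusion $\g M\subseteq\{\xi_{a,\sigma}\}$ is obtained in both cases by realizing a type over $M$ in a separable elementary extension, applying $\aleph_0$-categoricity to get an isomorphism back onto $M$, and restricting it to $M$ to produce $\sigma\in L$ (the paper phrases the passage from $\g M$ to types via the canonical surjection $S_1(M)\to\g M$ rather than via convergent subsequences, but this is only an organizational difference). Your explicit verification of the easy inclusion $\{\xi_{a,\sigma}\}\subseteq\g M$ via approximate homogeneity is a point the paper leaves implicit, and it is correct.
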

\begin{proof}
Every type $p\in S_1(M)$ can be realized by some element $a'$ in a separable elementary extension $M'$ of $M$. By $\aleph_0$-categoricity, there is an isomorphism $\sigma'\colon M'\to M$. If $a=\sigma'(a')$ and $\sigma$ is the restriction of $\sigma'$ to $M$, then $a'\in M$, $\sigma\in L$, and 
$p(\varphi_b)=\varphi(a',b)=\varphi(a,\sigma b)$ for every definable predicate $\varphi_b\in\Def(M)$, $\varphi_b(x)=\varphi(x,b)$.

Let $\pi\colon S_1(M)\to\g M$ be the canonical surjection corresponding to the inclusion $\Gro(M)\subseteq \Def(M)$. The image $\pi(p)$ of $p\in S_1(M)$ can be identified with the map $M\to\R$, $b\mapsto p(f_b)$, where $f_b$ is the elementary Kat\v{e}tov function $x\mapsto d(x,b)$. That is, $\pi(p)(b)=p(f_b)=d(a,\sigma b)$, so $\pi(p)=\xi_{a,\sigma}$ as desired.
\end{proof}

In general, arbitrary definable predicates can be difficult to understand. A more tractable algebra is $\Defqf(M)$, the subset of $\Def(M)$ consisting of the functions $x\mapsto\varphi(x,b)$ where $\varphi(x,y)$ is given by a \emph{quantifier-free} formula (i.e., obtained from the basic predicates by continuous combinations and uniform limits, but without quantification). In other words, $\Defqf(M)$ is the closed algebra generated by the \emph{atomic definable predicates}, i.e., the functions of the form
$$x\mapsto P(x,b)$$
where $b\in M^n$ is any tuple and $P\colon M\times M^n\to\mathbb R$ is either the metric of $M$ (thus $n=1$) or one of the distinguished basic predicates of $M$ (possibly precomposed with a reordering and/or repetition of the variable $x$). Hence we have:
$$\Gro(M)\subseteq \Defqf(M)\subseteq \Def(M).$$

In many interesting examples, $M$ has \emph{quantifier elimination}, meaning that every formula is equivalent to a quantifier-free formula (see \cite[\textsection 4.4]{BenUsv-local-stability} or \cite[\textsection 13]{BY-et-al}), and thus, in particular, $\Defqf(M)=\Def(M)$. It is usually easier to check the conditions given in the following definition.

Recall that a partial isometry $p\colon A \to B$ between subsets $A,B\subseteq M$ is a \emph{partial isomorphism} if $P(pa)=P(a)$ for every basic predicate $P(x)$ of $M$ and every tuple $a\in A^{|x|}$. Equivalently, if $\varphi(pa)=\varphi(a)$ for every quantifier-free formula $\varphi(x)$ and $a\in A^{|x|}$.

\begin{defin}\label{d:app-ultrahomogeneity}
A metric structure $M$ is \emph{ultrahomogeneous} if every partial isomorphism $p\colon A \to B$ between finite subsets $A,B\subseteq M$ can be extended to an automorphism of $M$. It is \emph{approximately ultrahomogeneous} if for every such partial isomorphism and every $\eps >0$ there is $g \in \Aut(M)$ such that $d(pa,ga) < \eps$ for all $a \in A$.
\end{defin}

As is well-known, a separably categorical structure has quantifier elimination if and only if it is approximately ultrahomogeneous. Since we could not find a reference, let us recall why this holds. Every $\aleph_0$-categorical structure $M$ is approximately \emph{homogeneous}, meaning that every \emph{elementary} partial map $p\colon A\to B$ (i.e., such that $\varphi(pa)=\varphi(a)$ for every formula $\varphi(x)$ and $a\in A^{|x|}$) between finite subsets can be extended, up to any $\epsilon>0$, to an automorphism of $M$ (see \cite[Cor.~12.11]{BY-et-al}). If moreover every formula is equivalent to a quantifier-free formula, then every partial isomorphism is elementary, and we see that $M$ is ultrahomogeneous. For the converse, by a standard duality argument (and since $M$ is the only separable model of its theory up to isomorphism) it suffices to show that if $a,b\in M^n$ are finite tuples with $\varphi(a)=\varphi(b)$ for every quantifier-free formula $\varphi(x)$, then $\varphi(a)=\varphi(b)$ for every formula. Now the former condition says that $a\mapsto b$ defines a partial isomorphism, and then ultrahomogeneity and the continuity of formulas easily imply the conclusion.

Finally, recall that a \emph{self-embedding} of $M$ is an isometric map $\sigma\colon M\to M$ such that $P(\sigma a)=P(a)$ for every basic distinguished predicate $P(x)$ of $M$ and every tuple $a\in M^{|x|}$. Equivalently, $\varphi(\sigma a)=\varphi(a)$ for every quantifier-free formula $\varphi(x)$ and $a\in M^{|x|}$. Under quantifier elimination, elementary self-embeddings are thus just self-embeddings. In conclusion:

\begin{cor}\label{c:cat-app-ultra}
If $M$ is separably categorical and approximately ultrahomogeneous, then the algebra $\RUC_G(M)$ is generated by the atomic definable predicates of $M$. Moreover, $\g M$ is the space of Kat\v{e}tov functions $\xi_{a,\sigma}\colon x\mapsto d(a,\sigma x)$ for elements $a\in M$ and self-embeddings $\sigma\colon M\to M$.
\end{cor}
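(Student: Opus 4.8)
The plan is to assemble the corollary from the results already established in this section, with no new argument required. For the first assertion, I would start from Theorem~\ref{t:types}, which gives $\RUC_G(M)=\Def(M)$ for any $\aleph_0$-categorical structure. The remaining input is the equivalence recalled above: a separably categorical structure is approximately ultrahomogeneous if and only if it has quantifier elimination. Invoking this under our hypothesis yields $\Def(M)=\Defqf(M)$. Since $\Defqf(M)$ was defined precisely as the closed (unital) algebra generated by the atomic definable predicates of $M$ — the functions $x\mapsto P(x,b)$ for $b\in M^n$ and $P$ either the metric or a basic predicate, possibly with a reordering/repetition of the displayed variable — the first assertion follows immediately.

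For the description of $\g M$, I would invoke Proposition~\ref{p:gM-katetov}, which identifies $\g M$ (seen as a subset of $\operatorname{K}(M)$) with the set of Kat\v{e}tov functions $\xi_{a,\sigma}\colon x\mapsto d(a,\sigma x)$, where $a$ ranges over $M$ and $\sigma$ ranges over $L$, the semigroup of \emph{elementary} self-embeddings of $M$. The only point to add is that, under quantifier elimination, the notion of elementary self-embedding collapses to that of self-embedding: the condition $\varphi(\sigma a)=\varphi(a)$ for every formula $\varphi(x)$ and every $a\in M^{|x|}$ is then equivalent to the same condition restricted to quantifier-free $\varphi$, which is the definition of a self-embedding. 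Substituting this into the statement of Proposition~\ref{p:gM-katetov} gives the second assertion, $\g M=\{\xi_{a,\sigma}:a\in M,\ \sigma\colon M\to M \text{ a self-embedding}\}$.

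I do not expect any genuine obstacle here: the corollary is a formal consequence of Theorem~\ref{t:types}, Proposition~\ref{p:gM-katetov}, and the already-discussed equivalence between approximate ultrahomogeneity and quantifier elimination for separably categorical structures. The only thing meriting a line of care is the bookkeeping around ``generated by the atomic definable predicates'': one must read this with arbitrary parameter tuples $b\in M^n$ and arbitrary reorderings/repetitions of the free variable allowed, exactly as in the definition of $\Defqf(M)$ given before Definition~\ref{d:app-ultrahomogeneity}, so that the inclusion $\Gro(M)\subseteq\Defqf(M)=\Def(M)=\RUC_G(M)$ is visibly an equality on the level of generators.
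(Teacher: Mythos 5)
Your proposal is correct and follows exactly the paper's route: the corollary is deduced by combining Theorem~\ref{t:types} with the equivalence between approximate ultrahomogeneity and quantifier elimination (giving $\Defqf(M)=\Def(M)=\RUC_G(M)$), and then applying Proposition~\ref{p:gM-katetov} after noting that under quantifier elimination elementary self-embeddings are just self-embeddings. No gaps.
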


Next we review the fundamental examples, including some discrete ones. For the latter, the basic facts mentioned below can be found, for instance, in \cite[\textsection 3.3, \textsection 4.3]{TentZieg}; for the metric examples we give individual references.
All Banach spaces that we consider are over the reals, except in the case of Hilbert spaces, which we consider both over $\R$ and over $\C$.

\sk 
\begin{examples}\label{ex:6Examples}\ 
\begin{enumerate}[wide]\itemsep5pt 
\item Let $X$ be either a countable set with no further structure, or a countable infinite-dimensional vector space over a finite field $F_q$, in each case endowed with the $\{0,1\}$-valued metric. These are $\aleph_0$-categorical, ultrahomogeneous structures. In both cases, the atomic definable predicates boil down to characteristic functions of elements of $X$. Let $G=\operatorname{Sym}(X)$ or $G=\operatorname{GL}_{F_q}(X)$ be the corresponding automorphism group. It follows that $\beta_G$ equals $\g$ and is just the one-point compactification of $X$. For $G=\operatorname{Sym}(X)$, this is the only non-trivial equivariant compactification of~$X$.

\item Let $R$ be the Rado graph---a countable ultrahomogeneous graph containing a copy of every finite graph---, which is $\aleph_0$-categorical. We may endow it with the graph distance: for $R$ this means that if $a\neq b$, $d(a,b)=1$ if $a$ and $b$ are adjacent and $d(a,b)=2$ otherwise. Since the adjacency relation is coded by the metric, we can deduce that the maximal equivariant compactification of $R$ is the Gromov compactification of $(R,d)$.

Moreover, $\beta_G R$ can be identified with $R\cup\{1,2\}^R$, where a base $\mathcal{B}$ for the topology is given as follows: for each $p\in\{1,2\}^A$ defined on a finite subset $A\subseteq R$, let 
$$
B_p=\{b\in R:\forall a\in A,d(a,b)=p(a)\}\cup\{s\in \{1,2\}^R:s\supseteq p\};
$$ 
then $\mathcal{B}$ is the collection of all singletons of elements of $R$, the empty set, and all sets of the form $B_p$.

\item 
The unit sphere $S_{\ell^2}$ of the Hilbert space $\ell^2(\N)$, with the inner product as only distinguished predicate (or, in the complex case, its real and imaginary parts), is $\aleph_0$-categorical and ultrahomogeneous; see \cite[\textsection 15]{BY-et-al}. Since $\|x-b\|=(2-2\operatorname{Re}\langle x,b\rangle)^{1/2}$ (and $\operatorname{Im}\langle x,b\rangle=\operatorname{Re}\langle x,ib\rangle$), we see that $\RUC_G(S_{\ell^2})$ is the closed algebra generated by the functions $x\mapsto \operatorname{Re}\langle x,b\rangle$ for $b\in S_{\ell^2}$. Conversely, by the polarization identity, $\RUC_G(S_{\ell^2})$ is also generated by the functions $x\mapsto \|x-b\|$ for $b\in S_{\ell^2}$. Hence $\Gro(S_{\ell^2})=\RUC_G(S_{\ell^2})$. Taking into account Proposition \ref{p:S-Gr}, this recovers Stoyanov's result mentioned in Example~\ref{ex:Stoyanov}.

\item The Urysohn sphere $\mathbb{U}_1$ is an $\aleph_0$-categorical, ultrahomogeneous metric space; see \cite[\textsection 5]{UsvUrysohn}. It follows that $\Gro(\mathbb{U}_1)= \RUC_G(\mathbb{U}_1)$, giving an alternative proof of Theorem~\ref{t:Urys-sphere}.

Every Kat\v{e}tov function $\xi\in \operatorname{K}(\mathbb{U}_1)$ induces a one-point metric extension $X=\mathbb{U}_1\cup\{a'\}$ of $\mathbb{U}_1$ by setting $d(a',x)=\xi(x)$ for every $x\in \mathbb{U}_1$. By universality of the Urysohn sphere, $X$ embeds in a copy $\mathbb{U}_1'$ of $\mathbb{U}_1$. In other words, we have $\mathbb{U}_1\subseteq X\subseteq \mathbb{U}_1'$ and an isomorphism $\sigma'\colon \mathbb{U}_1'\to \mathbb{U}_1$. Letting $a=\sigma'(a')$ and $\sigma=\sigma'|_{\mathbb{U}_1}$, we see that we can write $\xi$ in the form $\xi_{a,\sigma}\colon x\mapsto d(a,\sigma x)$ of Corollary~\ref{c:cat-app-ultra}. We conclude that $\beta_G\mathbb{U}_1=\operatorname{K}(\mathbb{U}_1)$.

From this description it is easy to produce other non-trivial compactifications of $\mathbb{U}_1$, both proper and non-proper (though necessarily injective, by Proposition~\ref{p:injective}). Indeed, given $0<\lambda<1$, let $\alpha^\lambda$ and $\alpha_\lambda$ be the continuous maps from $\operatorname{K}(\mathbb{U}_1)$ into itself defined by:
$$\alpha^\lambda(\xi)=\max(\lambda,\xi),\ \ \alpha_\lambda(\xi)=\min(\lambda,\xi).$$
Let also $K^\lambda=\alpha^\lambda(\operatorname{K}(\mathbb{U}_1))$ and $K_\lambda=\alpha_\lambda(\operatorname{K}(\mathbb{U}_1))$, and consider the composite maps:
$$\nu^\lambda=\alpha^\lambda\circ\beta^{\mathbb{U}_1}_G\colon \mathbb{U}_1\to K^\lambda,\ \ \nu_\lambda=\alpha_\lambda\circ\beta^{\mathbb{U}_1}_G\colon \mathbb{U}_1\to K_\lambda.$$
Then $\nu^\lambda$ and $\nu_\lambda$ are non-trivial compactifications of $\mathbb{U}_1$, different from $\beta^{\mathbb{U}_1}_G$. Moreover, $\nu^\lambda$ is proper, whereas $\nu_\lambda$ is non-proper. To see the latter, note that (by finite injectivity) for any $x,z_1,\dots,z_n\in\mathbb{U}_1$ there is $y\in\mathbb{U}_1$ such that $d(x,y)=\lambda$ and $d(y,z_i)=\max(\lambda,d(x,z_i))$ for each $i=1,\dots,n$; thus one can construct a sequence $(y_n)$ such that $\nu_\lambda(y_n)\to \nu_\lambda(x)$ but $y_n\notin B_\lambda(x)$ for all~$n$.

\item Now we consider approximately ultrahomogeneous, separably categorical Banach spaces---by which we mean that the unit sphere (or, equivalently, the unit ball) with the induced structure is separably categorical. Other than the Hilbert space, the main examples are the Gurarij space $\mfG$ and the spaces $L^p[0,1]$ for $p\notin 2\N$. (The spaces $L^{2n}[0,1]$ for natural $n>1$ are $\aleph_0$-categorical but not approximately ultrahomogeneous.) For the Gurarij space we refer to \cite{BenHen-gurarij}. For the $L^p$ spaces, $\aleph_0$-categoricity can be deduced from the fact that they are reducts of the \emph{$L^p$ Banach lattices}, which are $\aleph_0$-categorical as per \cite[Fact 17.6]{BY-et-al} (see also \cite[Thm.~3]{Bre-et-al}); for approximate ultrahomogeneity we point to \cite{Ferenc-et-al} and the references therein.
 
Our analysis yields the following.
\end{enumerate}
\end{examples}

\begin{thm}\label{t:Banach-spaces}
Let $V$ be an $\aleph_0$-categorical, approximately ultrahomogeneous Banach space, $G$ its linear isometry group and $S_V$ its unit sphere. Then:
\begin{enumerate}
\item The algebra $\RUC_G(S_V)$ is generated by the family of functions
$$f_v\colon S_V\to\R,\ x\mapsto\|x-v\|$$
for $v\in V$ (not necessarily in $S_V$).
\item The greatest equivariant compactification $\beta_G(S_V)$ is metrizable and can be identified with the space of Kat\v{e}tov functions $\xi\in\operatorname{K}(V)$ of the form
$$\xi_{w,\sigma}\colon V\to\R,\ v\mapsto\|w-\sigma v\|,$$
where $w\in S_V$ and $\sigma\colon V\to V$ is an isometric endomorphism. More precisely, $\xi_{w,\sigma}$ is identified with the unique element of the Stone--Gelfand dual of $\RUC(S_\mfG)$ that maps each $f_v$ to $\xi_{w,\sigma}(v)$.
\item Seeing $\g(S_V)$ as a subset of $\operatorname{K}(S_V)$ as in Proposition~\ref{p:Gr-comp}, the canonical surjection $\beta_G(S_V)\to \g(S_V)$ sends $\xi_{w,\sigma}$ to its restriction to $S_V$.
\end{enumerate}
\end{thm}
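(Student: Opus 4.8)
The plan is to read off all three assertions from the results of this section, once the structure induced on $S_V$ is made explicit. I would realize $S_V$ as a metric structure by taking as basic predicates the functions $Q_{\bar\lambda}(x_0,\dots,x_n)=\|\lambda_0 x_0+\cdots+\lambda_n x_n\|$ for $n<\omega$ and $\bar\lambda$ a rational tuple; with this countable language $\Aut(S_V)=G$, and by hypothesis $S_V$ is $\aleph_0$-categorical and approximately ultrahomogeneous, hence has quantifier elimination. The first step is to describe the atomic definable predicates of $S_V$ in a single free variable: applying $Q_{\bar\lambda}$ to $x$ and parameters in $S_V$ produces $x\mapsto\|\lambda_0 x+v\|$ for some $v\in V$, which is constant when $\lambda_0=0$ and equals $|\lambda_0|\,f_{-v/\lambda_0}$ otherwise; conversely, for every $v\in V$ the function $f_v$ is a uniform limit of the predicates $x\mapsto\|x-\lambda(v/\|v\|)\|$ as $\lambda\to\|v\|$ along the rationals (and $f_0\equiv 1$). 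Hence the closed algebra generated by the atomic definable predicates of $S_V$ coincides with the one generated by $\{f_v:v\in V\}$; and by Corollary~\ref{c:cat-app-ultra} the former is $\RUC_G(S_V)$, which by Theorem~\ref{t:types} equals $\Def(S_V)$. This is assertion~(1).

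For~(2), Theorem~\ref{t:types} gives that $\beta_G(S_V)$ is identified with $S_1(S_V)$ and is metrizable; what remains is a concrete description of $S_1(S_V)$ as a set of Kat\v{e}tov functions on $V$. Since $\Def(S_V)$ is generated as a closed algebra by $\{f_v:v\in V\}$, a $1$-type $p$ is determined by the values $p(f_v)$, so the map $p\mapsto\hat p$ with $\hat p(v)\coloneqq p(f_v)$ is a continuous injection of $S_1(S_V)$ (with its canonical compact topology) into $\R^V$ with the product topology, hence a homeomorphism onto its image; this also yields the ``more precisely'' clause. To compute the image I would rerun the argument of Proposition~\ref{p:gM-katetov}: realize $p$ by an element $a'$ of a separable elementary extension $S_{V'}\succeq S_V$, use $\aleph_0$-categoricity to fix an isomorphism $\sigma'\colon S_{V'}\to S_V$ (which extends to a surjective linear isometry $V'\to V$), and set $w=\sigma'(a')\in S_V$ and $\sigma=\sigma'|_V$, an isometric linear endomorphism of $V$; then $\hat p(v)=\|a'-v\|_{V'}=\|\sigma'(a')-\sigma'(v)\|_V=\|w-\sigma v\|$ for every $v\in V$, i.e.\ $\hat p=\xi_{w,\sigma}$. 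Conversely, given $w\in S_V$ and an isometric linear endomorphism $\sigma$ of $V$, the restriction of $\sigma$ to $S_V$ is a self-embedding, hence (by quantifier elimination) an elementary self-embedding, so $\sigma(S_V)\preceq S_V$; transporting $\tp(w/\sigma(S_V))$ along the isomorphism $S_V\to\sigma(S_V)$ yields a $1$-type $p$ over $S_V$ with $\hat p=\xi_{w,\sigma}$, the identity $p(f_v)=\|w-\sigma v\|$ being obtained by approximating $f_v$ by atomic predicates with parameters in $S_V$ and using the continuity and linearity of $\sigma$. Finally, a one-line triangle-inequality computation (using that $\sigma$ is an isometry) shows each $\xi_{w,\sigma}$ lies in $\operatorname{K}(V)$. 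This proves~(2).

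For~(3), the canonical surjection $\pi\colon\beta_G(S_V)\to\g(S_V)$ is dual to the inclusion $\Gro(S_V)\subseteq\Def(S_V)$, and, under the identification of $\g(S_V)$ with a subset of $\operatorname{K}(S_V)$ (Proposition~\ref{p:Gr-comp}.2.b), it sends a $1$-type $p$ to the function $b\mapsto p(f_b)$ on $S_V$, exactly as in the proof of Proposition~\ref{p:gM-katetov}. For $p=\xi_{w,\sigma}$ this is $b\mapsto\|w-\sigma b\|$, i.e.\ the restriction $\xi_{w,\sigma}|_{S_V}$; and this restriction is precisely the Kat\v{e}tov function $x\mapsto d(w,\sigma x)$ of Corollary~\ref{c:cat-app-ultra}, in agreement with the description of $\g(S_V)$ given there.

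The step I expect to be the main obstacle is the converse direction of~(2): realizing an arbitrary pair $(w,\sigma)$, with $\sigma$ merely an isometric endomorphism (possibly far from surjective), as a genuine $1$-type over $S_V$. Two points need care here: that a self-embedding of $S_V$ is elementary---this is where quantifier elimination is really used, since it is what makes $\sigma(S_V)$ an \emph{elementary} substructure and hence $\tp(w/\sigma(S_V))$ a bona fide type---and that the predicates $f_v$ for general $v\in V$ are only uniform limits of atomic predicates, so that the value of $p$ on them must be obtained by approximation. The preliminary bookkeeping, namely pinning down the structure induced on $S_V$ and checking that it is $\aleph_0$-categorical and approximately ultrahomogeneous, is routine given the hypotheses of the theorem.
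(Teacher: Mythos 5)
Your proposal is correct and follows essentially the same route as the paper: identify the atomic definable predicates of $S_V$ with the functions $f_v$, invoke Corollary~\ref{c:cat-app-ultra} and Theorem~\ref{t:types} for (1) and metrizability, and rerun the argument of Proposition~\ref{p:gM-katetov} to represent each type as some $\xi_{w,\sigma}$. You additionally spell out the converse inclusion in (2) --- that every pair $(w,\sigma)$ with $\sigma$ a possibly non-surjective isometric endomorphism does realize a genuine $1$-type, via quantifier elimination making $\sigma(S_V)$ an elementary substructure --- a point the paper's proof leaves implicit, and your treatment of it is sound.
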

\begin{proof}
The basic predicates of $S_V$ as a metric structure are of the form $\|\sum_{i=1}^n\lambda_i x_i\|$ for scalars $\lambda_i$. Hence, the atomic definable predicates boil down to the functions $f_v\colon S_V\to\R,\ x\mapsto\|x-v\|$ for $v\in V$. By Corollary~\ref{c:cat-app-ultra}, this proves our first claim.

Hence a type $p\in S_1(S_V)$ can be identified with the function $V\to\R$, $v\mapsto p(f_v)$. On the other hand, as in the proof of Proposition~\ref{p:gM-katetov}, every $p\in S_1(M)$ can be represented by an element $w\in S_V$ and an embedding $\sigma\colon V\to V$, so that $p(f_v)=\|w-\sigma v\|$. This proves the second claim, and the third is then clear.
\end{proof}

We remark that the Kat\v{e}tov functions $\xi\colon V\to\R$ of the form $\xi=\xi_{w,\sigma}$ as in the theorem are always \emph{normalized} (in the sense that $\xi(0)=1$) and \emph{convex}. We will denote by $\operatorname{K}_C^1(V)$ the compact set of normalized, convex Kat\v{e}tov functions on the Banach space~$V$.

\begin{example}\label{ex:Gurarij}\ 

Let us consider, in more details, the case of the \textit{Gurarij space} $V=\mfG$. Introduced by Gurarij in \cite{Gurarij1966}, $\mfG$ is the unique separable, approximately ultrahomogeneous real Banach space that is universal for finite-dimensional (or separable) normed spaces. See Ben Yaacov and Henson's work \cite{BenHen-gurarij} for a model-theoretically inspired account of the Gurarij space, intended for logicians and non-logicians.

We show first that $\beta_G(S_\mfG)=\operatorname{K}_C^1(\mfG)$. For this we proceed as in the case of the Urysohn sphere. If $V$ is an arbitrary Banach space and $\xi\in\operatorname{K}_C^1(V)$ then, as shown by Ben Yaacov in \cite[Lemma~1.2]{BenYaacov}, one can construct a Banach space $V'$ extending $V$ with a vector $w'\in V'$ such that $\xi(v)=\|w'-v\|$ for all $v\in V$. Now suppose $\xi\in\operatorname{K}_C^1(\mfG)$ and choose $V'$, $w'$ with these properties. The extension $V'$, which we may assume generated by $V$ and $w'$, is separable, so by universality of the Gurarij space it embeds in a copy $\mfG'$ of $\mfG$. We let $\sigma'\colon \mfG'\to\mfG$ be an isomorphism and set $w=\sigma'(w')$ and $\sigma=\sigma'|_{\mfG}$. Then $w\in S_\mfG$, $\sigma$ is an isometric endomorphism of $\mfG$, and $\xi=\xi_{w,\sigma}$. By Theorem~\ref{t:Banach-spaces}, this establishes our claim.

Next we show that $\Gro(S_\mfG)$ is strictly contained in $\RUC_G(S_\mfG)$. For this it suffices to exhibit two distinct $\xi,\xi'\in \operatorname{K}_C^1(\mfG)$ that agree on the sphere. Note that if $h\colon\R_{\geq 0}\to\R$ is a convex, 1-Lipschitz function with $h(0)=1$ and $h(r)\geq r$ for all $r\in\R$, then the formula $\xi(v)=h(\|v\|)$ defines a normalized convex Kat\v{e}tov function on $\mfG$. Hence, for instance, by considering $h(r)=\max\{1,r\}$ and $h'(r)=r+(1-2r)\chi_{[0,1/2]}(r)$, we get two distinct elements $\xi,\xi'\in\beta_G(S_\mfG)$ whose projections to the Gromov compactification coincide, as desired.
\end{example}

We single out the last conclusion, which answers one of the questions set by Pestov in \cite{Pest-Smirnov}.

\begin{thm} \label{t:Gurarij} 
The maximal equivariant compactification of the unit sphere of the Gurarij space is metrizable and does not coincide with its Gromov compactification.
\end{thm}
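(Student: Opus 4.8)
The plan is to reduce Theorem~\ref{t:Gurarij} entirely to the machinery already developed, so that essentially no new computation is required; everything has been set up in Theorem~\ref{t:Banach-spaces} and Example~\ref{ex:Gurarij}. First I would invoke the general facts about the Gur\`arij space: it is separably categorical and approximately ultrahomogeneous as a Banach space (Examples~\ref{ex:6Examples}.5, with references to \cite{BenHen-gurarij}), so Theorem~\ref{t:Banach-spaces} applies with $V=\mfG$ and $G=\Aut(\mfG)$. This immediately gives that $\beta_G(S_\mfG)$ is metrizable (being a Stone--Gelfand dual of a separable algebra $\Def(S_\mfG)=\RUC_G(S_\mfG)$, cf.\ Theorem~\ref{t:types}) and that $\beta_G(S_\mfG)$ is identified with a space of Kat\v{e}tov functions on $\mfG$, which by the discussion in Example~\ref{ex:Gurarij} is exactly $\operatorname{K}_C^1(\mfG)$, the compact space of normalized convex Kat\v{e}tov functions on $\mfG$.

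The core of the argument is then the separation statement: $\Gro(S_\mfG)\subsetneq\RUC_G(S_\mfG)$. By Proposition~\ref{p:Gr-comp}.2.a this is equivalent to saying that the canonical surjection $\beta_G(S_\mfG)\to\g(S_\mfG)$ is not injective, and by Theorem~\ref{t:Banach-spaces}.3 this surjection is precisely the restriction map $\operatorname{K}_C^1(\mfG)\to\operatorname{K}(S_\mfG)$, $\xi\mapsto\xi|_{S_\mfG}$. So it suffices to exhibit two distinct elements $\xi,\xi'\in\operatorname{K}_C^1(\mfG)$ that agree on the unit sphere $S_\mfG=\{v:\|v\|=1\}$. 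This is exactly what the last paragraph of Example~\ref{ex:Gurarij} does: one checks that for any convex, $1$-Lipschitz $h\colon\R_{\ge 0}\to\R$ with $h(0)=1$ and $h(r)\ge r$ for all $r$, the function $v\mapsto h(\|v\|)$ lies in $\operatorname{K}_C^1(\mfG)$; taking $h(r)=\max\{1,r\}$ and $h'(r)=r+(1-2r)\chi_{[0,1/2]}(r)$ gives two such functions with $h(1)=h'(1)=1$ but $h\neq h'$ (e.g.\ at $r=1/4$: $h(1/4)=1$ while $h'(1/4)=1/4+1/2=3/4$). Hence $\xi(v)=h(\|v\|)$ and $\xi'(v)=h'(\|v\|)$ agree on $S_\mfG$ but are distinct in $\operatorname{K}_C^1(\mfG)=\beta_G(S_\mfG)$, so the projection to $\g(S_\mfG)$ collapses them and cannot be injective.

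The one point that deserves care---and is the only place where something must genuinely be verified rather than quoted---is the claim that $h(\|\cdot\|)\in\operatorname{K}_C^1(\mfG)$ for $h$ as above; that is, that such a radial function really is a normalized convex Kat\v{e}tov function on $\mfG$ (equivalently, on any Banach space, by \cite[Lemma~1.2]{BenYaacov} one then gets a realizing vector in an extension). Normalization $h(\|0\|)=h(0)=1$ is immediate and convexity of $v\mapsto h(\|v\|)$ follows from convexity and monotonicity of $h$ together with convexity of the norm. For the Kat\v{e}tov inequalities $|h(\|u\|)-h(\|v\|)|\le\|u-v\|\le h(\|u\|)+h(\|v\|)$: the left inequality uses the $1$-Lipschitzness of $h$ and the reverse triangle inequality, while the right inequality uses $h(r)\ge r$ and the triangle inequality, $\|u-v\|\le\|u\|+\|v\|\le h(\|u\|)+h(\|v\|)$. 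I expect this verification to be entirely routine; the only conceptual subtlety worth flagging is the identification in Theorem~\ref{t:Banach-spaces}.2 of $\beta_G(S_\mfG)$ with $\operatorname{K}_C^1(\mfG)$ rather than with some smaller set of Kat\v{e}tov functions, which is where universality of the Gur\`arij space (to realize an arbitrary $\xi\in\operatorname{K}_C^1(\mfG)$ via an embedding $\sigma\colon\mfG\to\mfG$) is essential---and this has already been carried out in Example~\ref{ex:Gurarij}. So the proof of Theorem~\ref{t:Gurarij} is just the assembly of these pieces.
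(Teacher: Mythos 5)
Your assembly of the reduction is exactly the paper's: Theorem~\ref{t:Banach-spaces} gives metrizability and the identification $\beta_G(S_\mfG)=\operatorname{K}_C^1(\mfG)$, and non-coincidence with the Gromov compactification is reduced to exhibiting two distinct normalized convex Kat\v{e}tov functions on $\mfG$ that agree on $S_\mfG$. However, there is a genuine gap at precisely the step you single out as the one needing verification, and your proposed justification does not close it. You assert that convexity of $v\mapsto h(\|v\|)$ ``follows from convexity and monotonicity of $h$,'' but your second witness $h'(r)=r+(1-2r)\chi_{[0,1/2]}(r)=\max\{1-r,r\}$ is not monotone, and in fact $\xi'(v)=h'(\|v\|)$ is \emph{not} convex on $\mfG$: for any $u$ with $\|u\|=1/2$ one has $\xi'(u)=\xi'(-u)=1/2$, while the midpoint gives $\xi'(0)=h'(0)=1>1/2$. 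Since every function of the form $v\mapsto\|w-\sigma v\|$ with $\sigma$ linear is convex in the vector-space sense, $\xi'$ does not belong to $\operatorname{K}_C^1(\mfG)$ and does not represent a point of $\beta_G(S_\mfG)$, so the pair $(\xi,\xi')$ does not witness non-injectivity of the projection onto $\g(S_\mfG)$. (The same slip occurs in Example~\ref{ex:Gurarij} of the paper, whose claim about radial Kat\v{e}tov functions silently requires $h$ to be nondecreasing.)

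The theorem is nevertheless correct and the gap is easy to repair, but not by keeping $h(r)=\max\{1,r\}$: a nondecreasing, convex, $1$-Lipschitz profile with $h(0)=h(1)=1$ and $h(r)\ge r$ is forced to equal $\max\{1,r\}$, so the common value on the sphere must be moved away from $1$. For instance, take $h_1(r)=\max\{1+r/2,\ r+1/2\}$ and $h_2(r)=\max\{1,\ r+1/2\}$: both are convex, nondecreasing, $1$-Lipschitz, satisfy $h_i(0)=1$ and $h_i(r)\ge r$, and they agree at $r=1$ (with value $3/2$) while $h_1(1/4)=9/8\neq 1=h_2(1/4)$. For such profiles the composition $v\mapsto h_i(\|v\|)$ really is convex (now monotonicity is available), the Kat\v{e}tov inequalities follow from $1$-Lipschitzness and $h_i\ge\mathrm{id}$ exactly as you describe, and the two resulting elements of $\operatorname{K}_C^1(\mfG)$ are distinct but agree on $S_\mfG$. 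With this substitution your argument goes through unchanged.
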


By Theorem~\ref{t:types}, the metrizability of the greatest equivariant compactification also holds for the unit spheres $S_p$ of the Banach spaces $V_p=L^p[0,1]$ for $1\leq p<\infty$. In the reflexive case $p>1$ ($p\neq 2$), it is asked in \cite{Pest-Smirnov} whether this maximal compactification is given by the natural inclusion $\nu_p\colon S_p\to B_p^w$ of the unit sphere into the unit ball endowed with the weak topology.

Note that if $V$ is a reflexive Banach space and $B_V^w$ denotes its unit ball with the weak topology, then $S_V \to B_V^w$ is a $G$-equivariant compactification (the action of $G=\Aut(V)$ on $B_V^w$ is continuous). On the other hand, if $V$ is a uniformly convex Banach space, then the weak and the norm topologies coincide on the sphere. In particular, for $1<p<\infty$, the map $\nu_p\colon S_p\to B_p^w$ is a proper equivariant compactification of $S_p$.

\begin{thm}
Let $1<p<\infty$, $p\neq 2$. Then the Gromov compactification of $S_p$ is \emph{not} a factor of $\nu_p\colon S_p\to B_p^w$. In particular, $\nu_p$ is neither the maximal equivariant nor the Gromov compactification of $S_p$.
\end{thm}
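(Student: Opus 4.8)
The plan is to reduce the statement to showing that one single elementary Kat\v{e}tov function belonging to $\Gro(S_p)$ fails to extend to a continuous function on $B_p^w$. Indeed, by the correspondence between equivariant compactifications and closed unital $G$-invariant subalgebras of $\RUC_G$ recalled in the introduction, if $\g(S_p)$ were a factor of $\nu_p$ then we would have $\Gro(S_p)\subseteq\{F|_{S_p}:F\in C(B_p^w)\}$; in particular every generator $f_z\colon x\mapsto\|x-z\|_p$ (for $z\in S_p$) of $\Gro(S_p)$ would extend continuously over $B_p^w$. I will refute this for $z=\mathbf 1$, the constant function $1$ on $[0,1]$, which lies in $S_p$ since $\|\mathbf 1\|_p=1$.

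To this end I exhibit a sequence in $S_p$ that converges in $B_p^w$ but along which $f_{\mathbf 1}$ does not converge. Since convergence in $B_p^w$ of a sequence from $S_p$ is just weak convergence in $L^p$, it suffices to produce two weakly null sequences in $S_p$ with distinct limiting values of $f_{\mathbf 1}$ and then interleave them. Take first the travelling bumps $x_k=k^{1/p}\chi_{[0,1/k]}\in S_p$: pairing against $g$ in the dual space $L^q$ (where $q<\infty$ since $p>1$) and using $\|g\chi_{[0,1/k]}\|_q\to 0$ shows $x_k\rightharpoonup 0$, while a direct computation gives $\|x_k-\mathbf 1\|_p^p=(1-k^{-1/p})^p+(1-k^{-1})\to 2$. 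Take next the Rademacher functions $r_k\in S_p$: these are weakly null by Riemann--Lebesgue, and since $|\{r_k=1\}|=|\{r_k=-1\}|=1/2$ one has $\|r_k-\mathbf 1\|_p^p=2^p\,|\{r_k=-1\}|=2^{p-1}$ for every $k$. Interleaving, $y_{2k}:=x_k$ and $y_{2k-1}:=r_k$ defines a weakly null sequence with $f_{\mathbf 1}(y_{2k})\to 2^{1/p}$ and $f_{\mathbf 1}(y_{2k-1})=2^{(p-1)/p}$ for all $k$. As $2^{1/p}=2^{(p-1)/p}$ holds only for $p=2$, the sequence $\bigl(f_{\mathbf 1}(y_n)\bigr)$ does not converge when $p\neq 2$; hence $f_{\mathbf 1}$ has no continuous extension to $B_p^w$, and $\g(S_p)$ is not a factor of $\nu_p$. (The obstruction sits at the point $0$, which is one of the points of $B_p\setminus S_p$ added by $\nu_p$.)

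The ``in particular'' is then formal: by Proposition~\ref{p:Gr-comp} the Gromov map $\g\colon S_p\to\g(S_p)$ is an equivariant compactification of $S_p$, hence a factor of the greatest one $\beta_G(S_p)$, and it is trivially a factor of itself; so if $\nu_p$ were equivalent to either $\beta_G(S_p)$ or $\g(S_p)$, then $\g(S_p)$ would be a factor of $\nu_p$, contradicting the above.

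I do not anticipate a genuine difficulty here. The points deserving a line of care are the weak nullity of the two auxiliary sequences (absolute continuity of the integral for the bumps, Riemann--Lebesgue for the Rademacher functions) and the reduction in the first paragraph, which is just the standard Gelfand-type duality between equivariant compactifications and closed unital $G$-invariant subalgebras of $\RUC_G$.
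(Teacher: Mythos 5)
Your proof is correct, and while it is built on the same two witnesses as the paper's argument, the execution is genuinely different and more elementary. The paper realizes the two limiting behaviours you use --- a Rademacher-type oscillation and a disjointly-supported escape to zero --- as $1$-types $\tp(w/S_p)$ and $\tp(w'/S_p)$ over an elementary extension $W=L^p([0,2]\times[0,1])$, views them as two points of $\beta_G(S_p)=S_1(S_p)$ via Theorem~\ref{t:types}, and checks that the projection to $B_p^w$ identifies them while the projection to $\g(S_p)$ separates them (evaluating, exactly as you do, the distance to $\chi_{[0,1]\times[0,1]}$ and obtaining the same two constants $2^{(p-1)/p}$ and $2^{1/p}$). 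You instead show directly that the single generator $f_{\mathbf 1}\in\Gro(S_p)$ admits no continuous extension to $B_p^w$, by interleaving the travelling bumps $k^{1/p}\chi_{[0,1/k]}$ with the Rademacher functions into one weakly null sequence along which $f_{\mathbf 1}$ oscillates between those two values; the reduction to extendability of a generator via the algebra correspondence is sound, and the norm computations and weak-nullity claims all check out. What your route buys is self-containment: no elementary embeddings, no quantifier elimination for $L^p$ lattices, no identification of $\beta_G$ with the type space --- only basic $L^p$ facts. What the paper's route buys is structural information: it exhibits the two points of $\beta_G(S_p)$ explicitly and fits the example into the general framework of Section~\ref{s:Models}. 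The only cosmetic quibble is the attribution of the weak nullity of the Rademacher sequence to Riemann--Lebesgue; it is cleaner to cite orthonormality in $L^2$ together with density of $L^\infty$ in $L^q$ and uniform boundedness in $L^p$, but the fact itself is standard and the step is not a gap.
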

\begin{proof}
We consider the Banach space $W=L^p([0,2]\times[0,1])$, and we identify $V_p=L^p[0,1]$ with the subspace $V\subseteq W$ of $p$-integrable functions $x\colon [0,2]\times [0,1]\to\R$ with support contained in $[0,1]\times [0,1]$ that are measurable with respect to the first coordinate. That is,
$$V=\{x\in W : x(t_1,t_2)=x'(t_1)\chi_{[0,1]}(t_1)\text{ a.e.\ for some }x'\in L^p[0,1]\}\simeq V_p.$$
Hence we also identify the unit sphere $S_p$ of $V_p$ with the unit sphere $S_V$ of $V$.

The inclusion $V\subseteq W$ (when restricted to the corresponding spheres, or balls, according to the choice of formalization) is an \emph{elementary} embedding of Banach spaces in the sense of continuous logic. Indeed, we can see it as an embedding in the richer language of normed vector lattices. Then, since the theory of atomless $L^p$ Banach lattices has quantifier elimination (see \cite[Fact~17.5]{BY-et-al}), this is an elementary embedding of Banach lattices, and thus also of Banach spaces.

Let $w$ and $w'$ be the elements of $W$ defined as follows:
\begin{align*}
w(t)=\begin{cases}
          1\hspace{15pt}\text{if }t\in [0,1]\times [0,1/2]\\
          -1\hspace{6pt}\text{if }t\in [0,1]\times (1/2,1]\\
          0\hspace{15pt}\text{if }t\in (1,2]\times [0,1]
          \end{cases} &\hspace{10pt}
w'(t)=\begin{cases}
          0\hspace{15pt}\text{if }t\in [0,1]\times [0,1]\\
          1\hspace{15pt}\text{if }t\in (1,2]\times [0,1]
          \end{cases}
\end{align*}
We have $\|w\|=\|w'\|=1$. Since the inclusion $V\subseteq W$ is elementary, we can consider the types $\xi=\tp(w/S_V)$ and $\xi'=\tp(w'/S_V)$ of the structure $V$. In view of Theorem~\ref{t:types}, we can also see $\xi$ and $\xi'$ as elements of the compactification $\beta_G(S_V)$.

Let $\pi_\g\colon\beta_G(S_V)\to \g(S_V)$ and $\pi_\nu\colon\beta_G(S_V)\to B_p^w$ be the canonical projections onto the compactifications $\g S_V$ and $B_p^w$. We claim that $\pi_\g(\xi) \neq \pi_\g(\xi')$ and that $\pi_\nu(\xi) = \pi_\nu(\xi')$. This then implies the theorem.

To see that $\pi_\g(\xi) \neq \pi_\g(\xi')$, consider the vector $v\in S_V$, $v=\chi_{[0,1]\times [0,1]}$, and the function $f_v\in\Gro(S_V)$, $f_v\colon x\mapsto \|x-v\|$. Then $\xi(f_v) = \|w-v\| = (2^p/2)^{1/p} = 2^{\frac{p-1}{p}}$ whereas $\xi'(f_v) = \|w'-v\| = 2^{1/p}$, and these values are distinct since $p\neq 2$.

To see that $\pi_\nu(\xi) = \pi_\nu(\xi')$, we first note that the algebra of RUC functions on $S_p$ corresponding to the compactification $\nu_p$ is generated by the functions $h_z\colon x\mapsto\langle x,z\rangle$, where $z\in L^q[0,1]$, $1/p+1/q=1$, and $\langle x,z\rangle=\int_0^1 xz\, dt_1$ is the canonical pairing. We see the functions $h_z$ as RUC functions on $S_V$. As such, each $h_z$ is a definable predicate of the structure $V$, and thus also of the elementary extension $W$. In fact, as a predicate on~$W$, $h_z$ is given by $h_z(x)=\int x\tilde z\, dt$, where the integral is calculated over $[0,2]\times [0,1]$ and $\tilde z$ is the function $\tilde z(t_1,t_2)=z(t_1)\chi_{[0,1]}(t_1)$. It follows that
$$\xi(h_z) = \int w\tilde z\, dt= 0 = \int w'\tilde z\, dt= \xi'(h_z)$$
for every $z\in L^q[0,1]$. This shows that $\pi_\nu(\xi) = \pi_\nu(\xi')$, and finishes the proof.
\end{proof}

On the other hand, we do not know whether $\beta_G^{S_p}=\g^{S_p}$, but this boils down to the first of the following questions, at least when $p\notin 2\N$.

\begin{question}\ 

Suppose $1\leq p<\infty$, and let $V_p$, $S_p$ and $\nu_p\colon S_p\to B_p^w$ be as above.
\begin{enumerate}
\item Given $v\in V_p$, is the function $S_p\to\R,\ x\mapsto\|x-v\|$ in the algebra $\Gro(S_p)$?

\item Characterize the Kat\v{e}tov functions on $V_p$ that can be represented in the form $v\mapsto \|w-\sigma v\|$ where $\sigma\colon V_p\to V_p$ is an isometric endomorphism and $w\in S_p$.
\item For $p>1$, is $\nu_p$ a factor of the Gromov compactification of $S_p$?
\end{enumerate}
\end{question}

\sk 

Following \cite{Pest-Smirnov}, we have focused on the unit \emph{spheres} of Banach spaces. In the natural examples, the isometric actions on the spheres have the nice feature of being topologically transitive (thus, minimal). However, one may also consider the actions on the unit \emph{balls}, as isometric $G$-spaces. In that case, some problems get easier. If $V$ is a separably categorical, approximately ultrahomogeneous Banach space and $B_V$ is its unit ball (seen as an isometric $G$-space for $G=\Aut(V)$), our analysis shows that to prove $\Gro(B_V)=\RUC_G(B_V)$ it suffices to check that the functions $B_V\to\R,\ x\mapsto\|x-v\|$ for $v\in V$ are in the closed algebra generated by the functions $B_V\to\R,\ x\mapsto\|x-z\|$ for $z\in B_V$. As pointed out to us by I. Ben Yaacov, this is easily verified for the unit balls of the $L^p$ spaces, as follows.

\begin{thm}[Ben Yaacov]
Let $B_p$ be the unit ball of $V_p=L^p[0,1]$ for $1\leq p<\infty$, $p\notin 2\N$. Then the maximal equivariant compactification of $B_p$ is its Gromov compactification.
\end{thm}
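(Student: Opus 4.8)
The plan is to invoke the reduction stated just before the theorem. Since $V_p$ (for the relevant values of $p$) is separably categorical and approximately ultrahomogeneous, Corollary~\ref{c:cat-app-ultra} identifies $\RUC_G(B_p)$ with the closed algebra generated by the atomic definable predicates of $B_p$, which---exactly as in the proof of Theorem~\ref{t:Banach-spaces}---reduce, up to constant factors, to the functions $f_v\colon B_p\to\R$, $x\mapsto\|x-v\|_p$, for $v\in V_p$; whereas $\Gro(B_p)$ is the closed algebra generated by the $f_z$ with $z\in B_p$ only. So the whole point is to show that $f_v\in\Gro(B_p)$ for every $v\in V_p$, in particular for $v$ with $\|v\|_p>1$.

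The key would be an elementary identity special to $L^p$. Given $v\in V_p$ with $\lambda=\|v\|_p>1$ (if $\lambda\le 1$ there is nothing to do), I would pick an integer $N\ge\lambda^p$ and, using the continuity of $t\mapsto\int_{[0,t]}|v|^p$, split $[0,1]$ into intervals $A_1,\dots,A_N$ with $\int_{A_j}|v|^p=\lambda^p/N\le 1$ for each $j$. Setting $v_j=v\chi_{A_j}$, one has $v=\sum_{j=1}^N v_j$, the $v_j$ have pairwise disjoint supports, and each $v_j\in B_p$. Splitting $\int_0^1|x-v|^p$ over the $A_j$ and using that $v$ coincides with $v_j$ on $A_j$ while $v_j$ vanishes off $A_j$, a short computation would yield, for every $x\in B_p$,
$$\|x-v\|_p^p=\sum_{j=1}^N\|x-v_j\|_p^p-(N-1)\|x\|_p^p.$$

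From this the conclusion is formal. Since $0\in B_p$, the function $x\mapsto\|x\|_p=f_0(x)$ belongs to $\Gro(B_p)$, as do the $f_{v_j}$; because $\Gro(B_p)$ is a closed unital algebra, the displayed identity gives $f_v^p=\sum_j f_{v_j}^p-(N-1)f_0^p\in\Gro(B_p)$. Finally, $f_v$ is bounded by $2$ and $f_v^p$ takes values in the compact interval $[0,2^p]$, on which $t\mapsto t^{1/p}$ is a uniform limit of polynomials (Weierstrass); hence $f_v=(f_v^p)^{1/p}$ is a uniform limit of polynomials in $f_v^p$ and lies in $\Gro(B_p)$. This would give $\RUC_G(B_p)=\Gro(B_p)$, i.e.\ $\beta_G(B_p)=\g(B_p)$.

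I do not expect a serious obstacle; the only points requiring care are that the reduction in the first paragraph genuinely uses distance functions to \emph{all} of $V_p$ (which is exactly what makes the statement non-trivial, and exactly the content imported from Theorem~\ref{t:Banach-spaces}), and that passing from $f_v^p$ back to $f_v$ is legitimate only because everything in sight is bounded. The real content is the displayed identity: it works because the $L^p$ norm is a $p$-th power of an integral, so a vector decomposes additively---through the $p$-th power of its norm---into disjointly supported pieces that can each be taken inside the unit ball. Note that this argument uses $p<\infty$ but not $p\notin 2\N$; the restriction on $p$ is only needed in order to apply Corollary~\ref{c:cat-app-ultra}, i.e.\ to have approximate ultrahomogeneity.
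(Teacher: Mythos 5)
Your proof is correct and is essentially the paper's own argument: decompose $v$ into disjointly supported pieces $v_j\in B_p$, use the identity $\|x-v\|^p=\sum_j\|x-v_j\|^p-(N-1)\|x\|^p$, and combine with the reduction from Theorem~\ref{t:Banach-spaces} to the distance functions $f_v$ for $v\in V_p$. The only (harmless) slip is the claim that $f_v\leq 2$ --- in general $f_v\leq 1+\|v\|_p$ on $B_p$ --- but boundedness is all the Weierstrass step needs, so the argument stands as written.
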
 
\begin{proof}
Any $v\in V_p$ can be written as $v=\sum_{i=1}^n v_i$ where $v_i\in B_p$ and the supports of the $v_i$ are disjoint (for some given representatives $v_i\colon [0,1]\to\R$). Hence, for every $x\in B_p$ we have $\|x-v\| = \big(\sum_{i=1}^n\|x-v_i\|^p - (n-1)\|x-0\|\big)^{1/p}$, showing that the function $x\mapsto\|x-v\|$ is in $\Gro(B_p)$, as desired.
\end{proof}

For the unit ball $B_\mfG$ of the Gurarij space the situation is different. Note that, proceeding as in Theorem~\ref{t:Banach-spaces} and Example~\ref{ex:Gurarij}, the maximal equivariant compactification of $B_\mfG$ can be identified with the set $\operatorname{K}_C^{\leq 1}(\mfG)$ of convex Kat\v{e}tov functions $\xi\colon \mfG\to\R$ such that $\xi(0)\leq 1$.

\begin{prop}
Let $B_\mfG$ be the unit ball of the Gurarij space $\mfG$. Then $\Gro(B_\mfG)\neq\RUC_G(B_\mfG)$.
\end{prop}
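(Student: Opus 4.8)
The plan is to produce, proceeding exactly as in Example~\ref{ex:Gurarij}, two distinct points of $\beta_G(B_\mfG)=\operatorname{K}_C^{\leq 1}(\mfG)$ that have the same image in $\g(B_\mfG)$; the new feature compared to the sphere is that we now need the two points to agree on the whole ball $B_\mfG$, not only on the sphere. Since $\RUC_G(B_\mfG)$ is generated by the functions $f_v\colon x\mapsto\|x-v\|$ with $v\in\mfG$, while $\Gro(B_\mfG)$ is generated by the $f_z$ with $z\in B_\mfG$, it suffices by Stone--Gelfand duality to exhibit $\xi_1\neq\xi_2$ in $\operatorname{K}_C^{\leq 1}(\mfG)$ with $\xi_1(z)=\xi_2(z)$ for every $z\in B_\mfG$.

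I would look among \emph{radial} functions. A routine check shows that if $h\colon[0,\infty)\to\R$ is convex, non-decreasing and $1$-Lipschitz, with $h(0)\leq 1$ and $h(r)\geq r$ for all $r$, then $\xi_h(v)\coloneqq h(\|v\|)$ belongs to $\operatorname{K}_C^{\leq 1}(\mfG)$: convexity of $\xi_h$ comes from $h$ being convex and non-decreasing and $\|\cdot\|$ being convex, while the Kat\v{e}tov inequalities $|\xi_h(x)-\xi_h(y)|\leq\|x-y\|\leq\xi_h(x)+\xi_h(y)$ reduce, via the triangle inequality, to the $1$-Lipschitz property of $h$ and to $h(r)\geq r$. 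So it is enough to find two such profiles $h_1,h_2$ that coincide on $[0,1]$ but differ somewhere on $(1,\infty)$.

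The key observation is that the "naive" profiles, those with $h(1)=1$, cannot do the job: if $h$ is convex, non-decreasing and $1$-Lipschitz with $h(1)=1$ and $h(r)\geq r$, then $r\leq h(r)\leq h(1)+(r-1)=r$ forces $h(r)=r$ for every $r\geq 1$, so two such profiles agreeing on $[0,1]$ are automatically equal. One must therefore allow $h(1)>1$; then there is genuine room on $(1,\infty)$ for the profile to increase at a rate strictly below $1$ before being driven back to slope $1$ by the constraint $h(r)\geq r$. Concretely I would take
$$h_1(r)=\begin{cases}1+\tfrac r2 & r\leq 1,\\ r+\tfrac12 & r\geq 1,\end{cases}\qquad h_2(r)=\begin{cases}1+\tfrac r2 & r\leq 2,\\ r & r\geq 2,\end{cases}$$
so that $h_1=h_2=1+r/2$ on $[0,1]$ while $h_1(\tfrac32)=2\neq\tfrac74=h_2(\tfrac32)$, verify that both satisfy the listed conditions, and set $\xi_i=\xi_{h_i}$. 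Then $\xi_1,\xi_2\in\operatorname{K}_C^{\leq 1}(\mfG)=\beta_G(B_\mfG)$ are distinct points whose restrictions to $B_\mfG$ coincide (both equal the Kat\v{e}tov function $v\mapsto 1+\tfrac12\|v\|$), hence they are identified by the canonical surjection $\beta_G(B_\mfG)\to\g(B_\mfG)$, and therefore $\Gro(B_\mfG)\subsetneq\RUC_G(B_\mfG)$.

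The only genuine obstacle is the third step: recognizing that to obtain agreement on the whole ball while keeping two distinct convex Kat\v{e}tov functions one must leave the sphere, i.e.\ use profiles that are slack at $r=1$. Once suitable $h_1,h_2$ are in hand, the remaining computations are entirely routine and parallel to Example~\ref{ex:Gurarij}.
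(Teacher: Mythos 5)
Your proof is correct and follows essentially the same route as the paper: both reduce to exhibiting two distinct convex, non-decreasing, $1$-Lipschitz radial profiles $h\colon\R_{\geq 0}\to\R$ with $h(0)\leq 1$ and $h(r)\geq r$ that agree on $[0,1]$, the only difference being the concrete choice of $h_1,h_2$. In fact your choice is the safer one: the paper's second profile $(1+r)\chi_{[0,1)}+2\chi_{[1,2)}+r\chi_{[2,\infty)}$ has successive slopes $1,0,1$ and so is not convex as printed, whereas your $h_1,h_2$ genuinely satisfy all the required conditions, and your observation that any admissible profile with $h(1)=1$ is forced to equal $r$ on $[1,\infty)$ correctly identifies why one must work with $h(1)>1$.
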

\begin{proof}
As in Example~\ref{ex:Gurarij}, it suffices find two distinct convex, 1-Lipschitz functions $h_1,h_2\colon \R_{\geq 0}\to\R$ such that $h_i(0)\leq 1$, $h_i(r)\geq r$ and $h_1(r)=h_2(r)$ for all $r\leq 1$. We can take $h_1(r)=1+r$ and $h_2(r)=(1+r)\chi_{[0,1)}(r) + 2\chi_{[1,2)}(r) + r\chi_{[2,\infty)}(r)$.
\end{proof}

\sk 

Finally, one can ask about the complexity of the Gromov compactification and of the maximal equivariant compactification of isometric systems in terms of the \emph{dynamical hierarchy} of Banach representations, in the sense of \cite{GM-survey}. Combining the results of \cite{ibaDyn} with our previous analysis, one can see for example that every \emph{tame} function on the Urysohn sphere is constant, and hence that the Gromov compactification of $\mathbb{U}_1$ admits only trivial equivariant representations on Rosenthal Banach spaces. Several results of this kind about dynamical properties of $\beta_GM $ (i.e., $S_1(M)$) for concrete $\aleph_0$-categorical structures $M$ can be deduced from \cite{ibaDyn}.

\noindent\hrulefill

\sk 
\section{On uniform micro-transitivity}\label{s:UMT}
\sk 

A fundamental theorem proved by Effros in \cite{Effros}, sometimes called the Open Mapping Principle or the Effros Microtransitivity Theorem, asserts that if $G\actson X$ is a transitive continuous action of a Polish group $G$ on a Polish space $X$, and $x\in X$ is any point, then the orbit map $g\in G\mapsto gx\in X$ is open. In other words, for every $x\in X$ and every $U\in N_e(G)$ the set $Ux$ is a \nbd of $x$. Ancel \cite{Ancel} coined the term \emph{micro-transitive} for an action with this property.

The theorem actually gives an equivalence.

\begin{thm*}[Effros]
Let $G\actson X$ be a transitive, continuous action of a Polish group on a separable, metrizable space $X$.
The following conditions are equivalent: 
\begin{enumerate}
\item The action is micro-transitive.
\item $X$ is Polish.
\item $X$ is non-meager.
\end{enumerate}
\end{thm*}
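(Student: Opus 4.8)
The plan is to prove the equivalence around the cycle $(1)\Rightarrow(2)\Rightarrow(3)\Rightarrow(1)$, the last implication being Effros' theorem in its essential form; the first two will go quickly, and I will treat $(3)\Rightarrow(1)$ in stages.

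For $(1)\Rightarrow(2)$ I would fix $x\in X$ and consider the orbit map $\pi\colon G\to X$, $g\mapsto gx$, which is continuous and onto by transitivity. Micro-transitivity makes $\pi$ \emph{open}: for open $O\subseteq G$ and $g_0\in O$ we have $g_0^{-1}O\in N_e$, so $(g_0^{-1}O)x$ is a neighborhood of $x$ and hence $\pi(O)=g_0\bigl((g_0^{-1}O)x\bigr)$ is a neighborhood of $g_0x$. Since $X$ is metrizable, hence $T_1$, the stabilizer $H=\operatorname{St}(x)=\pi^{-1}(\{x\})$ is a closed subgroup, so $\pi$ descends to a continuous open bijection $G/H\to X$, which is therefore a homeomorphism; as $G$ is Polish and $H$ is closed, $G/H$ is Polish, and so is $X$. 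The implication $(2)\Rightarrow(3)$ is immediate, since a nonempty completely metrizable space is a Baire space and so is non-meager in itself.

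The substance is $(3)\Rightarrow(1)$. Assuming $X$ non-meager, I must show that for every $x\in X$ and $U\in N_e$ the set $Ux$ is a neighborhood of $x$. \emph{Step A (orbits of neighborhoods are non-meager).} Given $V\in N_e$ and $z\in X$, write $G=\bigcup_n g_nV$ with $g_n\in G$ (separability of $G$); by transitivity $X=\bigcup_n g_nVz$, so some $g_nVz$ is non-meager, and applying the homeomorphism $g_n^{-1}$ of $X$ shows $Vz$ is non-meager, whence $\overline{Vz}$ has nonempty interior. \emph{Step B (closure version of micro-transitivity).} I claim $z\in\operatorname{int}(\overline{Vz})$ for every $z\in X$ and $V\in N_e$: picking a symmetric $W\in N_e$ with $WW\subseteq V$, Step A gives a nonempty open $O=\operatorname{int}(\overline{Wz})$, and since $Wz$ is dense in $\overline{Wz}\supseteq O$ there is $w_0\in W$ with $w_0z\in O$; then $z=w_0^{-1}(w_0z)$ lies in the open set $w_0^{-1}O\subseteq\overline{w_0^{-1}Wz}\subseteq\overline{(WW)z}\subseteq\overline{Vz}$, using $w_0^{-1}\in W^{-1}=W$. \emph{Step C (removing the closure).} Here completeness of $G$ enters. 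Fix a complete compatible metric $\rho$ on $G$ (it exists as $G$ is Polish) and choose $V_0\in N_e$ with $\overline{V_0}\subseteq U$. For each $y$ in a suitable ball $B_d(x,\eta)$ I would build $g_0=e,g_1,g_2,\dots$ in $G$ and a decreasing sequence of symmetric neighborhoods $V_0\supseteq V_1\supseteq\cdots$ with $V_{n+1}V_{n+1}\subseteq V_n$, so that $g_{n+1}=v_{n+1}g_n$ with $v_{n+1}\in V_{n+1}$, $g_nx\to y$, and $\rho(g_{n+1},g_n)<2^{-n}$ (hence $(g_n)$ is $\rho$-Cauchy). Then $g:=\lim g_n\in G$ lies in $\overline{V_0}\subseteq U$ (by the product estimate $g_n\in V_nV_{n-1}\cdots V_1\subseteq V_0$) and satisfies $gx=\lim g_nx=y$ by continuity of the action, so $y\in Ux$ and $Ux\supseteq B_d(x,\eta)$. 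At stage $n$, having fixed $g_n$, I would first choose $V_{n+1}$ small enough (continuity of multiplication at $(e,g_n)$) that $\rho(vg_n,g_n)<2^{-n}$ for all $v\in V_{n+1}$ and $V_{n+1}V_{n+1}\subseteq V_n$, and then use Step B at $z=g_nx$ to find $v_{n+1}\in V_{n+1}$ with $g_{n+1}x=v_{n+1}g_nx$ as close to $y$ as the next stage requires.

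The hard part will be Step C, and specifically the interlocking of the choices there: the radius around $g_nx$ furnished by Step B and the admissible size of $V_{n+1}$ both depend on $g_n$, which is not determined until one has chosen $v_n$ at the previous stage. I expect to resolve this exactly as in Effros' original argument: performing the selections in the right order at each stage (first $g_n$, then $V_{n+1}$, then $v_{n+1}$), fixing in advance a summable schedule of tolerances $2^{-n}$, and exploiting the freedom in Step B to push $g_{n+1}x$ closer to $y$ than whatever tolerance the following stage will impose, so that the induction never stalls. This bookkeeping, together with completeness of $G$, is the genuine content of the implication; Steps A and B only use that $G$ is separable and $X$ non-meager.
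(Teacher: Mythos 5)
The paper does not prove this statement: it is quoted as a classical theorem, with references to Effros' original article \cite{Effros} and to Ancel's alternative proof \cite{Ancel}, so there is no internal argument to compare yours against. Judged on its own terms, your cycle $(1)\Rightarrow(2)\Rightarrow(3)$ is correct (micro-transitivity makes the orbit map open, so $X\cong G/H$ with $H$ closed and hence $X$ is Polish; Polish spaces are Baire), and Steps A and B of $(3)\Rightarrow(1)$ are the standard Baire-category argument, correctly executed: for every $z\in X$ and $V\in N_e$ you obtain $z\in\operatorname{int}\bigl(\overline{Vz}\bigr)$, with a radius $\delta(z,V)>0$ depending on both $z$ and $V$.

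Step C, however, has a genuine gap, and it is exactly the one you flag at the end. In your one-sided scheme the target $y$ is fixed and the base point $g_nx$ moves; to continue the induction you need $d(g_nx,y)<\delta(g_nx,V_{n+1})$, but that radius is only defined once $g_n$ (and hence $V_{n+1}$) has been chosen, whereas $g_n$ was chosen at the previous stage precisely to make $d(g_nx,y)$ small --- smaller than a quantity not yet available. No reordering of the selections ``first $g_n$, then $V_{n+1}$, then $v_{n+1}$'' breaks this circle, because the one datum you cannot adjust is $y$: as the candidates $vg_nx$ approach $y$, the associated radii $\delta(vg_nx,\cdot)$ may shrink faster, and Step B provides no lower bound for them. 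The standard resolution (in Effros, Ancel, and the textbook proofs) is a \emph{two-sided} approximation: build two Cauchy sequences $g_n=v_n\cdots v_1$ and $h_n=w_n\cdots w_1$, alternately pushing $g_nx$ toward $h_{n-1}y$ and $h_ny$ toward $g_nx$. Then every application of Step B is made at a base point that is already determined, and the point required to lie in $\overline{V_{n+1}g_nx}$ (respectively $\overline{W_{n+1}h_ny}$) is the one about to be chosen, so it can be placed within the already-computed radius. In the limit $gx=hy$ with $g\in\overline{V_0}$ and $h\in\overline{W_0}$, and $u=h^{-1}g\in U$ satisfies $ux=y$. With this modification (your Steps A and B unchanged) the proof closes; as written, it does not.
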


In this section we wish to investigate a uniform version of the micro-transitivity property, along with a weak variant. Let us (re)introduce all the definitions. We phrase them in the setting of metric spaces, although they make sense for arbitrary topological or uniform spaces, according to the case.

\begin{defin} Let us say that a continuous action $G\actson X$ of a topological group $G$ on a metric space $(X,d)$ is:
\begin{enumerate}
\item \emph{Micro-transitive}, if for every $x\in X$ and $U\in N_e$ there is $\delta>0$ such that $B_\delta(x)\cap Gx\subseteq Ux$.\footnote{In Ancel's definition the conclusion is $B_\delta(x)\subseteq Ux$ (without intersecting with $Gx$). Hence, the orbits of a micro-transitive action in Ancel's sense are open. We prefer this weaker formulation.}

\item \emph{Weakly micro-transitive}, if for every $x\in X$ and $U\in N_e$ there is $\delta>0$ such that $B_\delta(x)\cap\ov{Gx}\subseteq \ov{Ux}$.

\item \emph{Uniformly micro-transitive (UMT)}, if for every $U\in N_e$ there is $\delta>0$ such that $B_\delta(x)\cap Gx\subseteq Ux$ for all $x\in X$.

\item \emph{Uniformly weakly micro-transitive (UWMT)}, if for every $U\in N_e$ there is $\delta>0$ such that $B_\delta(x)\cap\ov{Gx}\subseteq \ov{Ux}$ for all $x\in X$.
\end{enumerate}
\end{defin}

It is clear that the weak variants are implied by the corresponding strong versions. For actions of Polish groups on Polish spaces the converse is true. In fact, passing from weak micro-transitivity to micro-transitivity is the main step in Ancel's proof of Effros' theorem; see \cite[Lemma~4]{Ancel}. Below we will give an easier, standard argument for this implication \emph{using} Effros' theorem, which we learned from T. Tsankov.

So the main new notion is that of uniform micro-transitivity.\footnote{We observe that an alternative uniform version of micro-transitivity in the context of Banach spaces has been considered recently in \cite{Cab-et-al}. We also point to the work of Kozlov \cite{Kozlov13}, where other related notions are studied.}
Of course, for transitive isometric actions of SIN groups (i.e., containing a basis of conjugation-invariant neighborhoods of the identity), the uniform version is equivalent to the standard one. Note that a topological group is SIN if and only if its left and right uniform structures coincide (see \cite[Prop.~2.17]{RD}).

\begin{examples}\label{ex:UMT}\ 
\begin{enumerate}
\item The group $G=\operatorname{GL}_2(\R)$, being a locally compact Polish group, admits a left-invariant, complete metric $d_L$ (see \cite[Prop.~8.8]{RD}). Let $(X,d)=(G,d_L)$. Then the natural left action $G\actson X$ is a transitive, isometric action of a Polish group on a Polish metric space which is not UMT. Indeed, if it were UMT then the right uniformity on $G$ would be coarser than the uniformity of $d_L$ (see Proposition~\ref{p:UMT-right-unif} below), hence $G$ would be SIN, which is not (see \cite[p.~45--46]{RD}).

\item The system $U(\ell^2)\actson S_{\ell^2}$ is uniformly micro-transitive. In fact, it is not difficult to see that the action is micro-transitive for the finer topology on $U(\ell^2)$ given by the operator norm, which is SIN (but not Polish). On the other hand, the diagonal actions $U(\ell^2)\actson (S_{\ell^2})^n$ for $n\geq 2$, or the action on the unit ball $U(\ell^2)\actson B_{\ell^2}$, are not UMT.

\item The action $\Iso(\mathbb{U}_1)\actson \mathbb{U}_1$ is UMT, and so are all the diagonal actions  $\Iso(\mathbb{U}_1)\actson (\mathbb{U}_1)^n$ for $n<\omega$. Moreover, the actions $(\Iso(\mathbb{U}_1),d_u)\actson (\mathbb{U}_1)^n$ are UMT, where $d_u$ denotes the metric of uniform convergence (defined by $d_u(g,h)=\sup_{z\in\mathbb{U}_1}d(gz,hz)$), which is bi-invariant (thus SIN) and refines the topology of pointwise convergence. This is implied by the following proposition.
\end{enumerate}
\end{examples}

\begin{prop}
Denote $G=\Iso(\mathbb{U}_1)$. Suppose $\epsilon>0$ and $x,y\in(\mathbb{U}_1)^n$ are such that $Gx=Gy$ and $d(x_i,y_i)\leq\epsilon$ for each $i=1,\dots,n$. Then there is $g\in G$ such that $gx_i=y_i$ for each $i=1,\dots,n$ and $d(gz,z)\leq\epsilon$ for every $z\in\mathbb{U}_1$.
\end{prop}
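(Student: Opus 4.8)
The plan is to build $g$ by a back-and-forth construction, obtaining it as the unique continuous extension of an ever-growing finite partial isometry of $\mathbb{U}_1$. Fix a countable dense set $D\subseteq\mathbb{U}_1$ containing all the $x_i$ and $y_i$. First note that, since $\mathbb{U}_1$ is ultrahomogeneous, the hypothesis $Gx=Gy$ is equivalent to $d(x_i,x_j)=d(y_i,y_j)$ for all $i,j$; in particular $p_0\colon x_i\mapsto y_i$ is a well-defined partial isometry between finite subsets of $\mathbb{U}_1$, and by hypothesis it satisfies $d(u,p_0(u))\le\epsilon$ for every $u$ in its domain. The back-and-forth will produce a chain $p_0\subseteq p_1\subseteq\cdots$ of finite partial isometries of $\mathbb{U}_1$, each extending $p_0$ and each satisfying the invariant
\[
d(u,p_k(u))\le\epsilon\qquad\text{for all }u\in\operatorname{dom}(p_k),
\]
arranged so that $\bigcup_k\operatorname{dom}(p_k)$ and $\bigcup_k\operatorname{range}(p_k)$ both contain $D$.

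The heart of the matter is the extension step. Suppose $p\colon P\to Q$ is a finite partial isometry of $\mathbb{U}_1$ with $d(u,p(u))\le\epsilon$ for all $u\in P$, and let $z\in\mathbb{U}_1\setminus P$. I want to find $w\in\mathbb{U}_1$ such that $p\cup\{(z,w)\}$ is again a partial isometry with the same displacement bound, i.e.\ $d(w,p(u))=d(z,u)$ for all $u\in P$ and $d(z,w)\le\epsilon$. By finite injectivity of $\mathbb{U}_1$ it suffices to check that the prescription $f_0(p(u)):=d(z,u)$ (for $u\in P$) and $f_0(z):=r$, where $r:=\max_{u\in P}|d(p(u),z)-d(z,u)|$, defines a Kat\v{e}tov function on the finite metric space $Q\cup\{z\}$ with all values $\le 1$, and that $r\le\epsilon$. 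The bound $r\le\epsilon$ is immediate from $|d(p(u),z)-d(z,u)|\le d(u,p(u))\le\epsilon$. For the Kat\v{e}tov condition the only non-obvious inequalities are those between a point $p(u)\in Q$ and the point $z$, and these reduce, after using that $p$ is an \emph{isometry} (so that $d(p(u),p(u'))=d(u,u')$), to
\[
|d(p(u),z)-d(z,u)|\ \le\ d(z,u'')+d(p(u''),z)\qquad(u,u',u''\in P),
\]
which follows from two applications of the triangle inequality in $\mathbb{U}_1$. Granting this, $w$ exists; moreover $w\notin Q$ because $d(w,p(u))=d(z,u)>0$, so $p\cup\{(z,w)\}$ is genuinely a one-point extension of $p$ in the domain. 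The symmetric "back" step, adjoining a prescribed point $w'\in\mathbb{U}_1\setminus Q$ to the range, is handled in exactly the same way applied to $p^{-1}$, which also has displacement $\le\epsilon$.

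With the extension step in hand the back-and-forth is routine: enumerate $D$ and alternately force the $k$-th element of $D$ into the domain and then into the range of the growing partial isometry, starting from $p_0$. The union $p_\infty:=\bigcup_k p_k$ is an isometry between two dense subsets of $\mathbb{U}_1$, both containing $D$, with $d(u,p_\infty(u))\le\epsilon$ on its domain and $p_\infty\supseteq p_0$. Since $\mathbb{U}_1$ is complete, $p_\infty$ extends uniquely to an isometric embedding $g\colon\mathbb{U}_1\to\mathbb{U}_1$; its image is closed and contains the dense set $\operatorname{range}(p_\infty)\supseteq D$, so $g$ is onto, i.e.\ $g\in\Iso(\mathbb{U}_1)$. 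Finally $g(x_i)=p_0(x_i)=y_i$, and $d(gz,z)\le\epsilon$ holds on the dense set $\operatorname{dom}(p_\infty)$, hence on all of $\mathbb{U}_1$ by continuity. The only genuinely delicate point is the Kat\v{e}tov verification in the extension step — and in particular the use there of the hypothesis $Gx=Gy$, which is precisely what guarantees that $p_0$, and hence every $p_k$, is a true partial isometry rather than merely a displacement-bounded partial map.
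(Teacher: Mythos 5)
Your proof is correct and follows essentially the same route as the paper's: the core step in both is a one-point metric extension realized via finite injectivity (your Kat\v{e}tov function $f_0$ plays the role of the paper's auxiliary point $z'$, with the displacement bound $\le\epsilon$ checked against the triangle inequalities), combined with a back-and-forth that the paper dismisses as ``standard'' and you carry out explicitly at the level of partial isometries. The only differences are cosmetic, e.g.\ you take the minimal admissible value $r$ for $d(w,z)$ where the paper takes the maximal one capped at $\epsilon$.
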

\begin{proof}
Fix $\eps$, $n$ and $x_i,y_i$ as in the statement. We claim that for a given $z\in\mathbb{U}_1$ we can find $g\in G$ such that $gx_i=y_i$ for each $i$ and $d(gz,z)\leq\epsilon$. Then the proposition follows from a standard back-and-forth argument.

By finite injectivity and ultrahomogeneity, to prove the claim it suffices to show that the finite metric space $F=\{x_1,\dots,x_n,y_1,\dots,y_n,z\}$ can be extended to a metric space $F'=F\cup\{z'\}$ of diameter at most 1 such that $d(z',y_i)=d(z,x_i)$ for each $i$, and $d(z',z)\leq\epsilon$. Such an extension can be obtained by setting:
\begin{equation*}
\begin{cases}
d(z',y_i)=d(z,x_i)\\ 
d(z',x_i)=\min\{1,d(z,x_i)+d(x_i,y_i):i=1,\dots,n\}\\ 
d(z',z)=\min\{\eps,d(z,x_i)+d(y_i,z):i=1,\dots,n\}. 
\end{cases}
\end{equation*}
A one-by-one inspection shows that all triangles inequalities are satisfied.
\end{proof}

\sk

We have already considered uniform weak micro-transitivity in Section~\ref{s:Urysohn}, were it proved useful to establish $\Gro(X)=\RUC_G(X)$ for Urysohn-like spaces. On the other hand, in Section~\ref{s:Models} we pointed to the importance of the intermediate equation $\RUC^u_G(X)=\RUC_G(X)$, where $\RUC^u_G(X)$ is the algebra of RUC functions that are uniformly continuous with respect to the metric of $X$. We now observe the following.

\begin{prop}\label{prop:RUCu(M)=RUC(M)}
Let $(X,d)$ be a metric space and let $G\actson X$ be a continuous isometric action. Suppose the action is topologically transitive and UWMT. Then $\RUC^u_G(X)= \RUC_G(X)$.
\end{prop}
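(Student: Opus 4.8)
The plan is to show that every $f\in\RUC_G(X)$ is automatically uniformly continuous with respect to $d$, which gives $f\in\RUC^u_G(X)$ and hence the desired equality (the reverse inclusion being trivial). So suppose, for contradiction, that $f\in\RUC_G(X)\setminus\RUC^u_G(X)$. Then there are $\eps>0$ and sequences $a_n,b_n\in X$ with $d(a_n,b_n)\to 0$ but $|f(a_n)-f(b_n)|\geq\eps$ for all $n$. Using that $f$ is RUC, fix $U\in N_e$ such that $\sup_{x\in X}|f(gx)-f(x)|<\eps/3$ for every $g\in U$. By UWMT applied to $U$, there is $\delta>0$ such that $B_\delta(x)\cap\ov{Gx}\subseteq\ov{Ux}$ for \emph{all} $x\in X$.

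The key step is to exploit that $d(a_n,b_n)\to 0$ together with topological transitivity to place $a_n$ and $b_n$ in a common orbit closure. Choose $n$ large enough that $d(a_n,b_n)<\delta$. A priori $a_n$ and $b_n$ need not lie in the same orbit, so the plan is as follows: by topological transitivity there is a point $x$ with dense orbit, and we can find $g\in G$ with $gx$ close to $a_n$, so that $a_n\in\ov{Gx}$; but in general not every point has dense orbit. The cleaner route: apply UWMT with $x=a_n$. Since $b_n\in B_\delta(a_n)$, provided $b_n\in\ov{Ga_n}$ we get $b_n\in\ov{Ua_n}$, and then there is $g\in U$ with $d(ga_n,b_n)$ arbitrarily small; combined with continuity of $f$ at $b_n$ and $|f(ga_n)-f(a_n)|<\eps/3$, this forces $|f(a_n)-f(b_n)|<\eps$, a contradiction. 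So the whole argument reduces to arranging $b_n\in\ov{Ga_n}$ (equivalently $\ov{Ga_n}=\ov{Gb_n}$). Here topological transitivity enters: one first replaces the sequences by ones lying on a single dense orbit. Concretely, since the action is topologically transitive and $f$ is continuous, one can perturb $a_n,b_n$ slightly—moving each less than, say, a small $\eta_n\to 0$ and keeping $|f(a_n)-f(b_n)|\geq\eps/2$ and $d(a_n,b_n)$ small—so that $a_n$ lies in the orbit of some fixed transitive point $x_0$; then $\ov{Ga_n}=\ov{Gx_0}\supseteq$ everything, in particular $b_n\in\ov{Ga_n}$ once $b_n$ is also arranged to lie in $\ov{Gx_0}=X$.

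More carefully, the structure I would write is: (i) fix $\eps$, the bad sequences, and $U$ as above, shrinking $\eps$ to $\eps/2$ as needed for the perturbations; (ii) by continuity of $f$, there is $\eta>0$ with $|f(x)-f(y)|<\eps/8$ whenever $d(x,y)<\eta$; (iii) pick any point $x_0$ whose orbit is dense (topological transitivity guarantees the existence of a dense orbit on a suitable comeager set, but for the contradiction it suffices to note $\ov{Gx_0}=X$ for such $x_0$, and we may take $x_0$ arbitrary if the action is \emph{minimal}; in the merely topologically transitive case one works with a point of a dense orbit); replace $a_n$ by a nearby $a_n'=h_na_n$… — here one must be slightly careful since moving $a_n$ by an element of $G$ doesn't change its orbit, so instead pick $g_n\in G$ with $d(g_nx_0,a_n)<\min(\eta,\delta/3)$, set $\tilde a_n=g_nx_0$, and similarly $\tilde b_n$ near $b_n$ with $\tilde b_n\in Gx_0$; (iv) then $d(\tilde a_n,\tilde b_n)<\delta$ and $\tilde b_n\in Gx_0=G\tilde a_n\subseteq\ov{G\tilde a_n}$, so UWMT gives $\tilde b_n\in\ov{U\tilde a_n}$; (v) choose $g\in U$ with $d(g\tilde a_n,\tilde b_n)<\eta$; (vi) estimate
\[
|f(\tilde a_n)-f(\tilde b_n)|\leq |f(\tilde a_n)-f(g\tilde a_n)|+|f(g\tilde a_n)-f(\tilde b_n)|<\eps/3+\eps/8<\eps/2;
\]
(vii) but $|f(\tilde a_n)-f(\tilde b_n)|\geq |f(a_n)-f(b_n)|-|f(a_n)-f(\tilde a_n)|-|f(b_n)-f(\tilde b_n)|\geq\eps-\eps/8-\eps/8>\eps/2$, a contradiction.

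The main obstacle is step (iii): making the perturbed points land in a single orbit while controlling both $d(\tilde a_n,\tilde b_n)$ and the values $f(\tilde a_n),f(\tilde b_n)$. If the action is minimal this is immediate (every orbit is dense, so we can take $x_0$ arbitrary and find $g_n$ as above). If it is only topologically transitive—meaning some orbit $Gx_0$ is dense—one must check that a point of that dense orbit can be used uniformly; this should follow because density of $Gx_0$ lets us approximate both $a_n$ and $b_n$ by elements of $Gx_0$, and then those approximants automatically share the orbit $Gx_0$. I expect no difficulty beyond bookkeeping once this point is set up, since everything else is a standard $\eps/3$-argument combined directly with the UWMT hypothesis. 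The proof is close in spirit to that of Proposition~\ref{p:RUCu=RUC}, with UWMT and topological transitivity replacing the model-theoretic input there.
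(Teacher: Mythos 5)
Your overall strategy (an RUC neighbourhood $U$, the uniform $\delta$ from UWMT, and continuity of $f$ to pass from $\ov{Ux}$ to the nearby point) is exactly the paper's, but you get stuck on, and then over-engineer, precisely the point the paper dispatches in its first sentence. For an \emph{isometric} action, topological transitivity already implies that \emph{every} orbit is dense: if $Gx_0$ is dense and $y,z\in X$, $\eps>0$, pick $h,k$ with $d(hx_0,y)<\eps/2$ and $d(kx_0,z)<\eps/2$; then $d(kh^{-1}y,z)\le d(h^{-1}y,x_0)+d(kx_0,z)<\eps$, so $Gy$ is dense too. Hence $\ov{Ga_n}=X$ for all $n$, $b_n\in\ov{Ga_n}$ holds automatically, and the entire perturbation onto a fixed dense orbit in your step (iii) --- which you yourself flag as ``the main obstacle'' and leave only sketched --- is unnecessary. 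With this observation the proof is two lines: given $\eps$, take $U$ with $\sup_x|f(gx)-f(x)|<\eps/3$ for $g\in U$, take $\delta$ from UWMT, and for $d(x,y)<\delta$ approximate $y$ by points $ux$, $u\in U$, using continuity of $f$ at $y$; no contradiction argument is needed.

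Two further cautions. First, your step (ii) as written (``there is $\eta>0$ with $|f(x)-f(y)|<\eps/8$ whenever $d(x,y)<\eta$'') asserts a \emph{uniform} modulus of continuity for $f$, which is exactly the conclusion you are trying to prove; what you actually need, and all you may use, is continuity of $f$ at the finitely many specific points $a_n$, $b_n$, $\tilde b_n$, with moduli depending on the point. This is repairable but must be repaired. Second, if one reads ``topologically transitive'' as the open-sets condition rather than ``some orbit is dense,'' then on a general (non-Polish) metric space the existence of the transitive point $x_0$ your construction relies on is not automatic --- except that, again, the isometric-action argument above gives minimality directly, so the issue never arises once you make that observation. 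In short: the estimates in (iv)--(vii) are fine, but the missing idea is the one-line reduction ``isometric $+$ topologically transitive $\Rightarrow$ minimal.''
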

\begin{proof}
We have $\ov{Gx}=X$ for all $x$. If $f\in \RUC_G(X)$, a straightforward combination of the property defining RUC, the uniform condition $B_\delta(x)\subseteq \ov{Ux}$, and the continuity of $f$, yields that $f$ is uniformly continuous with respect to the metric of $X$.
\end{proof}

In Section~\ref{s:Models} we showed that every separably categorical structure $M$ under the action of its automorphism group satisfies $\RUC^u_G(M)=\RUC_G(M)$. One may then ask whether every topologically transitive $\aleph_0$-structure is UWMT.

The answer is negative, because, as we mentioned before, a weakly micro-transitive Polish action is necessarily micro-transitive, and for isometric systems this implies in turn that all orbits are closed (see Lemma~\ref{l:generic-vs-weakly-generic} below). Thus, for instance, a topologically transitive, non-transitive $\aleph_0$-categorical structure (e.g., the unit sphere of the Gurarij space or of the $L^p$ spaces for $p\neq 2$) cannot be UWMT. Nevertheless, we will see that this is the only obstruction: a transitive $\aleph_0$-categorical structure is uniformly micro-transitive.

Given a system $G\actson X$, let us say that a point $x\in X$ is \emph{(weakly) generic} if for every $U\in N_e$ there is $\delta>0$ such that $B_\delta(x)\cap Gx\subseteq Ux$ (respectively, $B_\delta(x)\cap\ov{Gx}\subseteq \ov{Ux}$).

\begin{lem}\label{l:generic-vs-weakly-generic}
Let $G\actson X$ be a continuous action of a Polish group on a Polish metric space. Given $x\in X$, the following are equivalent:
\begin{enumerate}
\item $Gx$ is non-meager in its closure.
\item $Gx$ is comeager in its closure.
\item $x$ is generic.
\item $x$ is weakly generic.
\end{enumerate}
In particular, if the action is isometric and (weakly) micro-transitive then every orbit is closed.
\end{lem}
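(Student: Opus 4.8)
The plan is to replace $X$ by the closed, $G$-invariant subspace $Y:=\overline{Gx}$, which is Polish and carries the dense orbit $Gx$; note that all four conditions refer only to $x$, $Gx$ and $Y$. I would begin by recording two standard Baire-category facts: since $Gx$ is dense in the Baire space $Y$, the set $Gx$ is meager in $Y$ if and only if it is meager in itself; and a dense Polish subspace of $Y$ is automatically a dense $G_\delta$, hence comeager in $Y$. I would then prove the equivalence by running the cycle $(1)\Leftrightarrow(2)$ and $(1)\Rightarrow(3)\Rightarrow(4)\Rightarrow(1)$.

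For $(1)\Rightarrow(2)$ and $(1)\Rightarrow(3)$: if $Gx$ is non-meager in $Y$ then it is non-meager in itself, so Effros' theorem applied to the transitive action $G\actson Gx$ on the separable metrizable space $Gx$ shows that this action is micro-transitive and that $Gx$ is Polish. Micro-transitivity gives in particular that $x$ is generic, which is $(3)$; and $Gx$, being a dense Polish subspace of $Y$, is comeager in $Y$, which is $(2)$. The reverse implication $(2)\Rightarrow(1)$ is immediate as $Y$ is Baire. For $(3)\Rightarrow(4)$ I would argue topologically: if $Ux$ is a neighbourhood of $x$ in $Gx$, say $W\cap Gx\subseteq Ux$ with $W$ open in $Y$ and $x\in W$, then density of $Gx$ gives $W\subseteq\overline{W\cap Gx}\subseteq\overline{Ux}$, so $\overline{Ux}$ is a neighbourhood of $x$ in $Y$; translated back into the ball formulation this is exactly weak genericity of $x$.

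The heart of the matter is $(4)\Rightarrow(1)$: weak genericity of $x$ forces $Gx$ to be non-meager in $Y$. This is precisely the implication ``weakly micro-transitive $\Rightarrow$ micro-transitive'' for Polish actions, which is the main step of Ancel's proof of Effros' theorem. I would follow the standard route: show that weak genericity makes $Gx$ a $G_\delta$ (equivalently, Polish) subspace of $Y$ --- by a Baire-category ``fan'' construction inside a decreasing sequence of neighbourhoods $\overline{U_k x}$ with $U_k\downarrow\{e\}$ --- and then close the loop with Effros' theorem. I expect this to be the main obstacle: weak genericity only tells us that $\overline{Ux}$ is fat near $x$, not that $Ux$ itself is, and bridging this gap between an orbit and its closure is exactly what requires Effros' theorem (or, alternatively, Ancel's original argument).

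For the final ``in particular'', suppose the action is isometric and (weakly) micro-transitive; by the equivalence just established every point is generic, hence, applying $(3)\Rightarrow(2)$ at every point, every orbit is comeager in its closure. Suppose, for contradiction, that some orbit $Gx$ is not closed, and pick $y\in\overline{Gx}\setminus Gx$ together with $g_n\in G$ such that $g_nx\to y$. Since each $g_n^{-1}$ is an isometry, $d(g_n^{-1}y,x)=d(y,g_nx)\to 0$, so $g_n^{-1}y\to x$ with $g_n^{-1}y\in Gy$; hence $x\in\overline{Gy}$, and therefore $\overline{Gx}=\overline{Gy}$. But then the disjoint orbits $Gx$ and $Gy$ would both be comeager in the nonempty Polish space $\overline{Gx}$, contradicting the Baire category theorem. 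Hence every orbit is closed.
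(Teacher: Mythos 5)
Your cycle is set up correctly and most of it goes through: the reduction to $Y=\overline{Gx}$, the implications $(1)\Leftrightarrow(2)$ and $(1)\Rightarrow(3)$ via Effros' theorem applied to the transitive action on the orbit, the density argument for $(3)\Rightarrow(4)$, and the closing ``in particular'' (where your explicit use of the isometry to get $x\in\overline{Gy}$ and hence $\overline{Gx}=\overline{Gy}$ supplies a detail the paper leaves implicit). The problem is $(4)\Rightarrow(1)$, which you correctly identify as the heart of the matter but then do not prove: you defer to a ``Baire-category fan construction'' showing that $Gx$ is $G_\delta$, i.e.\ essentially to Ancel's Lemma~4, and you flag it yourself as the main obstacle. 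As written this is a genuine gap. Moreover, your diagnosis of where the difficulty lies is off: $(4)\Rightarrow(1)$ needs neither Effros' theorem nor Ancel's fan argument.

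The paper's proof of $(4)\Rightarrow(1)$ is a few lines of Baire category \emph{in the group}. Suppose $x$ is weakly generic and $Gx\subseteq\bigcup_{n}F_n$ with each $F_n$ closed in $\overline{Gx}$; to show non-meagerness it suffices to find an $F_n$ with non-empty relative interior. Pulling back along the continuous orbit map $\pi\colon G\to X$, $g\mapsto gx$, we get $G=\bigcup_n\pi^{-1}(F_n)$, so, $G$ being Polish, some $\pi^{-1}(F_n)$ contains a non-empty open set $U$; thus $Ux\subseteq F_n$ and hence $\overline{Ux}\subseteq F_n$. Choosing $g_0\in U$ and $V\in N_e$ with $Vg_0\subseteq U$, and noting that $g_0x$ is weakly generic whenever $x$ is (conjugate the neighbourhood of the identity and push the ball forward by the homeomorphism $g_0$), we see that $\overline{Vg_0x}\subseteq\overline{Ux}$ contains a relative neighbourhood of $g_0x$ in $\overline{Gx}$. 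So $F_n$ has non-empty interior in $\overline{Gx}$, and $Gx$ is non-meager there. The point you missed is that non-meagerness is tested against \emph{closed} covers, and closed sets absorb the closures $\overline{Ux}$ --- which is exactly the information weak genericity provides. Effros' theorem is genuinely needed only for $(1)\Rightarrow(3)$, where you already invoke it; your alternative route through Ancel's lemma is viable in principle but heavier than necessary and, in your proposal, not carried out.
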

\begin{proof}
The implications $(1)\Rightarrow (2)\Rightarrow (3)$ follow from Effros' theorem, and $(3)\Rightarrow (4)$ is clear. We show $(4)\Rightarrow (1)$. Suppose that $x$ is weakly generic and $Gx\subseteq\bigcup_{n \in \N } F_n$, where the $F_n\subseteq\ov{Gx}$ are closed. Consider the orbit map $\pi\colon G\to X$, $g\mapsto gx$. Then $G=\bigcup_{n \in \N }\pi^{-1}(F_n)$, and as $G$ is Polish there are $n$ and an open set $U\subseteq G$ such that $U\subseteq\pi^{-1}(F_n)$. Hence $Ux\subseteq F_n$, and by weak genericity of $x$ we see that $F_n$ has non-empty interior relative to $\ov{Gx}$. This shows that $Gx$ is non-meager in $\ov{Gx}$.
 
If the action is isometric and weakly micro-transitive, and $y\in\ov{Gx}$, then $Gx$ and $Gy$ are comeager subsets of $\ov{Gx}=\ov{Gy}$, whence $Gx=Gy$. Thus every orbit is closed.
\end{proof}

\sk 
\begin{prop}
Let $G$ and $X$ be Polish and $G\actson X$ be a continuous action. If the action is UWMT, then it is UMT.
\end{prop}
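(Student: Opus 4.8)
The plan is to deduce micro-transitivity from the hypothesis by means of Lemma~\ref{l:generic-vs-weakly-generic}, and then to upgrade this pointwise statement to a uniform one by transporting the \emph{uniform} estimate furnished by UWMT along the homeomorphism $Gx\cong G/G_x$, where passing from a closure to the underlying set becomes a triviality about topological groups.

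In more detail, I would first observe that UWMT implies that every point of $X$ is weakly generic, so by Lemma~\ref{l:generic-vs-weakly-generic} every point is generic; that is, the action is micro-transitive. As is standard, this implies that for each $x\in X$ the orbit map $\pi_x\colon G\to Gx$, $g\mapsto gx$ (where $Gx$ carries the topology inherited from $X$) is open: indeed, micro-transitivity says precisely that $Uz$ is a neighbourhood of $z$ in $Gz$ for every $z\in Gx$ and $U\in N_e$. Hence $\pi_x$ factors through a homeomorphism $\iota\colon G/G_x\to Gx$, where $G_x$ is the stabiliser of $x$; let $q\colon G\to G/G_x$ be the (continuous, open) quotient map, so that $\iota\circ q=\pi_x$.

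Next, fix $U\in N_e$, choose an open symmetric $V\in N_e$ with $VV\subseteq U$, and apply UWMT to $V$ to obtain $\delta>0$ such that $B_\delta(x)\cap\ov{Gx}\subseteq\ov{Vx}$ for every $x\in X$. The claim is that this $\delta$ witnesses UMT for $U$. Let $x\in X$ and $y\in B_\delta(x)\cap Gx$, and write $y=gx$ with $g\in G$. Then $y\in\ov{Vx}$, and since $y\in Gx$ this closure may be computed inside the subspace $Gx$; as $\iota$ is a homeomorphism carrying $q(V)$ onto $Vx$, it follows that $gG_x=\iota^{-1}(y)$ lies in the closure of $q(V)$ in $G/G_x$. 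Because $q$ is continuous and open, $q^{-1}(\ov A)=\ov{q^{-1}(A)}$ for every $A\subseteq G/G_x$; applying this with $A=q(V)$, so that $q^{-1}(A)=VG_x$, gives $g\in\ov{VG_x}$. Finally, in any topological group one has $\ov B\subseteq WB$ for every symmetric $W\in N_e$ and every $B\subseteq G$; with $B=VG_x$ and $W=V$ this yields $g\in V(VG_x)=(VV)G_x\subseteq UG_x$. Hence $y=gx\in U(G_x x)=Ux$, which is exactly what UMT requires, and $\delta$ depends only on $U$.

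The only non-formal ingredient in this scheme is Lemma~\ref{l:generic-vs-weakly-generic} (and therefore Effros' theorem), which is where the Polishness of $G$ and $X$ is used; everything afterwards is bookkeeping in the topological group $G$. The step I expect to require the most care---and the only point where a naive approach breaks down---is the removal of the closure: one cannot carry this out directly in $X$, since $\ov{Vx}$ need not be contained in $\pi_x(\ov V)$ when $\ov V$ fails to be compact (as it does for non-locally-compact Polish $G$, even though $\ov V\subseteq VV\subseteq U$); the resolution is to lift the question to $G/G_x$, where the analogous containment $\ov{VG_x}\subseteq V\cdot VG_x$ is automatic.
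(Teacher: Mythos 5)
Your argument is correct, and its first half coincides with the paper's: both proofs begin by observing that UWMT makes every point weakly generic, hence generic by Lemma~\ref{l:generic-vs-weakly-generic}, and both then fix $V$ with $V^{-1}V\subseteq U$ and use UWMT for $V$ to get a uniform $\delta$ with $B_\delta(x)\cap Gx\subseteq \ov{Vx}\cap Gx$. Where you diverge is in how the closure is removed. The paper stays inside $X$: genericity of $y$ gives a $\delta'$ with $B_{\delta'}(y)\cap Gy\subseteq Vy$, one picks $v\in V$ with $vx$ within $\delta'$ of $y$, writes $vx=v'y$, and concludes $y\in V^{-1}Vx\subseteq Ux$. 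You instead upgrade pointwise genericity to openness of the orbit map, identify $Gx$ with $G/G_x$, pull the relation $y\in\mathrm{cl}_{Gx}(Vx)$ back to $g\in\ov{VG_x}$ via the identity $q^{-1}(\ov{A})=\ov{q^{-1}(A)}$ for the open quotient map $q$, and finish with the standard fact $\ov{B}\subseteq VB$ in a topological group. Both mechanisms are sound; the paper's is slightly more economical (no need to establish the homeomorphism $G/G_x\cong Gx$, only genericity of the single point $y$), while yours isolates cleanly where the non-formal input ends and pure topological-group bookkeeping begins, and your closing remark correctly identifies why the naive containment $\ov{Vx}\subseteq \ov{V}x$ is unavailable for non-locally-compact $G$.
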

\begin{proof}
If the action is UWMT then every point is weakly generic, and in fact generic by the previous lemma. Let $U\in N_e$ and choose $V\in N_e$ such that $V^{-1}V\subseteq U$. By UWMT, there is $\delta>0$ such that $B_\delta(x)\cap\ov{Gx}\subseteq\ov{Vx}$ for every $x\in X$. Now let $x\in X$ be arbitrary and take $y\in B_\delta(x)\cap Gx$. Since $y$ is generic, there is $\delta'>0$ such that $B_{\delta'}(y)\cap Gy\subseteq Vy$. Since $y\in \ov{Vx}$, there is $v\in V$ such that $d(y,vx)<\delta'$, and as $y\in Gx$, we have $vx\in B_{\delta'}(y)\cap Gy$. Hence there is $v'\in V$ with $vx=v'y$, so $y\in V^{-1}Vx\subseteq Ux$. We conclude that $B_\delta(x)\cap Gx\subseteq Ux$, and that the action is UMT.
\end{proof}

From now on we will concentrate on transitive systems. If $G$ is a topological group, $G\actson X$ is a transitive action and $x\in X$ is any point, we can consider the quotient uniformity on $X$ induced by the orbit map $(G,\U_R)\to X$, $g\mapsto gx$. A basis of entourages is given by the sets $\{(gx,ugx):g\in G,u\in U\}$ where $U\in N_e(G)$ (see \cite[p.~128]{Pe-nbook}), which shows that the quotient uniformity does not depend on the choice of the point $x$. We call it the \emph{right uniformity} on $X$ and denote it by $\U_R^X$. If the action $G\actson X$ is continuous and $G$ and $X$ are Polish, then the right uniformity on $X$ is compatible. We may also remark that a bounded function $f\colon X\to\R$ is in $\RUC_G(X)$ precisely if it is uniformly continuous with respect to $\U_R^X$. Hence, the compact replica of $\U_R^X$ is the uniformity $\U_G$ defined in Remark~\ref{r:prox-delta-G-X}.

We recall that a map $\pi\colon (Y,\U_Y)\to (Z,\U_Z)$ between uniform spaces is \emph{uniformly open} if for every entourage $\eps\in \U_Y$ there is $\delta\in \U_Z$ such that $B_\delta(\pi(y))\subseteq \pi(B_\eps(y))$ for every $y\in Y$---where $B_\eps(y)=\{y'\in Y:(y,y')\in\eps\}$, and similarly for $B_\delta(z)$.

\begin{prop}\label{p:UMT-right-unif}
Let $G\actson X$ be a transitive, continuous action of a Polish group $G$ on a Polish metric space $(X,d)$. The following conditions are equivalent:
\begin{enumerate}
\item The action is UMT.
\item For any $x\in X$, the orbit map $(G,\U_R)\to (X,d),\ g\mapsto gx$ is uniformly open.
\item The right uniformity on $X$ is coarser than the uniformity of $d$.
\item Every $G$-equivariant compactification $\nu\colon X\to K$ is $d$-uniform.
\item $\RUC_G^u(X)=\RUC_G(X)$.
\end{enumerate}
\end{prop}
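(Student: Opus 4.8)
The plan is to prove the cycle $(1)\Rightarrow(2)\Rightarrow(3)\Rightarrow(5)\Rightarrow(4)$ together with $(4)\Rightarrow(1)$, where every step except $(5)\Rightarrow(1)$ (equivalently $(4)\Rightarrow(1)$) is essentially formal. For $(1)\Leftrightarrow(2)\Leftrightarrow(3)$ I would unwind the definitions. Transitivity gives $Gx=X$ for all $x$, so (UMT) says exactly that for each $U\in N_e$ there is $\de>0$ with $B_\de(x)\subseteq Ux$ for all $x$. A base of entourages of the right uniformity $\U_R^X$ is given by the sets $E_U=\{(gx,ugx):g\in G,\,u\in U\}$ (independent of the base point), and the inclusion $\{(x,y):d(x,y)<\de\}\subseteq E_U$ is literally ``$B_\de(x)\subseteq Ux$ for all $x$''; as the $E_U$ form a base of $\U_R^X$, this is precisely that $\U_R^X$ is coarser than $\U_d$, so $(1)\Leftrightarrow(3)$. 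For $(2)$: the basic $\U_R$-ball around $g$ is $Ug$, whose image under the orbit map $g\mapsto gx_0$ is $U(gx_0)$, so uniform openness of the orbit map says that for each $U$ there is $\de$ with $B_\de(gx_0)\subseteq U(gx_0)$ for all $g$, which by transitivity is again (UMT).

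For the soft middle, $(3)\Rightarrow(5)$ is immediate: if $\U_R^X\subseteq\U_d$ then every $\U_R^X$-uniformly continuous function, hence every $f\in\RUC_G(X)$, is $d$-uniformly continuous, while $\RUC^u_G(X)\subseteq\RUC_G(X)$ always holds. For $(5)\Rightarrow(4)$: any equivariant compactification $\nu\colon X\to K$ corresponds to a closed $G$-invariant subalgebra $\A\subseteq\RUC_G(X)$, and $\nu^*\U_K$ is generated by the sets $\{(x,y):|f_i(x)-f_i(y)|<\eps\}$ for finite $\{f_i\}\subseteq\A$; under $(5)$ each $f_i$ is $d$-uniformly continuous, so $\nu$ is $d$-uniform. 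Conversely, applying $(4)$ to $\beta_G$ — whose algebra is all of $\RUC_G(X)$ and whose induced uniformity is the precompact replica $\U_G$ of $\U_R^X$ — gives $\U_G\subseteq\U_d$; and since $\U_G$ is totally bounded it is generated by its bounded uniformly continuous functions, i.e.\ by $\RUC_G(X)$, so $\U_G\subseteq\U_d$ amounts to every member of $\RUC_G(X)$ being $d$-uniformly continuous, which is $(5)$. In particular $(4)\Leftrightarrow(5)$, and $(5)$ is equivalent to $\U_G\subseteq\U_d$.

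The hard step is $(5)\Rightarrow(1)$, and I expect it to be the main obstacle: the subtlety is that $(5)$ only controls the totally bounded replica $\U_G=(\U_R^X)^{\mathrm{pr}}$, whereas $(1)$ asks for the full uniformity $\U_R^X$, and bridging this gap is where Polishness (via Effros' theorem) is genuinely used. I would argue by contradiction. By the previous paragraph $\U_G\subseteq\U_d$, which, by the proximity criterion recalled before Remark~\ref{r:prox-delta-G-X}, means: if $A,B\subseteq X$ with $d(A,B)=0$, then $UA\cap B\neq\emptyset$ for every $U\in N_e$. Suppose $(1)$ fails: there is $U_0\in N_e$ such that $\varphi(x):=d(x,X\setminus U_0x)$ has infimum $0$ over $X$; note $\varphi(x)>0$ for every $x$, since the action is transitive and $G,X$ are Polish, so Effros' theorem makes $U_0x$ a neighbourhood of $x$. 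The goal is to produce $a_n,b_n\in X$ with $d(a_n,b_n)\to0$ but $b_m\notin U_0a_n$ for all $n,m$; then $A=\{a_n\}$, $B=\{b_n\}$ contradict the displayed consequence of $(5)$. The difficulty in selecting such points is that the failure of $(1)$ forces the conjugates $g_n^{-1}U_0g_n$ (with $a_n=g_nx_0$) to shrink — the non-SIN phenomenon of Example~\ref{ex:UMT}.1 — so neither translating all the $a_n$ to a common base point nor a naive inductive choice keeps the neighbourhoods $U_0a_n$ mutually disjoint. One must instead run a Baire-category argument in $G$, in the spirit of Ancel's upgrade of weak micro-transitivity to micro-transitivity (compare Lemma~\ref{l:generic-vs-weakly-generic}), extracting a uniform micro-transitivity modulus on a non-meager set of group elements and spreading it around via homogeneity while keeping the conjugation under control. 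This Baire-plus-homogeneity argument is the crux; it is the mechanism underlying Example~\ref{ex:UMT}.1 and generalizes the relationship between the equality $\RUC(G)=\UC(G)$ and the SIN property.
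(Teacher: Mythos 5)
Your handling of $(1)\Leftrightarrow(2)\Leftrightarrow(3)\Rightarrow(4)\Leftrightarrow(5)$ agrees with the paper's and is correct. The genuine gap is the closing implication $(5)\Rightarrow(1)$ (equivalently $(5)\Rightarrow(3)$): you correctly reduce $(5)$ to the statement that the precompact replica $\U_G$ of $\U_R^X$ is coarser than $\U_d$, but you then only sketch, and explicitly do not execute, a Baire-category-plus-homogeneity construction of sequences $a_n,b_n$ witnessing a proximity violation. As written this is not a proof. Even the reduction you aim for is incomplete: producing $A,B$ with $d(A,B)=0$ and $B\cap U_0A=\emptyset$ does not by itself refute $A\,\delta_G\,B$, since Remark~\ref{r:prox-delta-G-X} only records the implication from ``$\ov{UA}\cap\ov{UB}\neq\emptyset$ for all $U$'' to $A\,\delta_G\,B$; to conclude you would still have to exhibit an actual function in $\RUC_G(X)$ separating $A$ from $B$, or prove the converse of that remark.

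You have also misjudged where the difficulty lies. The paper closes the loop softly: $\U_d$ is metrizable, and by $(5)$ the precompact replica $\U_G$ of $\U_R^X$ is coarser than $\U_d$; the classical fact that a metrizable uniformity is finer than any other uniformity with a coarser precompact replica (see \cite[p.~27]{Isbell}) then gives $\U_R^X\subseteq\U_d$ at once, which is $(3)$. No appeal to Effros' theorem, Baire category, or control of conjugates is needed for this step. If you prefer your route, you must actually carry out the construction you allude to; as it stands, the crux of the proposition is missing from your argument.
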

\begin{proof}
The basic entourages of $(G,\U_R)$ are of the form $\eps_U=\{(g,ug):g\in G,u\in U\}$ for $U\in N_e$. Hence $(B_{\eps_U}(g))x=Ugx$, and we see that (2) is just a rephrasing of UMT, because the action is transitive. UMT can also be phrased as saying that for every $U\in N_e$ there is $\delta>0$ such that
$$\{(y,z)\in X^2:d(y,z)<\delta\}\subseteq \{(gx,ugx)\in X^2:g\in G,u\in U\},$$
 which is precisely (3). On the other hand, (3) implies that the $G$-compactifications of $X$ (which are always right uniformly continuous) are $d$-uniform, which in turn implies (5) because RUC functions factor through equivariant compactifications.

Finally, if $\RUC_G^u(X)=\RUC_G(X)$ then the compact replica $\U_G$ of $\U_R^X$ is coarser than the uniformity of $d$. On the other hand, a metrizable uniformity on a set $Z$ is finer than any other uniformity on $Z$ with a coarser precompact replica; see \cite[p.~27]{Isbell}. Hence the right uniformity on $X$ is coarser than the uniformity of $d$.
\end{proof}

We can deduce our claim about $\aleph_0$-categorical structures.

\begin{prop} \label{p:TupleOrbitCase}
Let $M$ be a separably categorical structure, $G$ its automorphism group, and let $a\in M^n$ be a tuple such that the orbit $Ga$ is closed. Then the action $G\actson Ga$ is uniformly micro-transitive.
\end{prop}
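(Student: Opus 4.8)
The plan is to reduce the claim to the equality $\RUC^u_G(Ga)=\RUC_G(Ga)$ by means of Proposition~\ref{p:UMT-right-unif}, and then to establish that equality by repeating, almost verbatim, the argument of Proposition~\ref{p:RUCu=RUC} with $M$ replaced by the closed invariant subset $Ga$.

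First I would observe that the hypotheses make Proposition~\ref{p:UMT-right-unif} applicable to the continuous isometric action $G\actson Ga$: the group $G=\Aut(M)$ is Polish, the orbit $Ga$ is closed in the Polish space $M^n$ and hence Polish, it carries the restriction of a $G$-invariant compatible metric, and the action is transitive. By the equivalence of conditions~(1) and~(5) there, uniform micro-transitivity of $G\actson Ga$ amounts to saying that every $f\in\RUC_G(Ga)$ is uniformly continuous for the metric of $Ga$. So suppose, for contradiction, that this fails for some $f\in\RUC_G(Ga)$: there are $\epsilon>0$ and $x_k,y_k\in Ga$ with $d(x_k,y_k)\to0$ and $|f(x_k)-f(y_k)|\geq\epsilon$. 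Choose $U\in N_e$ with $\sup_{z\in Ga}|f(gz)-f(z)|<\epsilon/4$ for $g\in U$, and write $U=\{g:d(gm,m)<\delta\}$ for a finite tuple $m\in M^k$ and $\delta>0$. Since $M^{n+k}{\sslash}G$ is compact (by $\aleph_0$-categoricity), pass to subsequences along which $\tp(x_km)$ and $\tp(y_km)$ both converge; as $d(x_k,y_k)\to0$, they converge to a common type $p$. Write $p=\tp(cm')$ with $c\in M^n$ and $m'\in M^k$ where, using $G$-invariance of types, we may arrange $d(m,m')<\delta/2$. The one place where the hypothesis enters: the projection of $p$ to the first $n$ coordinates equals $\lim_k\tp(x_k)=\tp(a)$, so $\tp(c)=\tp(a)$ and therefore $c\in\ov{Ga}=Ga$; in particular $f(c)$ is defined. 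From this point the proof of Proposition~\ref{p:RUCu=RUC} goes through word for word: picking $\eta<\delta/2$ with $|f(c')-f(c)|<\epsilon/4$ for $c'\in Ga$ with $d(c',c)<\eta$, and then $k$ with $d(\tp(x_km),p)<\eta$ and $d(\tp(y_km),p)<\eta$, one finds $g,h\in U$ with $gx_k,hy_k\in Ga$ both within $\eta$ of $c$, and then
$$|f(x_k)-f(y_k)|\leq|f(x_k)-f(gx_k)|+|f(gx_k)-f(c)|+|f(c)-f(hy_k)|+|f(hy_k)-f(y_k)|<\epsilon,$$
contradicting the choice of the sequences. Hence $\RUC^u_G(Ga)=\RUC_G(Ga)$, which yields the proposition.

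The only genuinely new ingredient compared with Proposition~\ref{p:RUCu=RUC} is the identification $c\in Ga$ of the realization of the limit type $p$, and this is exactly where closedness of $Ga$ is indispensable --- without it one would only obtain $c\in\ov{Ga}$, and the estimate above would collapse. This restriction is necessary rather than an artefact of the method: by Lemma~\ref{l:generic-vs-weakly-generic}, (weak) micro-transitivity of an isometric Polish action forces every orbit to be closed. I do not expect any further obstacle; the remaining points --- that $Ga$ is Polish, that basic neighbourhoods of $e$ in $\Aut(M)$ have the displayed form, and the bookkeeping in the triangle inequality --- are routine.
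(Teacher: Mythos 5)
Your proposal is correct and follows exactly the route of the paper, whose proof of this proposition is the one-line remark that it ``follows from Propositions~\ref{p:UMT-right-unif} and~\ref{p:RUCu=RUC}.'' You have simply (and correctly) filled in the detail the paper leaves implicit, namely that the argument of Proposition~\ref{p:RUCu=RUC} must be rerun with $Ga$ in place of $M$, the closedness of the orbit being used precisely to place the realization $c$ of the limit type inside $Ga$.
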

\begin{proof}
Follows from Propositions \ref{p:UMT-right-unif} and \ref{p:RUCu=RUC}.
\end{proof}

We realize \emph{a posteriori} that this proposition is essentially equivalent to a result of Ben Yaacov and Usvyatsov, which is phrased in purely model-theoretic terms: see Proposi\-tion~2.9 of \cite{BenUsv}.

In fact, a slightly stronger result holds, which can be seen as a uniform Effros' theorem for isometric actions of Polish Roelcke precompact groups.

\begin{thm} \label{t:UnifEffr} 
Let $G$ be a Polish Roelcke precompact group. Then every transitive, isometric, continuous action $G\actson X$ on a Polish metric space $(X,d)$ is uniformly micro-transitive.
\end{thm}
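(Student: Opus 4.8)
The plan is to reduce Theorem~\ref{t:UnifEffr} to the $\aleph_0$-categorical case already handled in Proposition~\ref{p:TupleOrbitCase}. The bridge is the structure-theoretic description of Roelcke precompact Polish groups recalled in Section~\ref{s:Models}: if $G$ is Polish and Roelcke precompact, then its left completion $L$, when regarded in an appropriate language, is a separably categorical metric structure with $G=\Aut(L)$, and the action $G\actson L$ is approximately oligomorphic. So the natural first move is to realize the given isometric $G$-space $X$ inside a structure on which Proposition~\ref{p:TupleOrbitCase} (equivalently Proposition~\ref{p:RUCu=RUC} via Proposition~\ref{p:UMT-right-unif}) applies.

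First I would fix a point $a\in X$; by transitivity $X=Ga$, and since the action is continuous with $G$, $X$ Polish, the orbit map $\pi\colon G\to X$, $g\mapsto ga$, is a continuous open surjection (Effros' theorem), so $X$ carries the quotient right uniformity $\U_R^X$ as described before Proposition~\ref{p:UMT-right-unif}. The metric $d$ on $X$ is compatible and $G$-invariant, hence the function $d(\cdot,\cdot)\colon X\times X\to\R$ is $G$-invariant and continuous. Second I would expand $L$ by a single new sort interpreted as $X$, together with the predicate $d$ and the $G$-action datum tying the two sorts: concretely, one can add to the language of $L$ a new metric sort $S$ with a function symbol $S\times S\to\R$ interpreted by $d$, and—to record the $G$-action—pick a point $a_0\in L$ with $\mathrm{St}(a_0)=\mathrm{St}(a)$ (possible because $X\cong G/\mathrm{St}(a)$ and stabilizers of points of $L$ can be taken to exhaust a neighborhood basis of open subgroups; this is where Roelcke precompactness is used) and a $G$-equivariant isometric identification of $S$ with the orbit of $a_0$ inside a power of $L$. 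The point is that $X$ becomes (isomorphic to) a closed orbit $Gb$ inside $M^n$ for a separably categorical structure $M$, with $d$ the restriction of the ambient metric.

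Once $X$ is identified with such a closed orbit, Proposition~\ref{p:TupleOrbitCase} gives directly that $G\actson X$ is uniformly micro-transitive, and we are done. Alternatively, and perhaps more cleanly, I would avoid the explicit expansion and argue via the equivalence (1)$\Leftrightarrow$(5) of Proposition~\ref{p:UMT-right-unif}: it suffices to show $\RUC^u_G(X)=\RUC_G(X)$. For this, observe that any $f\in\RUC_G(X)$ pulls back to $f\circ\pi\in\RUC(G)$; since $G$ is Roelcke precompact and the action is isometric, the argument of Proposition~\ref{p:RUCu=RUC} (which only used $\aleph_0$-categoricity through the compactness of the type spaces $M^{1+k}\sslash G$, equivalently the Roelcke precompactness of $G$ and the continuity of the orbit map) goes through verbatim with $X$ in place of $M$: given $\epsilon$ and sequences $a_n,b_n$ with $d(a_n,b_n)\to0$ and $|f(a_n)-f(b_n)|\ge\epsilon$, one uses the basic neighborhood $U=\{g:d(gm,m)<\delta\}$ for a finite tuple $m$ from $X$, passes to convergent subsequences of the orbits $\ov{G(a_nm)}$ and $\ov{G(b_nm)}$ in the compact quotient $X^{1+|m|}\sslash G$—compact precisely because $G$ is Roelcke precompact—and derives a contradiction exactly as there.

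The main obstacle I anticipate is the bookkeeping in the second step: making precise that $X$ can be realized as a closed orbit inside a separably categorical structure in a way that is both $G$-equivariant and isometry-preserving, and in particular checking that the resulting structure is still separably categorical (one needs the new sort to add no new orbits beyond finitely many in each power, i.e. the expansion must stay approximately oligomorphic—which holds because $X=G/\mathrm{St}(a)$ and stabilizers in a Roelcke precompact group are themselves Roelcke precompact on the relevant quotients). If one takes the alternative route via $\RUC^u_G=\RUC_G$, the obstacle shifts to verifying that the proof of Proposition~\ref{p:RUCu=RUC} genuinely only uses Roelcke precompactness of $G$ plus continuity/openness of the orbit map, and in particular that the type-space compactness $X^n\sslash G$ is available without $X$ itself being a model—this is exactly the content of "approximately oligomorphic'' actions, so it should be routine, but it must be stated carefully.
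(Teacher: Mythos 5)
Your second route is sound and genuinely different from the paper's proof. The paper gives a direct, self-contained topological argument: assuming UMT fails, it produces $y_n=g_nx$, $z_n=h_nx$ with $d(y_n,z_n)\to 0$ but $y_n\notin Uz_n$, uses compactness of $L^3{\sslash}G$ (where $L$ is the left completion) to move the triples $(g_n,h_n,e)$ to a convergent configuration $(\xi,\zeta,\chi)$ with $\xi x=\zeta x$, and then applies the classical Effros theorem at the point $\xi x$ to manufacture a small $u\in U$ with $y_n=uz_n$, a contradiction. Your route instead verifies condition (5) of Proposition~\ref{p:UMT-right-unif} by rerunning the proof of Proposition~\ref{p:RUCu=RUC} with $X$ in place of $M$. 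This works, and the point you flag as needing care is exactly the right one: you must check that $X^{1+k}{\sslash}G$ is compact and that every point of it is a realized closed orbit. Both facts follow because the orbit map $L\to X$, $\xi\mapsto\xi x$, is a uniformly continuous $G$-equivariant surjection (here isometry of the action is used), so $X^n{\sslash}G$ is a continuous image of the compact $L^n{\sslash}G$, while completeness of $X$ gives realizability of limit types. Both proofs ultimately rest on approximate oligomorphy of $G\actson L$; the paper's avoids the detour through $\RUC$ algebras, yours reuses more of the machinery already established in Section~\ref{s:Models}.

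Your first route, by contrast, has a genuine gap as stated. You propose to find $a_0\in L$ with $\operatorname{St}(a_0)=\operatorname{St}(a)$, justified by the claim that stabilizers of points of $L$ exhaust a neighborhood basis of open subgroups. This fails in general: a Roelcke precompact Polish group can be connected (e.g.\ $U(\ell^2)$), in which case it has no proper open subgroups at all, and the stabilizer $\operatorname{St}(a)$ of a point of $X$ is merely a closed subgroup that need not arise as the stabilizer of any point, or finite tuple of points, of $L$ (for $G=U(\ell^2)$ and $X=S_{\ell^2}$, the stabilizer of any $\sigma\in L$ fixes an infinite-dimensional subspace pointwise, whereas $\operatorname{St}(a)$ acts freely only on the line through $a$). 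Realizing $X=G/\operatorname{St}(a)$ inside an $\aleph_0$-categorical structure genuinely requires \emph{metric imaginaries} (quotients of $L^\omega$ by definable pseudometrics); this is precisely the reduction to Proposition~\ref{p:TupleOrbitCase} that the paper mentions and then deliberately sidesteps in favour of the self-contained argument. Even granting a matching stabilizer, one would still have to arrange that the identification of $X$ with the corresponding orbit is isometric and that the expanded two-sorted structure remains approximately oligomorphic, so this route needs substantially more bookkeeping than your sketch provides.
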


This can be deduced from Proposition \ref{p:TupleOrbitCase} by seeing the space $X$ as a metric imaginary sort of an $\aleph_0$-categorical structure $M$ such that $G=\Aut(M)$. Nevertheless, we provide a self-contained topological argument.

\begin{proof}
Let $d_L$ be a compatible left-invariant metric on $G$ and let $L$ be the completion of $G$ with respect to $d_L$. We will use the fact that if $G$ is PRP, then the action $G\actson L$ is approximately oligomorphic (see the discussion and references in Section~\ref{s:Models}).

Let $G\actson X$ be as in the statement and let us fix a point $x\in X$. Since the action is isometric, the orbit map $G\to X$, $g\mapsto gx$ is left uniformly continuous and hence extends continuously to the map $L\to X$, $\xi\mapsto \xi x$.

If the action is not UMT, there are $U\in N_e$ and $y_n,z_n\in X$ such that $d(y_n,z_n)\to 0$ but $y_n\notin Uz_n$ for all $n$. We may assume $U$ is of the form $U=\{g\in G:d_L(g,e)<\epsilon\}$ for some $\epsilon>0$. Also, we can write $y_n=g_nx$ and $z_n=h_nx$ for some $g_n,h_n\in G$.

Now, since the action $G\actson L$ is approximately oligomorphic, the quotient space $L^3{\sslash}G$ is compact. Thus by considering the sequence $(g_n,h_n,e)\in L^3$, we see that there are $f_n\in G$ and $\xi,\zeta,\chi\in L$ such that
$$(f_ng_n,f_nh_n,f_n)\to (\xi,\zeta,\chi).$$
It follows that $f_ny_n\to \xi x$ and $f_nz_n\to \zeta x$. Since $d(y_n,z_n)\to 0$ and the metric is $G$-invariant, we have $\xi x=\zeta x$.

Let $V=\{g\in G:d_L(g\chi,\chi)<\epsilon/4\}$, which is open. By Effros' theorem, there is $\delta>0$ such that $B_\delta(\xi x)\subseteq V\xi x$. Now let $n$ be large enough that $f_ny_n\in B_\delta(\xi x)$, $f_nz_n\in B_\delta(\xi x)$ and $d_L(f_n,\chi)<\epsilon/4$. Hence there are $v,w\in V$ such that $f_ny_n=v\xi x$ and $f_nz_n=w\xi x$. Letting $u=f_n^{-1}vw^{-1}f_n$, we have $y_n=uz_n$ and
\begin{align*}
d_L(u,e) & = d_L(vw^{-1}f_n,f_n) \leq d_L(vw^{-1}f_n,vw^{-1}\chi) + d_L(vw^{-1}\chi,\chi) + d_L(f_n,\chi)\\
& \leq 2d_L(f_n,\chi) + d_L(vw^{-1}\chi,v\chi) + d_L(v\chi,\chi)
<\epsilon.
\end{align*}
Hence $y_n\in Uz_n$, a contradiction.
\end{proof} 

For more about transitive isometric actions of Polish Roelcke precompact groups, see Ben Yaacov's article \cite[\textsection 5]{BY-transitive}.

Two concrete cases of the previous results are given by the Examples~\ref{ex:UMT}.2 and \ref{ex:UMT}.3 discussed above. In fact, all examples that we know of $\aleph_0$-categorical structures $M$ on which the automorphism group $G$ acts transitively have the stronger property that the action $(G,d_u)\actson M$ is micro-transitive, where $(G,d_u)$ denotes the group $G$ endowed with the (bi-invariant) metric $d_u$ of uniform convergence: $d_u(g,h)=\sup_{x\in M}d(gx,hx)$. Note that $d_u$ induces the coarsest (up to uniform equivalence) bi-invariant metric on $G$ that refines the topology of pointwise convergence (see \cite[\textsection 2]{BenBerMel}). The following problem, raised by T. Tsankov, is also closely related to the questions considered by I. Ben Yaacov in \cite[\textsection 4]{BY-transitive}.

\begin{question}[Tsankov]\ 

Let $M$ be an $\aleph_0$-categorical structure such that the action $G\actson M$ of its automorphism group is transitive. Is the action $(G,d_u)\actson M$ necessarily micro-transitive?
\end{question}

\noindent\hrulefill


\sk 

\bibliographystyle{amsplain}

\end{document}